\newtheorem{theorem}{Theorem}[section]
\newtheorem{lemma}[theorem]{Lemma}
\theoremstyle{definition}
\newtheorem{definition}{Definition}[section]
\newtheorem{example}[theorem]{Example}
\theoremstyle{remark}
\newtheorem{remark}{Remark}[section]
\numberwithin{equation}{section}
\newcommand{\rmnum}[1]{\romannumeral #1}
\newcommand{\Rmnum}[1]{\expandafter\@slowromancap\romannumeral #1@}
\begin{document}

\title{On the measure-theoretic entropy and topological pressure of free semigroup actions(to appear in ETDS)}

\author{Xiaogang Lin}
\address{School of Business Administration, South China University of Technology,
Guangzhou 510641, P.R. China}
\email{XGL1010@foxmail.com}

\author{Dongkui Ma*}
\thanks{* Corresponding author}
\address{School of Mathematics, South China University of Technology,
Guangzhou 510641, P.R. China}
\email{dkma@scut.edu.cn}

\author{Yupan Wang}
\address{School of Computer Science and Engineering, South China University of Technology,
Guangzhou 510641, P.R. China}

\subjclass[2000]{37A35, 37B40, 37D35}



\keywords{Topological pressure, Entropy, Free semigroup of actions, Skew-product transformations, Partial variational principle}

\begin{abstract}
In this paper, we introduce the notions of topological pressure and measure-theoretic entropy for a free semigroup action. Suppose that a free semigroup acts on a compact metric space by continuous self-maps. To this action,  we assign a skew-product transformation whose fiber topological pressure is taken to be the topological pressure of the initial action. Some properties of these two notions are given, and then we give two main results. One is the relationship between the topological pressure of the skew-product transformation and the topological pressure of the free semigroup action, the other is the partial variational principle about the topological pressure. Moreover, we apply this partial variational principle to study the measure-theoretic entropy and the topological entropy of finite affine transformations on a metrizable group.
\end{abstract}

\maketitle

\section{ Introduction }

In 1959, Kolmogorov and Sinai developed the metric entropy from Shannon's information theory into ergodic theory. Since then, the notion of entropy has played a vital role in the study of dynamic systems. In 1967, Kirillov\cite{KIR} introduced the notion of entropy for the action of finitely generated groups of measure-preserving transformations.  The case of Abelian group actions was studied by Conze\cite{CONZE} and also by Katznelson and Weiss\cite{KAT}. The notion of topological entropy, which was introduced by Adler, Konheim and McAndrew\cite{AKM} as an invariant of topological conjugacy, describes the complexity of a system. Later, Bowen\cite{Bowen} and Dinaburg\cite{Dinaburg} provided an equivalent definition when the space is  metrizable. Since the entropy appeared to be a very useful invariant in ergodic theory and dynamical systems, there were several attempts to find its suitable generalizations for other systems such as groups, pseudogroups, graphs, foliations, nonautonomous dynamical systems and so on(\cite{BIS1,BIS3,BIS4,BIS2,BUfE,FRI,GLW,KOL,LSHMA,MAWU,WMA}). Bi\'{s}\cite{BIS1} and Bufetov\cite{BUfE} introduced the notion of the topological entropy of free semigroup actions. Related studies include\cite{BIS2, CARVALHO,LSHMA,MAWU,RODRIGUES,TANG,WMA,WMAL}.

As a natural extension of topological entropy, topological pressure is a rich source of dynamic systems. Ruelle\cite{RUE} first introduced the concept of topological pressure of additive potentials for expansive dynamic systems. Walters\cite{WALTER1} then extended this concept to a compact space with the continuous transformation. Since  topological pressure appeared to be useful in ergodic theory and dynamic systems, there were several attempts to find its suitable generalizations for other systems(see, for example, \cite{ BAR,BIS3,CAO,CHUNG,HUANG,LSHMA,MAWU,PESIN,THOM,ZHANG}).

In general, it's difficult to study the properties of dynamical systems for arbitrary group actions, so one needs to find some special groups which meet certain conditions, e.g., amenable group, sofic group, etc. In this paper, we assume that a semigroup is free because the topological pressure(entropy) for a free semigroup action can associate with the topological pressure(entropy) for the skew-product dynamical system, and thus we can study the skew-product dynamical system based on the properties of the dynamical system for a free semigroup action.

In this paper, we introduce the notions of the topological pressure and the measure-theoretic entropy for a free semigroup action and give some properties of these two notions.

Let $X$ be a compact metric space, $f_0,\cdots,f_{m-1}$ continuous self-maps acting on $X$ and $F:\Sigma_m \times X\rightarrow \Sigma_m \times X$  a skew-product transformation, where $\Sigma_m$ denotes the two-side symbol space. Let $\varphi \in C(X,\mathbb{R})$ and $g =c+\varphi$, where $c$ is a constant. $P(F,g)$ denotes the topological pressure of $F$ with respect to $g$ for the dynamical system $(\Sigma_m \times X,F)$, $P(f_0,\cdots,f_{m-1},\varphi)$ the topological pressure of $f_0,\cdots,f_{m-1}$ with respect to $\varphi$ and $h(f_0,\cdots,f_{m-1})$  the topological entropy of $f_0,\cdots,f_{m-1}$, and $h_{\mu}(f_0,\ldots,f_{m-1})$  the measure-theoretic entropy of  $f_0,\cdots,f_{m-1}$ with respect to the measure $\mu$ for the free semigroup action. $ \mathscr{M}(X,f_0,\ldots,f_{m-1})$ denotes the set of invariant measures of $f_0,\cdots,f_{m-1}$.

The main results of this paper are the following two theorems.
\begin{theorem}\label{thm:properties of pressure3}
The topological pressure of the transformation $F$ with respect to $g$, satisfies \[P(F,g)=c+\log m+P(f_0,\cdots,f_{m-1},\varphi).\]
\end{theorem}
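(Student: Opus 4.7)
The plan is to expand $P(F,g)$ from its Bowen definition via $(n,\epsilon)$-separated sets on $\Sigma_m\times X$ and to identify the resulting partition function, after pulling out the obvious $e^{nc}$ and $m^n$ factors, with the one that the paper uses to define $P(f_0,\cdots,f_{m-1},\varphi)$.

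First I would fix a compatible product metric on $\Sigma_m\times X$, for instance $d'((\omega,x),(\omega',x'))=\max\{d_{\Sigma_m}(\omega,\omega'),d_X(x,x')\}$. Iterating the skew-product gives
\[F^k(\omega,x) = \bigl(\sigma^k\omega,\,f_{\omega_{k-1}}\circ\cdots\circ f_{\omega_0}(x)\bigr),\]
and since $g(\omega,x)=c+\varphi(x)$, the Birkhoff sum splits cleanly:
\[\sum_{k=0}^{n-1} g\bigl(F^k(\omega,x)\bigr) = nc + \sum_{k=0}^{n-1}\varphi\bigl(f_{\omega_{k-1}}\circ\cdots\circ f_{\omega_0}(x)\bigr).\]
The second sum on the right is precisely the Birkhoff-type sum along the branch $\omega$ used to build $P(f_0,\cdots,f_{m-1},\varphi)$.

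Next I would compare $(n,\epsilon)$-separated sets. For $\epsilon$ small enough one has $d_{\Sigma_m}(\sigma^k\omega,\sigma^k\omega')<\epsilon$ for every $0\le k<n$ if and only if $\omega,\omega'$ agree on their first $n$ coordinates; hence a maximal $(n,\epsilon,F)$-separated subset of $\Sigma_m\times X$ can be organised into exactly $m^n$ non-empty fibres indexed by length-$n$ words, and on each such fibre the $x$-coordinates form an $(n,\epsilon)$-separated set with respect to the Bowen metric built from the composition $f_{\omega_{k-1}}\circ\cdots\circ f_{\omega_0}$. Consequently the partition function for $F$ factors as
\[Q_n(F,g,\epsilon) \;=\; e^{nc}\sum_{|\omega|=n} Q_n^{\omega}(\varphi,\epsilon),\]
where $Q_n^{\omega}(\varphi,\epsilon)$ is the fibrewise partition function underlying $P(f_0,\cdots,f_{m-1},\varphi)$.

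Taking $\tfrac{1}{n}\log$, letting $n\to\infty$ and then $\epsilon\to 0^+$, the $e^{nc}$ factor contributes $c$, summing over the $m^n$ words contributes $\log m$ once the author's definition of $P(f_0,\cdots,f_{m-1},\varphi)$ is recognized as the exponential growth rate of the word-average of $Q_n^{\omega}$, and the fibrewise pieces contribute $P(f_0,\cdots,f_{m-1},\varphi)$, yielding the asserted equality. The main obstacle I anticipate is the bookkeeping needed to match the global Bowen metric on $\Sigma_m\times X$ with the fibrewise Bowen metrics on $X$ at the level of separated-set cardinalities: one must verify that for all sufficiently small $\epsilon$ the two notions of separation coincide, so that the three contributions decouple cleanly in the double limit. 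Once that is in place, the identity is essentially an algebraic manipulation of logarithms.
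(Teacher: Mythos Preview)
Your approach is the paper's, but the claimed \emph{exact} factorisation of the partition function is not correct, and the discrepancy is more than bookkeeping. For the two-sided shift with the metric $d(\omega,\omega')=2^{-k}$, $k=\inf\{|j|:\omega_j\ne\omega'_j\}$, the condition $\max_{0\le j<n}d(\sigma_m^{\,j}\omega,\sigma_m^{\,j}\omega')<\varepsilon$ is equivalent to agreement of $\omega,\omega'$ on a window $[-C(\varepsilon),\,n-1+C(\varepsilon)]$ (with $C(\varepsilon)\to\infty$ as $\varepsilon\to 0$), not merely on $[0,n-1]$ as you assert. Hence for small $\varepsilon$ there are $m^{\,n+2C(\varepsilon)}$, not $m^n$, symbolic classes, and an identity of the form $e^{nc}\sum_{|w|=n}(\cdot)$ does not hold for either the separated-set or the spanning-set partition function.

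The paper handles this by proving two one-sided estimates rather than an equality: a lower bound
\[P_n(F,g,\varepsilon)\ \ge\ e^{nc}\,m^n\,P_n(f_0,\ldots,f_{m-1},\varphi,\varepsilon)\]
via separated sets (pick one $\omega$ per length-$n$ word; disagreement at any coordinate in $[0,n-1]$ already forces $(n,\varepsilon)$-separation in $\Sigma_m$), and an upper bound
\[Q_n(F,g,\varepsilon)\ \le\ K(\varepsilon)\,e^{nc}\,m^n\,Q_n(f_0,\ldots,f_{m-1},\varphi,\varepsilon)\]
via spanning sets, where the constant $K(\varepsilon)=m^{2C(\varepsilon)}$ absorbs precisely the extra $2C(\varepsilon)$ coordinates and vanishes after taking $\frac{1}{n}\log$ and $n\to\infty$. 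Once you replace your exact identity by this pair of inequalities, the rest of your outline (pulling out $c$, recognising $\log m$ from the word count, and identifying the fibrewise term with $P(f_0,\ldots,f_{m-1},\varphi)$) goes through exactly as you describe.
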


\begin{remark}
If $c=0$, then $P(F,g)=\log m+P(f_0,\cdots,f_{m-1},\varphi)$. If $g=\varphi\equiv 0$, then \[h(F)=\log m+h(f_0,\cdots,f_{m-1})\] which has been proved by Bufetov\cite{BUfE}.
\end{remark}

\begin{theorem}\label{thm:relation1}
(Partial variational principle)Let $(X,d)$ be a compact metric space and let $f_i:X\rightarrow X$(i=0,\ldots,m-1) be  finite continuous maps such that $\mathscr{M}(X,f_0,\ldots,f_{m-1})\neq \emptyset$ and let $\varphi \in C(X,\mathbb{R})$. Then

 \[\sup \left\{h_{\mu}(f_0,\ldots,f_{m-1})+\int \varphi d\mu \Big| \mu \in \mathscr{M}(X,f_0,\ldots,f_{m-1})\right\} \leq P(f_0,\ldots,f_{m-1},\varphi).\] In particular, when $\varphi=0$, we have \[\sup\left\{h_{\mu}(f_0,\ldots,f_{m-1}) | \mu \in \mathscr{M}(X,f_0,\ldots,f_{m-1})\right\} \leq h(f_0,\ldots,f_{m-1}).\]
\end{theorem}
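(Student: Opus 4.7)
The plan is to lift each free-semigroup invariant measure to an $F$-invariant measure on $\Sigma_m \times X$, apply the classical variational principle for the single transformation $F$, and then use Theorem \ref{thm:properties of pressure3} to translate the resulting inequality back to the free semigroup action. Fix $\mu \in \mathscr{M}(X,f_0,\ldots,f_{m-1})$, let $\mathbb{P}$ denote the Bernoulli measure on $\Sigma_m$ with uniform weights $(1/m,\ldots,1/m)$, and set $\nu := \mathbb{P} \times \mu$. The first step is to verify that $\nu$ is $F$-invariant; this should follow essentially from the definition of an invariant measure for the free semigroup action together with the $\sigma$-invariance of $\mathbb{P}$.

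Next, two identities are needed. Since $g = c + \varphi$ and $\varphi$ depends only on the $X$-coordinate, $\int g\, d\nu = c + \int \varphi\, d\mu$. And by an Abramov--Rokhlin-type decomposition for the skew product, $h_\nu(F)$ splits as the base entropy $h_{\mathbb{P}}(\sigma) = \log m$ plus a fibrewise contribution that should, by design of the definition introduced earlier in the paper, equal $h_\mu(f_0,\ldots,f_{m-1})$. Combining these identities with the single-transformation variational principle $h_\nu(F) + \int g\, d\nu \le P(F,g)$ gives
\[
\log m + h_\mu(f_0,\ldots,f_{m-1}) + c + \int \varphi\, d\mu \le P(F,g).
\]
Theorem \ref{thm:properties of pressure3} identifies $P(F,g) = c + \log m + P(f_0,\ldots,f_{m-1},\varphi)$, so the constants $c$ and $\log m$ cancel on both sides, yielding $h_\mu(f_0,\ldots,f_{m-1}) + \int \varphi\, d\mu \le P(f_0,\ldots,f_{m-1},\varphi)$. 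Taking the supremum over $\mu$ finishes the main assertion; the entropy statement follows on setting $\varphi \equiv 0$.

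The hard part, I expect, is the identity $h_\nu(F) = \log m + h_\mu(f_0,\ldots,f_{m-1})$. Reconciling the Abramov--Rokhlin conditional entropy with whichever definition of $h_\mu(f_0,\ldots,f_{m-1})$ is adopted earlier in the paper requires decomposing the $\sigma$-algebra pulled back from $\Sigma_m$ into cylinders $[i_0\cdots i_{n-1}] \times X$, expressing the conditional entropies on these cylinders in terms of partitions refined under the compositions $f_{i_{n-1}}\circ\cdots\circ f_{i_0}$, and averaging against $\mathbb{P}$ to match the free-semigroup definition. Checking $F$-invariance of $\nu = \mathbb{P}\times\mu$ is a similar bookkeeping step whose form depends on how invariance for the free semigroup was formulated. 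Once these technical points are in place, the rest of the argument is a routine combination of the variational principle for the single map $F$ and Theorem \ref{thm:properties of pressure3}.
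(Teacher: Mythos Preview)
Your approach is sound but takes a genuinely different route from the paper. The paper never lifts to the skew product for this theorem; instead it argues entirely on $X$. First (Lemma~\ref{lem:relation2}) it runs a Misiurewicz-style argument directly for the free-semigroup entropy: given $\mu$ and a partition $\xi$, it approximates $\xi$ by a partition $\eta$ with compact atoms, bounds $H_\mu(\bigvee_{w'\le w} f_{w'}^{-1}\eta)+\int S_w\varphi\,d\mu$ by $\log\sum_{y\in E_w} e^{(S_w\varphi)(y)}$ for a $(w,\delta)$-separated set $E_w$ (picking up a factor $2^n$ because a $\delta$-ball can touch two atoms of $\eta$), and averages over $|w|=n$ using the arithmetic--geometric mean inequality. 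This yields the inequality with a parasitic $+\log 2$. The paper then removes the $\log 2$ by a product trick: apply the weak inequality to $G\times\cdots\times G$ acting on $X^{2^n}$ with measure $\mu^{\otimes 2^n}$, use Theorems~\ref{prop:properties of pressure2} and~\ref{prop:entropy3} to see that both sides scale by $2^{n+1}$ while the $\log 2$ does not, and let $n\to\infty$.

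Your skew-product argument is more conceptual and bypasses the $\log 2$ entirely, at the cost of invoking outside machinery (the classical variational principle and the Abramov--Rokhlin formula). The key identity $h_{\mathbb P\times\mu}(F)=\log m + h_\mu(f_0,\ldots,f_{m-1})$ is correct and does match the paper's Definition~\ref{def:entropy1}: for the uniform Bernoulli $\mathbb P$ the fibre entropy $\int H_\mu\bigl(\bigvee_{j=0}^{n-1}(f_{\omega_{j-1}}\!\cdots f_{\omega_0})^{-1}\xi\bigr)d\mathbb P(\omega)$ is exactly $m^{-n}\sum_{|w|=n}H_\mu(\bigvee_{w'\le w}f_{w'}^{-1}\xi)$ after the reversal bijection $w\mapsto\overline w$. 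You should still state carefully which version of Abramov--Rokhlin you are citing (the random-dynamical-systems form, e.g.\ Bogensch\"utz--Crauel or Kifer) and check that for the product measure $\nu=\mathbb P\times\mu$ the supremum over partitions of $\Sigma_m\times X$ reduces to partitions of the form $\Sigma_m\times\xi$; both points are standard but not entirely trivial. The $F$-invariance of $\mathbb P\times\mu$ is immediate from the paper's definition of $\mathscr M(X,f_0,\ldots,f_{m-1})$ (each $f_i$ preserves $\mu$), so that step is fine.
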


This paper is organized as follows. In section 2, we give some preliminaries. In section 3, we introduce the notion of the topological pressure for a free semigroup action on a compact metric space and give some fundamental properties. In section 4, we give the definition and some properties of the measure-theoretic entropy for a free semigroup action on a probability space. In section 5, we prove our main results. In section 6, we apply Theorem \ref{thm:relation1} to study the measure-theoretic entropy and the topological entropy for finite affine transformations.

\section{preliminaries}

\subsection{Words and sequences}

Before studying the topological pressure and the measure-theoretic entropy for a free semigroup action, we introduce some notations. Denote by $F_m^{+}$ the set of all finite words of symbols $0,1,\ldots,m-1$. For any $w \in F_m^{+}$, $ |w|$ stands for the length of $w$, that is, the number of symbols in $w$. Obviously, $F_m^{+}$ with respect to this law of composition is a free semigroup with $m$ generators. We write $w\leq w'$ if there exists a word $w'' \in F_m^{+}$ such that $w'=w''w$.

Denote by $\Sigma_m$ the set of all two-side infinite sequences of symbols $0,1,\ldots,m-1$, i.e., \[ \Sigma_m=\{\omega=(\ldots, \omega_{-1}, \omega_0, \omega_1, \ldots) | \omega_i=0,1,\ldots,m-1~for~all~integer~i\}.\]

A metric on $\Sigma_m$ is introduced by  \[d(\omega,\omega')=1/2^k,~where~k=\inf\{|n|:\omega_n\neq\omega'_n\}.\]

Obviously, $\Sigma_m$ is compact with respect to this metric. Recall that the Bernoulli shift $\sigma_m: \Sigma_m \rightarrow \Sigma_m$ is a homeomorphism of $\Sigma_m$ given by the formula: \[(\sigma_m\omega)_i=\omega_{i+1}.\]

Let $\omega \in \Sigma_m,w\in F_m^{+}$, $a,b$ integers, and $a\leq b$. We write $\omega|_{[a,b]}=w$ if $w=\omega_a\omega_{a+1}\ldots\omega_{b-1}\omega_b$.

\subsection{Topological entropy for a free semigroup action}\label{def:topological}

In this section, we recall the topological entropy for a free semigroup action on a compact metric space. Our presentation follows Bufetov\cite{BUfE}.

Let $X$ be a compact metric space with metric $d$. Suppose that a free semigroup with $m$ generators acts on $X$; denote the maps corresponding to the generators by $f_0,f_1,\ldots,f_{m-1}$; we assume that these maps are continuous.

Let $w \in F_m^{+}, w=w_1w_2\ldots w_k$, where $w_i=0,1,\ldots,m-1$ for all $i=1,\ldots,k$. Let $f_w=f_{w_1}f_{w_2}\ldots f_{w_k}$,  $f_w^{-1}=f_{w_k}^{-1}f_{w_{k-1}}^{-1}\ldots f_{w_1}^{-1}$.  Obviously, $f_{ww'}=f_wf_{w'}$.

To each $w \in F_m^{+}$, a new metric $d_w$ on $X$ (named Bowen metric) is given by \[d_w(x_1,x_2)=\max_{w' \leq w} d(f_{w'}(x_1),f_{w'}(x_2)).\]

Clearly, if $w \leq w'$, then $d_w(x_1,x_2) \leq d_{w'}(x_1,x_2)$ for all $x_1,x_2 \in X$.

Let $\varepsilon>0$, a subset $E$ of $X$ is said to be a $(w,\varepsilon,f_0,\ldots,f_{m-1})$-spanning subset if, for $\forall x\in X, \exists y \in E$ with $d_w(x,y)<\varepsilon$. The minimal cardinality of a $(w,\varepsilon,f_0,\ldots,f_{m-1})$-spanning subset of $X$ is denoted by $B(w,\varepsilon,f_0,\ldots,f_{m-1})$.

Let $\varepsilon>0$, a subset $F$ of $X$ is said to be $(w,\varepsilon,f_0,\ldots,f_{m-1})$-separated subset if, for any $ x_1,x_2 \in F, x_1\neq x_2$, $d_w(x_1,x_2) \geq \varepsilon$. The maximal cardinality of any $(w,\varepsilon,f_0,\ldots,f_{m-1})$-separated subsets of $X$ is denoted by $N(w,\varepsilon,f_0,\ldots,f_{m-1})$.

Let \[B(n,\varepsilon,f_0,\ldots,f_{m-1})= \frac{1}{m^n}\sum_{|w|=n}B(w,\varepsilon,f_0,\ldots,f_{m-1}),\]
\[N(n,\varepsilon,f_0,\ldots,f_{m-1})= \frac{1}{m^n}\sum_{|w|=n}N(w,\varepsilon,f_0,\ldots,f_{m-1}).\]

The topological entropy of a free semigroup action is defined by the formula \[h(f_0,\ldots,f_{m-1})=\lim_{\varepsilon \rightarrow 0}(\limsup_{n \rightarrow \infty})\frac{1}{n} \log B(n,\varepsilon,f_0,\ldots,f_{m-1}). \]

It follows that \[h(f_0,\cdots,f_{m-1})=\lim_{\varepsilon \rightarrow 0}(\limsup_{n \rightarrow \infty})\frac{1}{n} \log N(n,\varepsilon,f_0,\cdots,f_{m-1}). \]

We introduce the definition of topological entropy in the sense of Bi\'{s}\cite{BIS1} and give the relationship between it and the topological entropy in the sense of Bufetov. Let $G_1=\{id_X, f_0,\cdots,f_{m-1}\}, G=\cup_{n \in \mathbb{N}}G_n$, where $G_n=\{g_1\circ\cdots\circ g_n:g_1,\cdots,g_n\in G_1\}$. We say that two point $x,y\in X$ are $(n,\varepsilon)$-separated by $G$ if there exists $g\in G_n$ such that $d(g(x),g(y))\geq \varepsilon$. A set $A$ of X is called a $(n,\varepsilon)$-separated set in the sense of Bi\'{s} if any two distinct points of $A$ have this property. Let
\[s(n,\varepsilon,X)=\max\{card(E):E~~is~~a~~(n,\varepsilon)-seperated~~subset~~of~~X\}.\]
In \cite{BIS1}, Bi\'{s} defined the topological entropy as
\[h(G,G_1,X)=\lim_{\varepsilon\rightarrow0}\limsup_{n\rightarrow\infty}\frac{1}{n}\log(s(n,\varepsilon,X)).\]

\begin{remark}
(1) It is easy to see that $h(f_0,\cdots,f_{m-1})\leq h(G,G_1,X)$.

(2) Observe that if $m=1$, this definition coincides with Bowen's definition for the dynamical system $(X,f)$\cite{Bowen,WALTER2}.
\end{remark}

\section{Topological pressure for a free semigroup action}\label{sec:pressureDefinition}
In this section, the topological pressure  for a free semigroup action is introduced. Some properties are also given.

Let $X$ be a compact metric space with metric $d$. Suppose that a free semigroup with $m$ generators act on $X$; denote the maps corresponding to the generators by $f_0,\cdots,f_{m-1}$; we assume that these maps are continuous.
Let $C(X,\mathbb{R})$ be the space of all the real-valued continuous functions of $X$. For $w \in F_m^{+}$, $w'\leq w$ and $\varphi \in C(X,\mathbb{R})$, we denote $\sum_{w'\leq w}\varphi(f_{w'}x)$ by $(S_{w}\varphi)(x)$.

\subsection{Definition using spanning sets}\label{subsec:spanning sets}
\begin{definition}\label{def:spanning sets1}
For $w \in F_m^{+}$, $\varphi \in C(X,\mathbb{R})$, $n \geq 1$ and $\varepsilon>0$ put
\begin{align*}
&\quad Q_w(f_0,\ldots,f_{m-1},\varphi,\varepsilon)\\&=\inf\{\sum_{x \in F}e^{(S_{w}\varphi)(x)}| F~is~a~ (w,\varepsilon,f_0,\ldots,f_{m-1})~spanning~set~of~X \},
\end{align*}
\[Q_n(f_0,\ldots,f_{m-1},\varphi,\varepsilon)=\frac{1}{m^n}\sum_{|w|=n} Q_w(f_0,\ldots,f_{m-1},\varphi,\varepsilon).\]
\end{definition}

\begin{remark} \label{remark:spanning sets1}
According to the foregoing description, the following statements are true.
$\\$

(1) $0<Q_w(f_0,\ldots,f_{m-1},\varphi,\varepsilon) \leq \|e^{S_{w}\varphi}\|B(w,\varepsilon,f_0,\ldots,f_{m-1})< \infty$, where  $\|\varphi\|=\max_{x \in X}|\varphi|$. Hence, \[0<Q_n(f_0,\ldots,f_{m-1},\varphi,\varepsilon) \leq e^{n\|\varphi\|}B(n,\varepsilon,f_0,\ldots,f_{m-1})< \infty.\]

(2) If $\varepsilon_1 < \varepsilon_2$, then $Q_w(f_0,\ldots,f_{m-1},\varphi,\varepsilon_1)\geq Q_w(f_0,\ldots,f_{m-1},\varphi,\varepsilon_2)$. Hence, \[Q_n(f_0,\ldots,f_{m-1},\varphi,\varepsilon_1)\geq Q_n(f_0,\ldots,f_{m-1},\varphi,\varepsilon_2).\]

(3) $Q_w(f_0,\ldots,f_{m-1},0,\varepsilon)=B(w,\varepsilon,f_0,\ldots,f_{m-1})$. Hence, \[Q_n(f_0,\ldots,f_{m-1},0,\varepsilon)=B(n,\varepsilon,f_0,\ldots,f_{m-1}).\]

(4)In Definition \ref{def:spanning sets1} it suffices to take the infinium over those spanning sets which don't have proper subsets that $(w,\varepsilon)$ span $X$. This is because $e^{(S_{w}\varphi)(x)}>0$.
\end{remark}

Set \[Q(f_0,\ldots,f_{m-1},\varphi,\varepsilon)=\limsup_{n\rightarrow \infty} \frac{1}{n}\log Q_n(f_0,\ldots,f_{m-1},\varphi,\varepsilon).\]

\begin{remark}\label{remark:spanning sets2}
$\\$

(1) $Q(f_0,\ldots,f_{m-1},\varphi,\varepsilon) \leq \|\varphi\|+\limsup_{n\rightarrow \infty} \frac{1}{n}\log B(n,\varepsilon,f_0,\ldots,f_{m-1})< \infty$.

(2) If $\varepsilon_1 < \varepsilon_2$, then $Q(f_0,\ldots,f_{m-1},\varphi,\varepsilon_1)\geq Q(f_0,\ldots,f_{m-1},\varphi,\varepsilon_2)$ (according to Remark \ref{remark:spanning sets1}(2)), i.e., $Q(f_0,\ldots,f_{m-1},\varphi,\varepsilon)$ is monotonous with respect to $\varepsilon$.

\end{remark}

\begin{definition}\label{def:spanning sets2}
For $\varphi \in C(X,\mathbb{R})$, the topological pressure for a free semigroup action with respect to $\varphi$ is defined as \[P(f_0,\ldots,f_{m-1},\varphi)=\lim_{\varepsilon\rightarrow 0}Q(f_0,\ldots,f_{m-1},\varphi,\varepsilon).\] If we let $G:=\{f_0,\cdots,f_{m-1}\}$, then we also denote $P(f_0,\ldots,f_{m-1},\varphi)$ by $P(G,\varphi)$.
\end{definition}

When $m=1$, let $f_0=f$, this definition is just the topological pressure $P(f,\varphi)$ for dynamical system $(X,f)$\cite{WALTER2}. When $\varphi=0$, $P(f_0,\ldots,f_{m-1},\varphi)$ is just the topological entropy for a free semigroup action defined in Section \ref{def:topological} and is denoted by $h(f_0,\ldots,f_{m-1})$\cite{BUfE}. Furthermore, if $m=1$ and $\varphi=0$, then it is trivial that $P(f_0,\ldots,f_{m-1},\varphi)$ is the topological entropy of $f$(usually denoted by $h(f)$) for dynamical system $(X,f)$\cite{Bowen,WALTER2}.

\begin{remark}\label{remark:spanning sets3}
By Remark \ref{remark:spanning sets2}(2), $P(f_0,\ldots,f_{m-1},\varphi)$ exists, but could be $\infty$. For example, when $m=1,\varphi=0$, we have $P(f_0,\ldots,f_{m-1},\varphi)=h(f_0)$, i.e., the classical topological entropy in the sense of Bowen. Many literatures show that $h(f_0)=\infty$, e.g.,\cite{BUGR}.
\end{remark}

\subsection{Definition using separated sets}\label{subsec:separated sets}

\begin{definition}\label{def:separated sets1}
For $w \in F_m^{+}$, $\varphi \in C(X,\mathbb{R})$, $n \geq 1$ and $\varepsilon>0$ put
\begin{align*}
&\quad P_w(f_0,\ldots,f_{m-1},\varphi,\varepsilon)\\&=\sup\{\sum_{x \in E}e^{(S_{w}\varphi)(x)}| E~is~a~ (w,\varepsilon,f_0,\ldots,f_{m-1})~separated~subset~of~X \},
\end{align*}
\[P_n(f_0,\ldots,f_{m-1},\varphi,\varepsilon)=\frac{1}{m^n} \sum_{|w|=n} P_w(f_0,\ldots,f_{m-1},\varphi,\varepsilon).\]
\end{definition}

\begin{remark}\label{remark:separated sets1}
$\\$

(1) If $\varepsilon_1 < \varepsilon_2$, then $P_w(f_0,\ldots,f_{m-1},\varphi,\varepsilon_1)\geq P_w(f_0,\ldots,f_{m-1},\varphi,\varepsilon_2)$. Hence, \[P_n(f_0,\ldots,f_{m-1},\varphi,\varepsilon_1)\geq P_n(f_0,\ldots,f_{m-1},\varphi,\varepsilon_2).\]

(2) $P_w(f_0,\ldots,f_{m-1},0,\varepsilon)=N(w,\varepsilon,f_0,\ldots,f_{m-1})$. Hence, \[P_n(f_0,\ldots,f_{m-1},0,\varepsilon)=N(n,\varepsilon,f_0,\ldots,f_{m-1}).\]

(3)In Definition \ref{def:separated sets1} it suffices to take the supremum over all $(w,\varepsilon)$-separated sets which fail to be $(w,\varepsilon)$-separated when any point of $X$ is added. This is because $e^{(S_{w}\varphi)(x)}>0$.

(4) $Q_w(f_0,\ldots,f_{m-1},\varphi,\varepsilon)\leq P_w(f_0,\ldots,f_{m-1},\varphi,\varepsilon)$. This follows from Remark (3) and the fact that a $(w,\varepsilon,f_0,\ldots,f_{m-1})$-separated set which can not be enlarged to a $(w,\varepsilon,f_0,\ldots,f_{m-1})$-separated set must be a $(w,\varepsilon,f_0,\ldots,f_{m-1})$-spanning set of $X$. Hence \[Q_n(f_0,\ldots,f_{m-1},\varphi,\varepsilon)\leq P_n(f_0,\ldots,f_{m-1},\varphi,\varepsilon).\]

(5) If $\delta=\sup\{|\varphi(x)-\varphi(y)| : d(x,y)\leq \frac{\varepsilon}{2}\}$, then \[P_n(f_0,\ldots,f_{m-1},\varphi,\varepsilon)\leq e^{n\delta}Q_n(f_0,\ldots,f_{m-1},\varphi,\frac{\varepsilon}{2}) .\]
\begin{proof}
For $w \in F_m^{+}, |w|=n$, let $E$ be a $(w,\varepsilon,f_0,\ldots,f_{m-1})$ separated set and $F$ a $(w,\frac{\varepsilon}{2},f_0,\ldots,f_{m-1})$ spanning set. Define $\phi : E \rightarrow F$ by choosing, for each $x \in E$, some point $\phi(x) \in F$ with $d_w(x,\phi(x))\leq \frac{\varepsilon}{2}$(using the notation $d_w(x,y)=\max_{w'\leq w}d(f_{w'}(x),f_{w'}(y))$). Then $\phi$ is injective so
\begin{align*}
\sum_{y \in F}e^{(S_{w}\varphi)(y)} \geq \sum_{y \in \phi E}e^{(S_{w}\varphi)(y)} &\geq (\min_{x \in E} e^{(S_{w}\varphi)(\phi x)-(S_{w}\varphi)(x)})\sum_{x \in E}e^{(S_{w}\varphi)(x)}\\
& \geq e^{-|w|\delta}\sum_{x \in E}e^{(S_{w}\varphi)(x)}.
\end{align*}
Therefore $P_w(f_0,\ldots,f_{m-1},\varphi,\varepsilon)\leq e^{|w|\delta}Q_w(f_0,\ldots,f_{m-1},\varphi,\frac{\varepsilon}{2})$. Obviously, we have \[P_n(f_0,\ldots,f_{m-1},\varphi,\varepsilon)\leq e^{n\delta}Q_n(f_0,\ldots,f_{m-1},\varphi,\frac{\varepsilon}{2}).\]
\end{proof}
\end{remark}

Then, set \[P(f_0,\ldots,f_{m-1},\varphi,\varepsilon)=\limsup_{n\rightarrow \infty} \frac{1}{n}\log P_n(f_0,\ldots,f_{m-1},\varphi,\varepsilon).\]

\begin{remark}\label{remark:separated sets2}
As above, the following statements are true.
$\\$

(1) $Q(f_0,\ldots,f_{m-1},\varphi,\varepsilon)\leq P(f_0,\ldots,f_{m-1},\varphi,\varepsilon)$(according to Remark \ref{remark:separated sets1}(4)).

(2) If $\delta=\sup\{|\varphi(x)-\varphi(y)| : d(x,y)\leq \frac{\varepsilon}{2}\}$, then \[P(f_0,\ldots,f_{m-1},\varphi,\varepsilon)\leq \delta+Q(f_0,\ldots,f_{m-1},\varphi,\varepsilon) \](according to Remark \ref{remark:separated sets1}(5)).

(3) If $\varepsilon_1 < \varepsilon_2$, then  \[P(f_0,\ldots,f_{m-1},\varphi,\varepsilon_1)\geq P(f_0,\ldots,f_{m-1},\varphi,\varepsilon_2)\](according to Remark \ref{remark:separated sets1}(1)).
\end{remark}

\begin{theorem}\label{thm:spanning is separated}
If $\varphi \in C(X,\mathbb{R})$, then \[P(f_0,\ldots,f_{m-1},\varphi)=\lim_{\varepsilon\rightarrow 0}P(f_0,\ldots,f_{m-1},\varphi,\varepsilon).\]
\end{theorem}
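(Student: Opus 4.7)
The plan is to establish the equality by squeezing the separated-set quantity $P(f_0,\ldots,f_{m-1},\varphi,\varepsilon)$ between two spanning-set quantities, and then pass to the limit $\varepsilon\to 0$. Both $Q(f_0,\ldots,f_{m-1},\varphi,\varepsilon)$ and $P(f_0,\ldots,f_{m-1},\varphi,\varepsilon)$ are monotonically non-increasing in $\varepsilon$ (Remarks \ref{remark:spanning sets2}(2) and \ref{remark:separated sets2}(3)), so the limits as $\varepsilon\to 0$ both exist in $[0,\infty]$. By definition, $P(f_0,\ldots,f_{m-1},\varphi)=\lim_{\varepsilon\to 0}Q(f_0,\ldots,f_{m-1},\varphi,\varepsilon)$, so the task reduces to showing $\lim_{\varepsilon\to 0}Q(\cdot,\varepsilon)=\lim_{\varepsilon\to 0}P(\cdot,\varepsilon)$.

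First I would use Remark \ref{remark:separated sets2}(1), namely $Q(f_0,\ldots,f_{m-1},\varphi,\varepsilon)\le P(f_0,\ldots,f_{m-1},\varphi,\varepsilon)$ for every $\varepsilon>0$. Letting $\varepsilon\to 0$ on both sides yields one inequality,
\[
P(f_0,\ldots,f_{m-1},\varphi)=\lim_{\varepsilon\to 0}Q(f_0,\ldots,f_{m-1},\varphi,\varepsilon)\le\lim_{\varepsilon\to 0}P(f_0,\ldots,f_{m-1},\varphi,\varepsilon).
\]

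For the reverse inequality I would invoke Remark \ref{remark:separated sets2}(2): with $\delta(\varepsilon):=\sup\{|\varphi(x)-\varphi(y)|:d(x,y)\le\varepsilon/2\}$, one has
\[
P(f_0,\ldots,f_{m-1},\varphi,\varepsilon)\le\delta(\varepsilon)+Q(f_0,\ldots,f_{m-1},\varphi,\varepsilon/2).
\]
Since $X$ is compact and $\varphi\in C(X,\mathbb{R})$, $\varphi$ is uniformly continuous, so $\delta(\varepsilon)\to 0$ as $\varepsilon\to 0$. Taking $\varepsilon\to 0$ on both sides, the $\delta$ term vanishes and $Q(f_0,\ldots,f_{m-1},\varphi,\varepsilon/2)$ tends to $P(f_0,\ldots,f_{m-1},\varphi)$, giving the matching upper bound
\[
\lim_{\varepsilon\to 0}P(f_0,\ldots,f_{m-1},\varphi,\varepsilon)\le P(f_0,\ldots,f_{m-1},\varphi).
\]
Combining the two inequalities yields the claimed equality.

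There is essentially no obstacle: the substantive content (the inequalities between $Q$ and $P$ at fixed $\varepsilon$, and the comparison $P(\cdot,\varepsilon)\le e^{n\delta}Q(\cdot,\varepsilon/2)$ at the finite level) has already been packaged into Remark \ref{remark:separated sets1} and Remark \ref{remark:separated sets2}. The only mild care needed is to justify passing the limit through both sides of the second inequality, which is unproblematic given monotonicity in $\varepsilon$ and uniform continuity of $\varphi$; one must allow for the value $+\infty$ since $P(f_0,\ldots,f_{m-1},\varphi)$ can be infinite (Remark \ref{remark:spanning sets3}), but the inequalities remain valid in $[0,\infty]$.
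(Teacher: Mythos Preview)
Your proposal is correct and follows essentially the same route as the paper: both use Remark \ref{remark:separated sets2}(1) for the lower bound and Remark \ref{remark:separated sets2}(2) together with uniform continuity of $\varphi$ for the upper bound, then let $\varepsilon\to 0$. Your version is slightly more explicit about the role of uniform continuity and the possibility that the common limit is $+\infty$, but the argument is the same.
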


\begin{proof}
The limit exists by Remark \ref{remark:separated sets2}(3). By Remark \ref{remark:separated sets2}(1) we know that \[P(f_0,\ldots,f_{m-1},\varphi)\leq \lim_{\varepsilon\rightarrow 0}P(f_0,\ldots,f_{m-1},\varphi,\varepsilon).\] By Remark \ref{remark:separated sets2}(2), for any $\delta >0$, we have \[\lim_{\varepsilon\rightarrow 0}P(f_0,\ldots,f_{m-1},\varphi,\varepsilon)\leq \delta +P(f_0,\ldots,f_{m-1},\varphi),\] which implies \[\lim_{\varepsilon\rightarrow 0}P(f_0,\ldots,f_{m-1},\varphi,\varepsilon)\leq P(f_0,\ldots,f_{m-1},\varphi).\]
\end{proof}

We introduce the definition of topological pressure in the sense of Ma and Liu\cite{LSHMA} and give the relationship between it and the topological pressure we give. Let $G_1=\{id_X, f_0,\cdots,f_{m-1}\}$ and
\begin{align*}
&\quad A(\varphi,G_1,\varepsilon,n)\\&:=\sup\{\Sigma_{x \in E}e^{(S_n\varphi)(x)}:E~~is~~a~~(n,\varepsilon)-separated~~subset~~in~~the~~sense~~of~~\text{Bi\'s} \}.
\end{align*}
In \cite{LSHMA}, the topological pressure
\[P_X(\varphi,G_1)=\lim_{\varepsilon\rightarrow0}\limsup_{n\rightarrow\infty}\frac{1}{n}\log A(\varphi,G_1,\varepsilon,n).\]

\begin{remark}
(1) It is easy to see that $P(f_0,\ldots,f_{m-1},\varphi)\leq P_X(\varphi,G_1)$.

(2) Recently, in the course of revising our paper, we find that Rodrigues and Varandas\cite{RODRIGUES} also introduce the definition of topological pressure using separated sets.
\end{remark}

\subsection{Definition using open covers}\label{subsec:open cover}

Let $(X,d)$ be a compact metric space and $\alpha$ be an open cover of $X$. Let $w \in F_m^{+}, w=w_1w_2\ldots w_k$, where $w_i=0,1,\ldots,m-1$ for all $i=1,\ldots,k$. Recall that $w' \leq w$ if there exists a word $w'' \in F_m^{+}$ such that $w=w''w'$. for any $w' \leq w\in F_m^{+}$, denote $f_{w'}^{-1}(\alpha)=\{f_{w'}^{-1}(A) | A \in \alpha \}$. Let $\alpha,\beta$ be two open covers of $X$, we use $\alpha\vee\beta$ denote the open cover by all sets of the form $A\cap B$ where $A \in \alpha,B \in \beta$. We use $\alpha<\beta$ denote $\beta$ is a refinement of $\alpha$, i.e., every member of $\beta$ is a subset of a member of $\alpha$. If $\varphi \in C(X,\mathbb{R})$, then set

\begin{align*}
&\quad q_w(f_0,\ldots,f_{m-1},\varphi,\alpha)\\&=\inf \left\{\sum_{B \in \beta}\inf_{x \in B} e^{(S_{w}\varphi)(x)}| \beta~is~a~finite~subcover~of~\bigvee_{w'\leq w}f_{w'}^{-1}(\alpha) \right\},
\end{align*}
\[q_n(f_0,\ldots,f_{m-1},\varphi,\alpha)=\frac{1}{m^n}\sum_{|w|=n} q_w(f_0,\ldots,f_{m-1},\varphi,\alpha).\]
and
\begin{align*}
&\quad p_w(f_0,\ldots,f_{m-1},\varphi,\alpha)\\&=\inf\left\{\sum_{B \in \beta}\sup_{x \in B} e^{(S_{w}\varphi)(x)}| \beta~is~a~finite~subcover~of~\bigvee_{w'\leq w}f_{w'}^{-1}(\alpha) \right\},
\end{align*}
\[p_n(f_0,\ldots,f_{m-1},\varphi,\alpha)=\frac{1}{m^n}\sum_{|w|=n} p_w(f_0,\ldots,f_{m-1},\varphi,\alpha).\]
Clearly, $q_w(f_0,\ldots,f_{m-1},\varphi,\alpha)\leq p_w(f_0,\ldots,f_{m-1},\varphi,\alpha)$, and $q_n(f_0,\ldots,f_{m-1},\varphi,\alpha)\leq p_n(f_0,\ldots,f_{m-1},\varphi,\alpha)$.

\begin{theorem}\label{thm:open cover1}
Let $f_i:X\rightarrow X$ be continuous, $i=0,\ldots,m-1$, and $\varphi \in C(X,\mathbb{R})$.

$(\rmnum{1})$ If $\alpha$ is an open cover of $X$ with Lebesgue number $\delta$, then $q_n(f_0,\ldots,f_{m-1},\varphi,\alpha)\leq Q_n(f_0,\ldots,f_{m-1},\varphi,\frac{\delta}{2}) \leq P_n(f_0,\ldots,f_{m-1},\varphi,\frac{\delta}{2})$.

$(\rmnum{2})$ If $\varepsilon>0$ and $\gamma$ is an open cover with $diam(\gamma)\leq \varepsilon$, then $Q_n(f_0,\ldots,f_{m-1},\varphi,\varepsilon)\leq P_n(f_0,\ldots,f_{m-1},\varphi,\varepsilon) \leq p_n(f_0,\ldots,f_{m-1},\varphi,\gamma)$.
\end{theorem}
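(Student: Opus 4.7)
Both inequalities are ``at the level of a single word'' statements: I will prove the corresponding inequality for each fixed $w\in F_m^{+}$ with $|w|=n$, and then average over $|w|=n$ with the weight $1/m^{n}$, which is preserved under pointwise inequalities. In each case the middle inequality is just Remark~\ref{remark:separated sets1}(4), so the real work is in the outer inequalities, and these mimic the classical Walters argument adapted to the Bowen metric $d_w$ and to covers of the form $\bigvee_{w'\leq w} f_{w'}^{-1}(\alpha)$.

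\textbf{Proof of (i), first inequality.} Fix $w$ with $|w|=n$ and let $F\subseteq X$ be any $(w,\delta/2,f_0,\ldots,f_{m-1})$-spanning set. For each $x\in F$ and each $w'\leq w$, the open ball $B(f_{w'}(x),\delta/2)$ has diameter at most $\delta$, so by the Lebesgue number property there exists $A_{w',x}\in\alpha$ with $B(f_{w'}(x),\delta/2)\subseteq A_{w',x}$. Setting
\[
C_x=\bigcap_{w'\leq w} f_{w'}^{-1}(A_{w',x}) \;\in\; \bigvee_{w'\leq w} f_{w'}^{-1}(\alpha),
\]
the spanning property gives, for each $z\in X$, some $x\in F$ with $d_w(z,x)<\delta/2$, which forces $f_{w'}(z)\in B(f_{w'}(x),\delta/2)\subseteq A_{w',x}$ for every $w'\leq w$, i.e.\ $z\in C_x$. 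Hence $\beta=\{C_x : x\in F\}$ is a finite subcover of $\bigvee_{w'\leq w} f_{w'}^{-1}(\alpha)$, so using $x\in C_x$ I obtain
\[
q_w(f_0,\ldots,f_{m-1},\varphi,\alpha)\leq \sum_{x\in F}\inf_{y\in C_x} e^{(S_w\varphi)(y)}\leq \sum_{x\in F} e^{(S_w\varphi)(x)}.
\]
Taking the infimum over $F$ yields $q_w\leq Q_w(f_0,\ldots,f_{m-1},\varphi,\delta/2)$, and averaging over $|w|=n$ gives the first inequality of (i); the middle inequality is Remark~\ref{remark:separated sets1}(4).

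\textbf{Proof of (ii), second inequality.} Again the left-hand inequality is Remark~\ref{remark:separated sets1}(4). For the right-hand one, fix $w$ with $|w|=n$, let $E$ be a $(w,\varepsilon,f_0,\ldots,f_{m-1})$-separated subset, and let $\beta$ be any finite subcover of $\bigvee_{w'\leq w} f_{w'}^{-1}(\gamma)$. Every $B\in\beta$ has the form $B=\bigcap_{w'\leq w} f_{w'}^{-1}(A_{w'})$ with $A_{w'}\in\gamma$, so any two points $x_{1},x_{2}\in B$ satisfy $d(f_{w'}(x_{1}),f_{w'}(x_{2}))\leq \mathrm{diam}(A_{w'})\leq\varepsilon$ for all $w'\leq w$, hence $d_{w}(x_{1},x_{2})\leq\varepsilon$. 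Thus each $B\in\beta$ contains at most one point of $E$ (this is where one invokes the usual small-perturbation convention to handle the boundary equality case); picking for each $x\in E$ some $B_{x}\in\beta$ with $x\in B_{x}$ gives an injection $E\hookrightarrow\beta$ and
\[
\sum_{x\in E} e^{(S_w\varphi)(x)} \leq \sum_{B\in\beta} \sup_{y\in B} e^{(S_w\varphi)(y)}.
\]
Taking the infimum over $\beta$ gives $P_w(f_0,\ldots,f_{m-1},\varphi,\varepsilon) \leq p_w(f_0,\ldots,f_{m-1},\varphi,\gamma)$ after supping over $E$, and averaging over $|w|=n$ concludes (ii).

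\textbf{Main obstacle.} The only nontrivial point is the ``at most one point'' step in (ii): with the paper's conventions ($d_w(x_1,x_2)\geq\varepsilon$ for separated, $\mathrm{diam}(\gamma)\leq\varepsilon$), both inequalities are non-strict, so a literal argument allows boundary coincidences. I would address this either by noting the standard refinement to a slightly smaller cover $\gamma'$ with $\mathrm{diam}(\gamma')<\varepsilon$ (which does not change $p_n$ in the infimum/limit) or by adopting the usual convention that separated means strict inequality; the remainder of the proof then goes through verbatim.
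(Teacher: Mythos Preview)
Your proof is correct and follows essentially the same route as the paper: fix a word $w$, establish the inequality $q_w\le Q_w$ (respectively $P_w\le p_w$) by the standard Walters-type argument, and then average over $|w|=n$; the middle inequality in each part is Remark~\ref{remark:separated sets1}(4) in both your argument and the paper's. The only difference is cosmetic---the paper uses closed balls $\bar B(f_{w'}x,\delta/2)$ in (i) and simply asserts in (ii) that no member of $\bigvee_{w'\leq w}f_{w'}^{-1}(\gamma)$ can contain two separated points, glossing over the boundary coincidence you flag; your explicit acknowledgement of that non-strict/non-strict clash is more careful than the paper itself.
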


\begin{proof}
According to Remark \ref{remark:separated sets1}(4), $Q_n(f_0,\ldots,f_{m-1},\varphi,\varepsilon)\leq P_n(f_0,\ldots,f_{m-1},\varphi,\varepsilon)$ for all $\varepsilon>0$.

$(\rmnum{1})$ For $w \in F_m^{+}$, let $F$ be a $(w,\frac{\delta}{2},f_0,\ldots,f_{m-1})$ spanning set then \[X=\bigcup_{x \in F}\bigcap_{w' \leq w}f_{w'}^{-1}\bar{B}(f_{w'}x; \frac{\delta}{2}).\] Since each $\bar{B}(f_{w'}x; \frac{\delta}{2})$ is a subset of a member of $\alpha$ we have $q_w(f_0,\ldots,f_{m-1},\varphi,\alpha) \leq \sum_{x \in F}e^{(S_wf)(x)} $, then $q_w(f_0,\ldots,f_{m-1},\varphi,\alpha) \leq Q_w(f_0,\ldots,f_{m-1},\varphi,\frac{\delta}{2})$ and \[q_n(f_0,\ldots,f_{m-1},\varphi,\alpha)\leq Q_n(f_0,\ldots,f_{m-1},\varphi,\frac{\delta}{2}).\]

$(\rmnum{2})$  For $w \in F_m^{+}$, let $E$ be a $(w,\varepsilon,f_0,\ldots,f_{m-1})$ separated subset of $X$. Note that $\gamma$ is an open cover with $diam(\gamma)\leq \varepsilon$ then no member of $\bigvee_{w'\leq w}f_{w'}^{-1}(\gamma)$ contains two elements of $E$, We have $\sum_{x \in E}e^{(S_wf)(x)}\leq p_w(f_0,\ldots,f_{m-1},\varphi,\gamma) $. Then $P_w(f_0,\ldots,f_{m-1},\varphi,\varepsilon) \leq p_w(f_0,\ldots,f_{m-1},\varphi,\gamma)$ and \[P_n(f_0,\ldots,f_{m-1},\varphi,\varepsilon) \leq p_n(f_0,\ldots,f_{m-1},\varphi,\gamma).\]
\end{proof}

\begin{remark}\label{remark:open cover1}
(1) If $\alpha, \gamma$ are open covers of $X$ and $\alpha<\gamma$ then $q_n(f_0,\ldots,f_{m-1},\varphi,\alpha)\leq q_n(f_0,\ldots,f_{m-1},\varphi,\gamma)$.

(2) If $\tau_{\alpha}=\sup\{|\varphi(x)-\varphi(y)| : d(x,y)\leq diam(\alpha)\}$, then $p_n(f_0,\ldots,f_{m-1},\varphi,\alpha)\leq e^{n\tau_{\alpha}}q_n(f_0,\ldots,f_{m-1},\varphi,\alpha)$.
\end{remark}

Next we show the existence of $\lim_{n\rightarrow \infty}\frac{1}{n}\log p_n(f_0,\ldots,f_{m-1},\varphi,\alpha)$. Before proving it we first give the following lemma.
\begin{lemma}\label{lem:impor}\cite{WALTER2}
Let $\{a_n\}_{n \geq 1}$ be a sequence of real numbers such that $a_{n+p}\leq a_n+a_p ~\forall n,p$ then $\lim_{n\rightarrow \infty}a_n/n$ exists and equals $\inf_n a_n/n$.(The limit could be $-\infty$ but if the $a_n$ are bounded below then the limit will be non-negative.)
\end{lemma}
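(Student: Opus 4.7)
The plan is to invoke the classical division-algorithm argument (Fekete's subadditive lemma). I would fix a positive integer $p$, and for any integer $n > p$ use the Euclidean division to write $n = qp + r$ with $q \geq 1$ and $0 \leq r < p$. Iterating the subadditivity hypothesis gives $a_{qp} \leq q\, a_p$; one more application when $r \geq 1$ yields $a_n \leq q\, a_p + a_r$, while when $r = 0$ the bound $a_n \leq q\, a_p$ already suffices. In either case, dividing by $n$ produces the key inequality
\[
\frac{a_n}{n} \;\leq\; \frac{qp}{n}\cdot\frac{a_p}{p} \;+\; \frac{C_p}{n},
\]
where $C_p := \max\{0, a_1, a_2, \ldots, a_{p-1}\}$ depends only on $p$.

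Next I would let $n \to \infty$ with $p$ held fixed. Since $qp/n = 1 - r/n \to 1$ and $C_p/n \to 0$, this gives
\[
\limsup_{n \to \infty} \frac{a_n}{n} \;\leq\; \frac{a_p}{p}.
\]
As $p$ was arbitrary, taking the infimum over $p \geq 1$ yields $\limsup_{n\to\infty} a_n/n \leq \inf_{p} a_p/p$. On the other hand, the inequality $\inf_{p} a_p/p \leq \liminf_{n\to\infty} a_n/n$ is immediate from the definition of infimum. These two bounds together force $\lim_{n \to \infty} a_n/n$ to exist in $[-\infty, \infty)$ and to equal $\inf_{n} a_n/n$.

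For the parenthetical assertion, if $a_n \geq M$ for all $n$ and some $M \in \mathbb{R}$, then $a_n/n \geq M/n$, whose limit is $0$; hence the limit must be non-negative.

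The argument is entirely standard and presents no genuine obstacle; the only cosmetic subtleties are the separate treatment of the case $r = 0$ in the division argument, which I absorb into the auxiliary constant $C_p$, and the observation that the finite collection $\{a_1, \ldots, a_{p-1}\}$ is automatically bounded so that the error term $C_p/n$ vanishes in the limit.
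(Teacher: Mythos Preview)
Your argument is the standard Fekete subadditivity proof and is correct. Note, however, that the paper does not supply its own proof of this lemma at all: it is simply quoted from Walters~\cite{WALTER2} and used as a black box. Your division-algorithm argument is exactly the classical one found there, so there is nothing to compare.
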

\begin{lemma}\label{lem:equalinf}
If $\varphi \in C(X,\mathbb{R})$ and $\alpha$ is an open cover of $X$ then
\[\lim_{n\rightarrow \infty}\frac{1}{n}\log p_n(f_0,\ldots,f_{m-1},\varphi,\alpha)\]
exists and equals to $\inf_n(1/n)\log p_n(f_0,\ldots,f_{m-1},\varphi,\alpha)$.
\end{lemma}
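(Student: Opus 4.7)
The plan is to apply Lemma \ref{lem:impor} to the sequence $a_n := \log p_n(f_0,\ldots,f_{m-1},\varphi,\alpha)$; once subadditivity $a_{n+p}\leq a_n+a_p$ is verified, the existence of the limit and its identification with $\inf_n a_n/n$ are immediate. So the whole task is to prove the multiplicative estimate
\[p_{n+p}(f_0,\ldots,f_{m-1},\varphi,\alpha)\leq p_n(f_0,\ldots,f_{m-1},\varphi,\alpha)\cdot p_p(f_0,\ldots,f_{m-1},\varphi,\alpha).\]

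First I would unpack the combinatorics of suffixes. Every word $w$ of length $n+p$ factors uniquely as $w=w_2 w_1$ with $|w_2|=n$ and $|w_1|=p$. Since the relation $w'\leq w$ in this paper means $w'$ is a suffix of $w$, the suffixes of $w$ split into the suffixes of $w_1$ together with the words $u w_1$ for $u$ a non-empty suffix of $w_2$, and $f_{u w_1}=f_u\circ f_{w_1}$. This yields simultaneously the cover identity
\[\bigvee_{w'\leq w}f_{w'}^{-1}\alpha=\Bigl(\bigvee_{w'\leq w_1}f_{w'}^{-1}\alpha\Bigr)\vee f_{w_1}^{-1}\Bigl(\bigvee_{w''\leq w_2}f_{w''}^{-1}\alpha\Bigr)\]
and the cocycle identity $(S_w\varphi)(x)=(S_{w_1}\varphi)(x)+(S_{w_2}\varphi)(f_{w_1}x)$.

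Next, given any finite subcovers $\beta_1$ of $\bigvee_{w'\leq w_1}f_{w'}^{-1}\alpha$ and $\beta_2$ of $\bigvee_{w''\leq w_2}f_{w''}^{-1}\alpha$, the cover identity shows that $\{B_1\cap f_{w_1}^{-1}B_2:B_1\in\beta_1,\,B_2\in\beta_2\}$ is a finite subcover of $\bigvee_{w'\leq w}f_{w'}^{-1}\alpha$. The cocycle identity and the monotonicity of $\sup$ under $x\in f_{w_1}^{-1}B_2\Rightarrow f_{w_1}(x)\in B_2$ give
\[\sup_{x\in B_1\cap f_{w_1}^{-1}B_2}e^{(S_w\varphi)(x)}\leq \Bigl(\sup_{x\in B_1}e^{(S_{w_1}\varphi)(x)}\Bigr)\Bigl(\sup_{y\in B_2}e^{(S_{w_2}\varphi)(y)}\Bigr),\]
so the double sum over $(B_1,B_2)\in\beta_1\times\beta_2$ factors as a product of two single sums. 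Taking the infimum separately over $\beta_1$ and $\beta_2$ produces $p_w(\varphi,\alpha)\leq p_{w_1}(\varphi,\alpha)\cdot p_{w_2}(\varphi,\alpha)$.

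Finally, summing over all $w$ of length $n+p$ is the same as summing over all pairs $(w_2,w_1)$ with $|w_2|=n$, $|w_1|=p$, so
\[\sum_{|w|=n+p}p_w(\varphi,\alpha)\leq \Bigl(\sum_{|w_2|=n}p_{w_2}(\varphi,\alpha)\Bigr)\Bigl(\sum_{|w_1|=p}p_{w_1}(\varphi,\alpha)\Bigr);\]
dividing by $m^{n+p}=m^n m^p$ yields $p_{n+p}\leq p_n p_p$, which is the required subadditivity. The main obstacle is not analytic but notational: keeping the suffix convention $w'\leq w$ consistent with the composition law $f_w=f_{w_1}\circ\cdots\circ f_{w_k}$, so that in the decomposition $w=w_2 w_1$ it really is $f_{w_1}$ (the block applied first) that appears as the $f_{w_1}^{-1}$ in front of the second factor. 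Once this ordering is fixed, the argument is a direct parallel of the single-transformation subadditivity for pressure, run uniformly across all $w$ of length $n+p$ and then averaged via the $\tfrac{1}{m^{n+p}}$ factor.
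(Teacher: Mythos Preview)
Your proposal is correct and follows essentially the same approach as the paper: both reduce to Lemma~\ref{lem:impor} via the submultiplicativity $p_{n+p}\leq p_n p_p$, obtained by splitting each word of length $n+p$, building a product subcover, and then summing over all words and dividing by $m^{n+p}$. Your tracking of the suffix convention---pulling back by $f_{w_1}^{-1}$ for the \emph{suffix} block $w_1$---is in fact slightly more careful than the paper's presentation, which writes $\beta\vee f_{w^{(1)}}^{-1}(\gamma)$ with $w^{(1)}$ the prefix; the discrepancy is harmless after summing over all decompositions, and the arguments are otherwise identical.
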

\begin{proof}
For any $w=w^{(1)}w^{(2)},|w^{(1)}|=n_1,|w^{(2)}|=n_2,|w|=n_1+n_2$,
by Lemma \ref{lem:impor} it suffices to show \[p_{n_1+n_2}(f_0,\ldots,f_{m-1},\varphi,\alpha) \leq p_{n_1}(f_0,\ldots,f_{m-1},\varphi,\alpha)\cdot p_{n_2}(f_0,\ldots,f_{m-1},\varphi,\alpha).\]
If $\beta$ is a finite subcover of $\bigvee_{w'\leq w^{(1)}}f_{w'}^{-1}(\alpha)$ and $\gamma$ is a finite subcover of $\bigvee_{w'\leq w^{(2)}}f_{w'}^{-1}(\alpha)$ then $\beta \bigvee f_{w^{(1)}}^{-1}(\gamma)$ is a finite subcover of $\bigvee_{w'\leq w}f_{w'}^{-1}(\alpha)$, and we have
\[\sum_{D \in \beta \bigvee f_{w^{(1)}}^{-1}(\gamma)}\sup_{x \in D}e^{(S_w\varphi)}(x)\leq \left(\sum_{B \in \beta} \sup_{x \in B}e^{(S_{w^{(1)}}\varphi)}(x)\right)\cdot \left(\sum_{C \in \gamma} \sup_{x \in C}e^{(S_{w^{(2)}}\varphi)}(x)\right).\]
Therefore \[p_w(f_0,\ldots,f_{m-1},\varphi,\alpha) \leq p_{w^{(1)}}(f_0,\ldots,f_{m-1},\varphi,\alpha)\cdot p_{w^{(2)}}(f_0,\ldots,f_{m-1},\varphi,\alpha).\] Then
\begin{align*}
&\quad \frac{1}{m^{n_1+n_2}}\sum_{|w|=n_1+n_2}p_w(f_0,\ldots,f_{m-1},\varphi,\alpha)\\ & \leq \frac{1}{m^{n_1+n_2}}\sum_{|w|=n_1+n_2}p_{w^{(1)}}(f_0,\ldots,f_{m-1},\varphi,\alpha)\cdot p_{w^{(2)}}(f_0,\ldots,f_{m-1},\varphi,\alpha)\\ &=\left(\frac{1}{m^{n_1}}\sum_{|w^{(1)}|=n_1}p_{w^{(1)}}(f_0,\ldots,f_{m-1},\varphi,\alpha)\right)\cdot \left(\frac{1}{m^{n_2}}\sum_{|w^{(2)}|=n_2}p_{w^{(2)}}(f_0,\ldots,f_{m-1},\varphi,\alpha)\right).
\end{align*}
Thus  \[p_{n_1+n_2}(f_0,\ldots,f_{m-1},\varphi,\alpha) \leq p_{n_1}(f_0,\ldots,f_{m-1},\varphi,\alpha)\cdot p_{n_2}(f_0,\ldots,f_{m-1},\varphi,\alpha).\]

\end{proof}

The following theorem gives equivalent definitions of the topological pressure using open covers. The proof of Theorem \ref{thm:open cover2} is analogous to the classical case but for convenience of  a reader a modified proof is given.

\begin{theorem}\label{thm:open cover2}
Let $f_i:X\rightarrow X$ be continuous, $i=0,\ldots,m-1$, and $\varphi \in C(X,\mathbb{R})$. Then each of the following equals $P(f_0,\ldots,f_{m-1},\varphi)$.

$(\rmnum{1})$ $\lim_{\delta\rightarrow 0}[\sup_{\alpha}\{\lim_{n \rightarrow \infty}\frac{1}{n}\log p_n(f_0,\ldots,f_{m-1},\varphi,\alpha) | \alpha~is~an ~open ~cover~ \\ of~ X ~with~ diam(\alpha) \leq \delta \}]$.

$(\rmnum{2})$ $\lim_{k\rightarrow \infty}[\lim_{n \rightarrow \infty}\frac{1}{n}\log p_n(f_0,\ldots,f_{m-1},\varphi,\alpha_k)]$, where $ \{\alpha_k\}~is~a \\ ~sequence ~of ~open~covers~ with~ diam(\alpha_k) \rightarrow 0 $.

$(\rmnum{3})$ $\lim_{\delta\rightarrow 0}[\sup_{\alpha}\{\liminf_{n \rightarrow \infty}\frac{1}{n}\log q_n(f_0,\ldots,f_{m-1},\varphi,\alpha) | \alpha~is~an ~open ~cover~ \\ of~ X ~with~ diam(\alpha) \leq \delta \}]$.

$(\rmnum{4})$ $\lim_{\delta\rightarrow 0}[\sup_{\alpha}\{\limsup_{n \rightarrow \infty}\frac{1}{n}\log q_n(f_0,\ldots,f_{m-1},\varphi,\alpha) | \alpha~is~an ~open ~cover~ \\ of~ X ~with~ diam(\alpha) \leq \delta \}]$.

$(\rmnum{5})$ $\lim_{k\rightarrow \infty}[\limsup_{n \rightarrow \infty}\frac{1}{n}\log q_n(f_0,\ldots,f_{m-1},\varphi,\alpha_k)]$, where $ \{\alpha_k\}~is~a \\ ~sequence ~of ~open~covers~ with~ diam(\alpha_k) \rightarrow 0 $.

$(\rmnum{6})$ $\sup_{\alpha}\ \{\limsup_{n \rightarrow \infty}\frac{1}{n}\log q_n(f_0,\ldots,f_{m-1},\varphi,\alpha) | \alpha~is~an ~open ~cover~  of~ X  \}$.

$(\rmnum{7})$ $\lim_{\varepsilon\rightarrow 0} \liminf_{n \rightarrow \infty}\frac{1}{n}\log Q_n(f_0,\ldots,f_{m-1},\varphi,\varepsilon)$.

$(\rmnum{8})$ $\lim_{\varepsilon\rightarrow 0} \liminf_{n \rightarrow \infty}\frac{1}{n}\log P_n(f_0,\ldots,f_{m-1},\varphi,\varepsilon)$.

\end{theorem}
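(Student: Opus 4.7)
The plan is to establish that each of $(\rmnum{1})$--$(\rmnum{8})$ equals $P := P(f_0,\ldots,f_{m-1},\varphi)$ by weaving together three already-proved tools: the chain $q_n(\alpha) \leq Q_n(\delta_\alpha/2) \leq P_n(\delta_\alpha/2) \leq p_n(\gamma)$ from Theorem \ref{thm:open cover1} (where $\delta_\alpha$ is a Lebesgue number of $\alpha$ and $\mathrm{diam}(\gamma) \leq \delta_\alpha/2$); the continuity bound $p_n(\alpha) \leq e^{n\tau_\alpha} q_n(\alpha)$ of Remark \ref{remark:open cover1}(2), with $\tau_\alpha \to 0$ as $\mathrm{diam}(\alpha) \to 0$; and Lemma \ref{lem:equalinf}, which guarantees that $\lim_n \tfrac{1}{n}\log p_n(\alpha)$ exists for every open cover $\alpha$. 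By Definition \ref{def:spanning sets2} and Theorem \ref{thm:spanning is separated} one already has $P = \lim_{\varepsilon \to 0}\limsup_n \tfrac{1}{n}\log Q_n(\varepsilon) = \lim_{\varepsilon \to 0}\limsup_n \tfrac{1}{n}\log P_n(\varepsilon)$.

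I would first treat the $\liminf$ variants $(\rmnum{7})$ and $(\rmnum{8})$. The bounds $Q_n \leq P_n$ and $\liminf \leq \limsup$ immediately yield $(\rmnum{7}), (\rmnum{8}) \leq P$. For the reverse direction, fix $\varepsilon > 0$ and pick any open cover $\gamma$ with $\mathrm{diam}(\gamma) \leq \varepsilon$; chaining the three tools gives
\[
P_n(\varepsilon) \leq p_n(\gamma) \leq e^{n\tau_\gamma} q_n(\gamma) \leq e^{n\tau_\gamma} Q_n(\delta_\gamma/2).
\]
Because $\tfrac{1}{n}\log p_n(\gamma)$ has a genuine limit by Lemma \ref{lem:equalinf}, one can apply $\limsup$ on the far left and $\liminf$ on the far right to conclude $\limsup_n \tfrac{1}{n}\log P_n(\varepsilon) \leq \tau_\gamma + \liminf_n \tfrac{1}{n}\log Q_n(\delta_\gamma/2)$. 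Passing to the limit $\varepsilon \to 0$ (so $\tau_\gamma \to 0$) and using monotonicity of $\liminf_n \tfrac{1}{n}\log Q_n(\cdot)$ in $\varepsilon$ forces $P \leq (\rmnum{7})$; the same chain gives $P \leq (\rmnum{8})$.

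For $(\rmnum{1})$ and $(\rmnum{2})$, the direction $(\rmnum{1}), (\rmnum{2}) \geq P$ comes from $P_n(\varepsilon) \leq p_n(\gamma)$ when $\mathrm{diam}(\gamma) \leq \varepsilon$, while $(\rmnum{1}), (\rmnum{2}) \leq P$ follows from $p_n(\alpha) \leq e^{n\tau_\alpha}Q_n(\delta_\alpha/2)$ combined with the identity $P = \lim_{\varepsilon \to 0}\limsup_n \tfrac{1}{n}\log Q_n(\varepsilon)$. For $(\rmnum{3})$--$(\rmnum{5})$, the upper bound $q_n(\alpha) \leq Q_n(\delta_\alpha/2)$ gives the $\leq P$ side, while the $p_n$-bridge $P_n(\varepsilon) \leq p_n(\gamma) \leq e^{n\tau_\gamma} q_n(\gamma)$ (used exactly as in the previous step) gives the $\geq P$ side. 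For the unrestricted supremum $(\rmnum{6})$, the same upper bound yields $(\rmnum{6}) \leq P$; the reverse $(\rmnum{6}) \geq (\rmnum{4}) = P$ is immediate, since $(\rmnum{4})$ only supremises over covers of small diameter.

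The main technical obstacle is reconciling the asymmetric $\liminf$ and $\limsup$ present across the eight expressions. The decisive device is Lemma \ref{lem:equalinf}: because $\tfrac{1}{n}\log p_n(\alpha)$ has a true limit, a $\limsup$ on one side of the sandwich $P_n \leq p_n \leq e^{n\tau} q_n$ can be legitimately paired with a $\liminf$ on the other. This is precisely what propagates $P$ through to the $\liminf$-only expressions $(\rmnum{7}), (\rmnum{8})$ and $(\rmnum{3}), (\rmnum{5})$; every remaining step is a direct transcription of the single-map Walters argument with the extra averaging $m^{-n}\sum_{|w|=n}$ passing through each pointwise inequality.
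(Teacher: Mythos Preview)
Your proposal is correct and follows essentially the same approach as the paper: both arguments rely on the same three tools---Theorem~\ref{thm:open cover1} for the sandwich $q_n(\alpha)\leq Q_n(\delta/2)\leq P_n(\delta/2)\leq p_n(\gamma)$, Remark~\ref{remark:open cover1}(2) for $p_n(\alpha)\leq e^{n\tau_\alpha}q_n(\alpha)$, and Lemma~\ref{lem:equalinf} to convert $\limsup$ into $\liminf$ through the genuine limit of $\tfrac1n\log p_n(\alpha)$. The only cosmetic differences are that the paper treats $(\rmnum{1})$--$(\rmnum{2})$ first and then reduces $(\rmnum{7})$--$(\rmnum{8})$ to $(\rmnum{2})$ via explicit ball covers $\alpha_\varepsilon,\gamma_\varepsilon$, whereas you handle $(\rmnum{7})$--$(\rmnum{8})$ directly and first; the logical content is identical.
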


\begin{proof}
$(\rmnum{1})$ If $\delta >0$ and $\alpha$ is an open cover with $diam(\alpha) \leq \delta$, according to Theorem\ref{thm:open cover1}$(\rmnum{2})$, we have $P_n(f_0,\ldots,f_{m-1},\varphi,\delta) \leq p_n(f_0,\ldots,f_{m-1},\varphi,\alpha)$. Using Lemma \ref{lem:equalinf}, we have
\begin{align*}
& \quad P(f_0,\ldots,f_{m-1},\varphi,\delta)\\ &= \limsup_{n \rightarrow \infty}\frac{1}{n} \log P_n(f_0,\ldots,f_{m-1},\varphi,\delta)\\ &\leq \limsup_{n \rightarrow \infty}\frac{1}{n} \log p_n(f_0,\ldots,f_{m-1},\varphi,\alpha)\\ &= \lim_{n \rightarrow \infty}\frac{1}{n} \log p_n(f_0,\ldots,f_{m-1},\varphi,\alpha) \\ & \leq \sup_{\alpha}\left\{\lim_{n \rightarrow \infty}\frac{1}{n}\log p_n(f_0,\ldots,f_{m-1},\varphi,\alpha)|~\alpha~is~an~open~cover~of~X~with~diam(\alpha) \leq \delta\right\}.
\end{align*}

Now, taking $\limsup$ when $\delta \rightarrow 0$ gives that $P(f_0,\ldots,f_{m-1},\varphi)$ is no lager than the expression in $(\rmnum{1})$.

If $\alpha$ is an open cover with Lebesgue number $\delta$, then $q_n(f_0,\ldots,f_{m-1},\varphi,\alpha)\leq Q_n(f_0,\ldots,f_{m-1},\varphi,\frac{\delta}{2})$ by Theorem \ref{thm:open cover1}($\rmnum{1}$). Remark \ref{remark:open cover1}(2) shows that $ \\ p_n(f_0,\ldots,f_{m-1},\varphi,\alpha)\leq e^{n\tau_{\alpha}}q_n(f_0,\ldots,f_{m-1},\varphi,\alpha)$, where $\tau_{\alpha}=\sup\{|\varphi(x)-\varphi(y)| : d(x,y)\leq diam(\alpha)\}$. Thus \[p_n(f_0,\ldots,f_{m-1},\varphi,\alpha) \leq  e^{n\tau_{\alpha}}Q_n(f_0,\ldots,f_{m-1},\varphi,\frac{\delta}{2})\]
This implies
\begin{align*}
\lim_{n \rightarrow \infty}\frac{1}{n} \log p_n(f_0,\ldots,f_{m-1},\varphi,\alpha) & \leq \tau_{\alpha}+ P(f_0,\ldots,f_{m-1},\varphi,\frac{\delta}{2})
\end{align*}
and \[\lim_{\eta\rightarrow 0}\left[\sup_{\alpha}\{\lim_{n \rightarrow \infty}\frac{1}{n}\log p_n(f_0,\ldots,f_{m-1},\varphi,\alpha) | diam(\alpha) \leq \eta \}\right] \leq P(f_0,\ldots,f_{m-1},\varphi).\]
Therefore $(\rmnum{1})$ is proved. The same reasoning proves $(\rmnum{2})$.

$(\rmnum{3})$ We know $q_n(f_0,\ldots,f_{m-1},\varphi,\alpha)\leq p_n(f_0,\ldots,f_{m-1},\varphi,\alpha)$ for all $\alpha$. Remark \ref{remark:open cover1}(2) shows that $  p_n(f_0,\ldots,f_{m-1},\varphi,\alpha)\leq e^{n\tau_{\alpha}}q_n(f_0,\ldots,f_{m-1},\varphi,\alpha)$, where $\tau_{\alpha}=\sup\{|\varphi(x)-\varphi(y)| : d(x,y)\leq diam(\alpha)\}$. Hence \[e^{-n\tau_{\alpha}}p_n(f_0,\ldots,f_{m-1},\varphi,\alpha)\leq q_n(f_0,\ldots,f_{m-1},\varphi,\alpha)\leq p_n(f_0,\ldots,f_{m-1},\varphi,\alpha).\] This implies
\begin{align*}
-\tau_{\alpha}+\lim_{n \rightarrow \infty}\frac{1}{n} \log p_n(f_0,\ldots,f_{m-1},\varphi,\alpha) & \leq \liminf_{n \rightarrow \infty}\frac{1}{n} \log q_n(f_0,\ldots,f_{m-1},\varphi,\alpha) \\ & \leq \limsup_{n \rightarrow \infty}\frac{1}{n} \log q_n(f_0,\ldots,f_{m-1},\varphi,\alpha) \\ & \leq \lim_{n \rightarrow \infty}\frac{1}{n} \log p_n(f_0,\ldots,f_{m-1},\varphi,\alpha).
\end{align*}
The formulae in $(\rmnum{3}),(\rmnum{4})$, and $(\rmnum{5})$ follow from $(\rmnum{1})$ and $(\rmnum{2})$.

$(\rmnum{6})$ Let $\alpha$ be an open cover of $X$ and let $2\varepsilon$ be a Lebesgue number for $\alpha$. By Theorem \ref{thm:open cover1} $q_n(f_0,\ldots,f_{m-1},\varphi,\alpha)\leq Q_n(f_0,\ldots,f_{m-1},\varphi,\varepsilon)$ so that \[\limsup_{n\rightarrow \infty}q_n(f_0,\ldots,f_{m-1},\varphi,\alpha) \leq Q(f_0,\ldots,f_{m-1},\varphi,\varepsilon)\leq P(f_0,\cdots,f_{m-1},\varphi).\] Therefore the expression in $(\rmnum{6})$ is majorised by $P(f_0,\cdots,f_{m-1},\varphi)$. The opposite inequality follows from $(\rmnum{4})$.

$(\rmnum{7})$ and $(\rmnum{8})$ Let $\alpha_{\varepsilon}$ be an open cover of $X$ by all open balls of radius $2\varepsilon$ and $\gamma_{\varepsilon}$ denote any cover by balls of radius $\varepsilon/2$. Then, by Theorem \ref{thm:open cover1}$(\rmnum{1})$ and Remark \ref{remark:open cover1}(2) we have
\begin{align*}
&\quad e^{-n\tau_{4\varepsilon}}p_n(f_0,\ldots,f_{m-1},\varphi,\alpha_{\varepsilon})\\ &\leq q_n(f_0,\ldots,f_{m-1},\varphi,\alpha_{\varepsilon}) \leq Q_n(f_0,\ldots,f_{m-1},\varphi,\varepsilon)  \leq P_n(f_0,\ldots,f_{m-1},\varphi,\varepsilon)\\ &\leq p_n(f_0,\ldots,f_{m-1},\varphi,\gamma_{\varepsilon}),
\end{align*}
where $\tau_{4\varepsilon}=\sup\{|\varphi(x)-\varphi(y)|:d(x,y)\leq 4\varepsilon\}$. Then $(\rmnum{7})$ and $(\rmnum{8})$ follow by taking lim infs in this expression and using $(\rmnum{2})$.
\end{proof}
\begin{remark}\label{remark:open cover2}
According to $(\rmnum{6})$ of Theorem \ref{thm:open cover2}, $P(f_0,\ldots,f_{m-1},\varphi)$ does not depend on the metric on $X$.
\end{remark}
\subsection{Properties of the topological Pressure}\label{subsec:properties of pressure}
We now study the properties of $P(f_0,\ldots,f_{m-1},\cdot):C(X,\mathbb{R})\rightarrow \mathbb{R} \cup \{\infty\}$. The proof of Theorem \ref{thm:properties of pressure1} is analogous to the classical case but for convenience of a reader a modified proof is given.
\begin{theorem}\label{thm:properties of pressure1}
Let $f_i:X\rightarrow X$ be continuous transformations of a compact metric space $(X,d)$, $i=0,\ldots,m-1$. If $\varphi, \psi \in C(X,\mathbb{R}),\varepsilon>0$ and $c \in \mathbb{R}$, then  the following are true.

$(\rmnum{1})$ $P(f_0,\ldots,f_{m-1},0)=h(f_0,\ldots,f_{m-1})$.

$(\rmnum{2})$ $\varphi\leq \psi$ implies $P(f_0,\ldots,f_{m-1},\varphi)\leq P(f_0,\ldots,f_{m-1},\psi)$. In particular, $h(f_0,\ldots,f_{m-1})+ \inf \varphi \leq P(f_0,\ldots,f_{m-1},\varphi) \leq h(f_0,\ldots,f_{m-1})+\sup \varphi $.

$(\rmnum{3})$ $P(f_0,\ldots,f_{m-1},\cdot)$ is either finite valued or constantly $\infty$.

$(\rmnum{4})$ $|P(f_0,\ldots,f_{m-1},\varphi,\varepsilon)-P(f_0,\ldots,f_{m-1},\psi,\varepsilon)| \leq \|\varphi-\psi\|$, and so if

\noindent $P(f_0,\ldots,f_{m-1},\cdot) < \infty$, then $|P(f_0,\ldots,f_{m-1},\varphi)-P(f_0,\ldots,f_{m-1},\psi| \leq \|\varphi-\psi\|$. In other words, $P(f_0,\ldots,f_{m-1},\cdot)$ is a continuous function on $C(X, \mathbb{R})$.

$(\rmnum{5})$ $P(f_0,\ldots,f_{m-1},\cdot,\varepsilon)$ is convex, and so if $P(f_0,\ldots,f_{m-1},\cdot) < \infty$, then $P(f_0,\ldots,f_{m-1},\cdot)$ is convex.

$(\rmnum{6})$ $P(f_0,\ldots,f_{m-1},\varphi+c)=P(f_0,\ldots,f_{m-1},\varphi)+c$.

$(\rmnum{7})$ $P(f_0,\ldots,f_{m-1},\varphi+\psi) \leq P(f_0,\ldots,f_{m-1},\varphi)+P(f_0,\ldots,f_{m-1},\psi)+ \log m$.

$(\rmnum{8})$ $P(f_0,\ldots,f_{m-1},c\varphi)\leq cP(f_0,\ldots,f_{m-1},\varphi)+(c-1)\log m$ if $c \geq 1$ and $P(f_0,\ldots,f_{m-1},c\varphi)\geq cP(f_0,\ldots,f_{m-1},\varphi)+(c-1)\log m$ if $c \leq 1$.

$(\rmnum{9})$ $-2\log m-P(f_0,\ldots,f_{m-1},|\varphi|) \leq P(f_0,\ldots,f_{m-1},\varphi) \leq P(f_0,\ldots,f_{m-1},|\varphi|)$.
\end{theorem}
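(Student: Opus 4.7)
The plan is to establish the nine items in an order that lets later parts reuse earlier ones, treating the elementary statements quickly and concentrating on where the free-semigroup averaging produces the extra $\log m$ factors in (vii)--(ix).

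First I dispose of the easy facts. Part (i) is immediate from Remark \ref{remark:spanning sets1}(3): $Q_n(f_0,\ldots,f_{m-1},0,\varepsilon)=B(n,\varepsilon,f_0,\ldots,f_{m-1})$. For (vi), the identity $(S_w(\varphi+c))(x)=(S_w\varphi)(x)+c|w|$ gives $P_n(\varphi+c,\varepsilon)=e^{cn}P_n(\varphi,\varepsilon)$. Monotonicity in (ii) follows termwise from $\varphi\le\psi$, and the two-sided bound comes from applying monotonicity to the constants $\inf\varphi$ and $\sup\varphi$ together with (i) and (vi). For (iv), the bound $|(S_w\varphi)(x)-(S_w\psi)(x)|\le n\|\varphi-\psi\|$ for $|w|=n$ yields $P_n(\varphi,\varepsilon)\le e^{n\|\varphi-\psi\|}P_n(\psi,\varepsilon)$; taking $\frac{1}{n}\log\limsup$ and swapping roles gives the estimate, and (iii) is then immediate from (iv). For (v), H\"older pointwise on a separated set followed by H\"older on the average $\frac{1}{m^n}\sum_{|w|=n}$ (whose normalization is compatible since $(m^n)^p(m^n)^q/m^n=1$ when $p+q=1$) gives $P_n(p\varphi+q\psi,\varepsilon)\le P_n(\varphi,\varepsilon)^p P_n(\psi,\varepsilon)^q$.

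The core new content is (viii). For $c\ge 1$ the $\ell^p$-norm inequality $\sum a_i^c\le(\sum a_i)^c$ applied termwise on a separated set yields $P_w(c\varphi,\varepsilon)\le P_w(\varphi,\varepsilon)^c$, and the same inequality applied to the outer sum $\sum_{|w|=n}$ introduces exactly a factor $m^{n(c-1)}$, giving $P_n(c\varphi,\varepsilon)\le m^{n(c-1)}P_n(\varphi,\varepsilon)^c$; take $\frac{1}{n}\log\limsup$. For $c\le 1$ the reversed norm inequality $\sum a_i^c\ge(\sum a_i)^c$ gives the reversed bound $P_n(c\varphi,\varepsilon)\ge m^{n(c-1)}P_n(\varphi,\varepsilon)^c$. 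Now (vii) follows by combining Cauchy--Schwarz termwise, $\sum_E e^{(S_w\varphi)(x)+(S_w\psi)(x)}\le(\sum_E e^{2(S_w\varphi)(x)})^{1/2}(\sum_E e^{2(S_w\psi)(x)})^{1/2}$, with a second Cauchy--Schwarz on the $|w|=n$ average, yielding $P_n(\varphi+\psi,\varepsilon)\le P_n(2\varphi,\varepsilon)^{1/2}P_n(2\psi,\varepsilon)^{1/2}$; take $\frac{1}{n}\log\limsup$ and apply (viii) with $c=2$ to each factor, generating the additive $\log m$. Part (ix) is a short deduction: the upper bound $P(\varphi)\le P(|\varphi|)$ is just (ii), while the lower bound follows by rearranging (vii) applied to $\varphi,\psi\mapsto-|\varphi|,|\varphi|$ as $P(-|\varphi|)\ge P(0)-\log m - P(|\varphi|)$, combining with $P(\varphi)\ge P(-|\varphi|)$ from (ii) and $P(0)=h(f_0,\ldots,f_{m-1})\ge 0$ (since $B(w,\varepsilon,\ldots)\ge 1$), which supplies the remaining $-\log m\ge -2\log m$.

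The only genuinely delicate step is the transfer of the pointwise bound to the limsup in the $c\le 1$ case of (viii): since $\limsup$ does not interact well with reverse inequalities of this shape, one must pass to a subsequence $n_k$ on which $\frac{1}{n_k}\log P_{n_k}(\varphi,\varepsilon)\to P(f_0,\ldots,f_{m-1},\varphi,\varepsilon)$ and then take $\liminf_k$ of the left-hand side, exploiting that $\limsup_n\ge\liminf_k$ along any subsequence. Everything else is a bookkeeping adaptation of Walters' classical arguments, and the characteristic $\log m$ terms throughout (vii)--(ix) trace back directly to the mismatch between $\frac{1}{m^n}$ and the normalization produced by the $\ell^p$ inequalities on $\sum_{|w|=n}$.
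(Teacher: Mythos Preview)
Your proof is correct, but your treatment of (vii) and (ix) differs from the paper's. For (vii) the paper proceeds more directly: from the elementary bound $\sum_{x\in E}e^{(S_w\varphi)(x)+(S_w\psi)(x)}\le\bigl(\sum_{x\in E}e^{(S_w\varphi)(x)}\bigr)\bigl(\sum_{x\in E}e^{(S_w\psi)(x)}\bigr)$ one obtains $P_w(\varphi+\psi,\varepsilon)\le P_w(\varphi,\varepsilon)P_w(\psi,\varepsilon)$, and then the crude inequality $\sum_{|w|=n}a_wb_w\le\bigl(\sum_{|w|=n}a_w\bigr)\bigl(\sum_{|w|=n}b_w\bigr)$ on the outer sum produces the factor $m^n$ immediately, without the detour through Cauchy--Schwarz and (viii). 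Your route is valid but less economical. For (ix) the paper simply invokes (viii) with $c=-1$, which gives $P(-|\varphi|)\ge -P(|\varphi|)-2\log m$ on the nose; your use of (vii) instead yields the sharper intermediate bound $P(-|\varphi|)\ge h(f_0,\ldots,f_{m-1})-P(|\varphi|)-\log m$, so you actually prove a bit more before weakening to the stated estimate. One minor remark: your concern about the limsup in the $c\le 1$ case of (viii) is unnecessary --- for $0\le c\le 1$ limsup is order-preserving, and for $c<0$ the identity $\limsup_n(c\,d_n)=c\liminf_n d_n\ge c\limsup_n d_n$ already delivers the inequality without subsequences; your subsequence argument is of course also correct.
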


\begin{proof}

$(\rmnum{1})$ and $(\rmnum{2})$ are clear from the definition of pressure.

$(\rmnum{3})$  According to $(\rmnum{1})$ and $(\rmnum{2})$, we know $h(f_0,\ldots,f_{m-1})+\inf \varphi\leq P(f_0,\ldots,f_{m-1},\varphi)\leq h(f_0,\ldots,f_{m-1})+\sup \varphi$. Note that $\inf \varphi, \sup \varphi$ are finite since $X$ is compact and $\varphi \in C(X, \mathbb{R})$. Thus, $P(f_0,\ldots,f_{m-1},\varphi)=\infty$ if and only if $h(f_0,\ldots,f_{m-1})=\infty$.

$(\rmnum{4})$ By the inequality $\sup_E ab \leq \sup_E a\sup_E b$, $a,b \geq 0$, for $w \in F_m^{+}$, we obtain

\begin{align*}
 &\quad \frac{P_w(f_0,\ldots,f_{m-1},\varphi,\varepsilon)}{P_w(f_0,\ldots,f_{m-1},\psi,\varepsilon)}\\& = \frac{  \sup \{\sum_{x \in E} e^{(S_w \varphi)(x)} | E ~is~ a~ (w,\varepsilon,f_0,\ldots,f_{m-1})~separated~ subset~ of~ X \}}{\sup\{\sum_{x \in E}e^{(S_w\psi)(x)}| E ~is~ a~ (w,\varepsilon,f_0,\ldots,f_{m-1}) ~separated~ subset ~of~ X \}} \\ & \leq \sup \left \{\frac{\sum_{x \in E} e^{(S_w \varphi)(x)}}{\sum_{x \in E}e^{(S_w\psi)(x)}} \Big | E ~is~ a~ (w,\varepsilon,f_0,\ldots,f_{m-1})~separated~ subset~ of~ X \right \} \\ & \leq \sup \left \{\max_{x \in E} \frac{\sum_{x \in E} e^{(S_w \varphi)(x)}}{\sum_{x \in E}e^{(S_w\psi)(x)}} \Big | E ~is~ a~ (w,\varepsilon,f_0,\ldots,f_{m-1})~separated~ subset~ of~ X \right \} \\ & \leq e^{|w|\|\varphi-\psi\|}.
\end{align*}
 Hence \[P_w(f_0,\ldots,f_{m-1},\varphi,\varepsilon) \leq e^{|w|\|\varphi-\psi\|}P_w(f_0,\ldots,f_{m-1},\psi,\varepsilon).\] Moreover \[P_n(f_0,\ldots,f_{m-1},\varphi,\varepsilon) \leq e^{n\|\varphi-\psi\|}P_n(f_0,\ldots,f_{m-1},\psi,\varepsilon).\]
 This implies \[P(f_0,\ldots,f_{m-1},\varphi,\varepsilon)-P(f_0,\ldots,f_{m-1},\psi,\varepsilon) \leq  \|\varphi-\psi\|.\]
 By the same way, we can show that \[P(f_0,\ldots,f_{m-1},\psi,\varepsilon)-P(f_0,\ldots,f_{m-1},\varphi,\varepsilon) \leq  \|\varphi-\psi\|.\]
 Therefore, \[|P(f_0,\ldots,f_{m-1},\varphi,\varepsilon)-P(f_0,\ldots,f_{m-1},\psi,\varepsilon)| \leq  \|\varphi-\psi\|.\]
 If $P(f_0,\ldots,f_{m-1},\cdot)< \infty$, letting $\varepsilon \rightarrow 0$, we obtain \[|P(f_0,\ldots,f_{m-1},\varphi)-P(f_0,\ldots,f_{m-1},\psi)| \leq  \|\varphi-\psi\|.\]

 $(\rmnum{5})$ For $w \in F_m^{+}$,  by H$\ddot{o}$lder inequality, if $p \in [0,1]$ and $E$ is a finite subset of $X$, we have \[\sum_{x \in E}e^{p(S_w\varphi)(x)+(1-p)(S_w\psi)(x)} \leq \left(\sum_{x \in E}e^{(S_w\varphi)(x)}\right)^p \left(\sum_{x \in E}e^{(S_w\psi)(x)}\right)^{1-p}.\]
 Thus
 \begin{align*}
&\quad P_w(f_0,\ldots,f_{m-1},p\varphi+(1-p)\psi,\varepsilon) \\& \leq (P_w(f_0,\ldots,f_{m-1},\varphi,\varepsilon)^p(P_w(f_0,\ldots,f_{m-1},\psi,\varepsilon))^{1-p}.
\end{align*}
By H$\ddot{o}$lder inequality again
 \begin{align*}
& \quad \sum_{|w|=n}P_w(f_0,\ldots,f_{m-1},p\varphi+(1-p)\psi,\varepsilon) \\ & \leq \sum_{|w|=n}(P_w(f_0,\ldots,f_{m-1},\varphi,\varepsilon)^p(P_w(f_0,\ldots,f_{m-1},\psi,\varepsilon))^{1-p} \\ & \leq \left(\sum_{|w|=n} P_w(f_0,\ldots,f_{m-1},\varphi,\varepsilon)\right)^p \left(\sum_{|w|=n} P_w(f_0,\ldots,f_{m-1},\psi,\varepsilon)\right)^{1-p}.
\end{align*}
Hence
 \begin{align*}
&\quad \frac{1}{m^n} \sum_{|w|=n}P_w(f_0,\ldots,f_{m-1},p\varphi+(1-p)\psi,\varepsilon) \\ & \leq \left(\frac{1}{m^n}\sum_{|w|=n} P_w(f_0,\ldots,f_{m-1},\varphi,\varepsilon)\right)^p \left(\frac{1}{m^n}\sum_{|w|=n} P_w(f_0,\ldots,f_{m-1},\psi,\varepsilon)\right)^{1-p}.
\end{align*}
Thus
 \begin{align*}
&\quad P_n(f_0,\ldots,f_{m-1},p\varphi+(1-p)\psi,\varepsilon) \\ & \leq (P_n(f_0,\ldots,f_{m-1},\varphi,\varepsilon))^p(P_n(f_0,\ldots,f_{m-1},\psi,\varepsilon))^{1-p}.
\end{align*}
Then we obtain
 \begin{align*}
&\quad P(f_0,\ldots,f_{m-1},p\varphi+(1-p)\psi,\varepsilon) \\ & \leq pP(f_0,\ldots,f_{m-1},\varphi,\varepsilon)+(1-p)(P_n(f_0,\ldots,f_{m-1},\psi,\varepsilon).
\end{align*}
If $P(f_0,\ldots,f_{m-1},\cdot)< \infty$, letting $\varepsilon \rightarrow 0$, then
 \begin{align*}
&\quad P(f_0,\ldots,f_{m-1},p\varphi+(1-p)\psi) \\& \leq pP(f_0,\ldots,f_{m-1},\varphi)+(1-p)P_n(f_0,\ldots,f_{m-1},\psi).
\end{align*}

$(\rmnum{6})$ is clear from the definition of topological pressure.

$(\rmnum{7})$  For $w \in F_m^{+}$, note that \[P_w(f_0,\ldots,f_{m-1},\varphi+\psi,\varepsilon) \leq P_w(f_0,\ldots,f_{m-1},\varphi,\varepsilon)P_w(f_0,\ldots,f_{m-1},\psi,\varepsilon).\]
Thus
\begin{align*}
&\quad \frac{1}{m^n}\sum_{|w|=n}P_w(f_0,\ldots,f_{m-1},\varphi+\psi,\varepsilon) \\ & \leq \frac{1}{m^n}\sum_{|w|=n}P_w(f_0,\ldots,f_{m-1},\varphi,\varepsilon)P_w(f_0,\ldots,f_{m-1},\psi,\varepsilon) \\ & \leq m^n \left(\frac{1}{m^n}\sum_{|w|=n}P_w(f_0,\ldots,f_{m-1},\varphi,\varepsilon)\right)\left(\frac{1}{m^n}\sum_{|w|=n}P_w(f_0,\ldots,f_{m-1},\psi,\varepsilon)\right).
\end{align*}
Then
\begin{align*}
P_n(f_0,\ldots,f_{m-1},\varphi+\psi,\varepsilon)  & \leq m^nP_n(f_0,\ldots,f_{m-1},\varphi,\varepsilon)P_n(f_0,\ldots,f_{m-1},\psi,\varepsilon).
\end{align*}
Hence \[P(f_0,\ldots,f_{m-1},\varphi+\psi) \leq P(f_0,\ldots,f_{m-1},\varphi)+P(f_0,\ldots,f_{m-1},\psi)+ \log m.\]

$(\rmnum{8})$ If $a_1,\ldots,a_k$ are positive numbers with $\sum_{i=1}^ka_i=1$ then $\sum_{i=1}^ka_i^c \leq 1$ if $c \geq 1$, and $\sum_{i=1}^ka_i^c \geq 1$ if $c \leq 1$. Therefore, for $w \in F_m^{+} $, if $E$ is a $(w,\varepsilon,f_0,\ldots,f_{m-1})$ separated subset of $X$ we have
\[\sum_{x \in E}e^{c(S_w\varphi)(x)} \leq \left(\sum_{x \in E}e^{(S_w\varphi)(x)}\right)^c~~ if ~c \geq 1\]
and
\[\sum_{x \in E}e^{c(S_w\varphi)(x)} \geq \left(\sum_{x \in E}e^{(S_w\varphi)(x)}\right)^c~~ if ~c \leq 1.\]
Therefore
\[P_w(f_0,\ldots,f_{m-1},c\varphi,\varepsilon) \leq (P_w(f_0,\ldots,f_{m-1},\varphi,\varepsilon))^c~~if~c \geq 1\]
and
\[P_w(f_0,\ldots,f_{m-1},c\varphi,\varepsilon) \geq (P_w(f_0,\ldots,f_{m-1},\varphi,\varepsilon))^c~~if~c \leq 1.\]
\textbf{Case 1}, if $c \geq 1$, then
\begin{align*}
&\quad \frac{1}{m^n}\sum_{|w|=n}P_w(f_0,\ldots,f_{m-1},c\varphi,\varepsilon) \\ & \leq \frac{1}{m^n}\sum_{|w|=n}(P_w(f_0,\ldots,f_{m-1},\varphi,\varepsilon))^c \\ &\leq \frac{1}{m^n}\left(\sum_{|w|=n}P_w(f_0,\ldots,f_{m-1},\varphi,\varepsilon)\right)^c \\ & =(\frac{1}{m^n})^{1-c}\left(\frac{1}{m^n}\sum_{|w|=n}(P_w(f_0,\ldots,f_{m-1},\varphi,\varepsilon))\right)^c.
\end{align*}
\textbf{Case 2}, if $c \leq 1$, then
\begin{align*}
&\quad \frac{1}{m^n}\sum_{|w|=n}P_w(f_0,\ldots,f_{m-1},c\varphi,\varepsilon)\\  & \geq \frac{1}{m^n}\sum_{|w|=n}(P_w(f_0,\ldots,f_{m-1},\varphi,\varepsilon))^c \\ &\geq \frac{1}{m^n}\left(\sum_{|w|=n}P_w(f_0,\ldots,f_{m-1},\varphi,\varepsilon)\right)^c \\ & =(\frac{1}{m^n})^{1-c}\left(\frac{1}{m^n}\sum_{|w|=n}(P_w(f_0,\ldots,f_{m-1},\varphi,\varepsilon))\right)^c
\end{align*}
 which implies the desired result.

$(\rmnum{9})$ Since $-|\varphi| \leq \varphi \leq |\varphi|$, by $(\rmnum{2})$, $P(f_0,\ldots,f_{m-1},-|\varphi|)\leq P(f_0,\ldots,f_{m-1},\varphi) \leq P(f_0,\ldots,f_{m-1},|\varphi|)$. From $(\rmnum{8})$ we have \[-P(f_0,\ldots,f_{m-1},|\varphi|)-2\log m \leq P(f_0,\ldots,f_{m-1},-|\varphi|).\]
Thus,
\[-2\log m-P(f_0,\ldots,f_{m-1},|\varphi|) \leq P(f_0,\ldots,f_{m-1},\varphi) \leq P(f_0,\ldots,f_{m-1},|\varphi|).\]
\end{proof}

\begin{remark}
If $m=1$, then the above results coincide with the results for the topological pressure of a single transformation\cite{WALTER2}.  If the free semigroup generated by $f_0,f_1,\ldots,f_{m-1}$ satisfies the so-called strongly $\delta^{*}$-expansive and $\varphi$ and $\psi$ satisfy the bounded distortion property, Rodrigues and Varandas\cite{RODRIGUES} proved the $(\rmnum{4})$ and $(\rmnum{6})$ in Theorem \ref{thm:properties of pressure1}.
\end{remark}

In order to  prove the partial variational principle, we give the following property of the topological pressure.

\begin{theorem}\label{prop:properties of pressure2}
Let $(X_i,d_i)$ be a compact metric space and let $G_i$ be the set of finite continuous transformations from $X_i$ into itself($i=1,2$), where $G_1=\{f_0,\ldots,f_{m-1}\}$ and $G_2=\{g_0,\ldots,g_{k-1}\}$. We use $G_1\times G_2$ denoting the semigroup acting  on the compact space $X_1\times X_2$ generated by $G_1\times G_2=\{f\times g: f \in G_1, g \in G_2\}:=\{(f \times g)_0,\cdots,(f \times g)_{mk-1}\}$ where $(f\times g)(x_1,x_2)=(f(x_1),g(x_2))$,$\forall f\times g \in G_1\times G_2$ and the metric on space $X_1\times X_2$ is given by $d((x_1,x_2),(y_1,y_2))=\max\{d_1(x_1,y_1),d_2(x_2,y_2)\}$. If $\varphi_i \in C(X_i, \mathbb{R})$, then
\[P(G_1\times G_2,\varphi_1\times \varphi_2) = P(G_1,\varphi_1)+P(G_2,\varphi_2)\]
where $\varphi_1 \times \varphi_2 \in C(X_1 \times X_2, \mathbb{R})$ is defined by $(\varphi_1\times \varphi_2)(x_1,x_2)=\varphi_1(x_1)+\varphi_2(x_2)$.
In particular, for any $n \in \mathbb{N}$, if $X_1=X_2=\cdots=X_n:=X$, then \[P(\overbrace{G\times G\times\cdots\times G}^{n},\overbrace{\varphi\times \varphi\times\cdots\times\varphi}^{n}) = nP(G,\varphi).\]
\end{theorem}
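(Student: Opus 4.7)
The plan is to prove the product formula by exploiting that every word of length $n$ in the $mk$-letter alphabet of $G_1\times G_2$ corresponds bijectively to a pair $(u,v)$ with $u\in F_m^+$, $v\in F_k^+$, $|u|=|v|=n$: writing $w=(i_1,j_1)\cdots(i_n,j_n)$, one sets $u=i_1\cdots i_n$, $v=j_1\cdots j_n$, and $(f\times g)_w=(f_u,g_v)$. The paper's suffix order $w'\leq w$ is compatible with this bijection, since every $w'\leq w$ of length $\ell$ corresponds to the unique pair $(u',v')$ with $u'\leq u$, $v'\leq v$, $|u'|=|v'|=\ell$. Consequently, with $d^{(i)}_\cdot$ denoting the Bowen metric on $X_i$, one obtains
\[d_w\bigl((x_1,x_2),(y_1,y_2)\bigr) = \max\bigl\{d^{(1)}_u(x_1,y_1),\, d^{(2)}_v(x_2,y_2)\bigr\},\]
together with the splitting of the Birkhoff sum
\[(S_w(\varphi_1\times\varphi_2))(x_1,x_2) = (S_u\varphi_1)(x_1) + (S_v\varphi_2)(x_2).\]

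For the upper bound I would use spanning sets. Given a $(u,\varepsilon)$-spanning set $E_1\subset X_1$ and a $(v,\varepsilon)$-spanning set $E_2\subset X_2$, the factored metric makes $E_1\times E_2$ a $(w,\varepsilon)$-spanning set in $X_1\times X_2$, while the factored Birkhoff sum yields
\[\sum_{(x,y)\in E_1\times E_2} e^{(S_w(\varphi_1\times\varphi_2))(x,y)} = \Bigl(\sum_{x\in E_1} e^{(S_u\varphi_1)(x)}\Bigr)\Bigl(\sum_{y\in E_2} e^{(S_v\varphi_2)(y)}\Bigr).\]
Taking infima gives $Q_w^{G_1\times G_2}\leq Q_u^{G_1}\cdot Q_v^{G_2}$; summing over the $(mk)^n=m^nk^n$ length-$n$ words via the bijection $w\leftrightarrow(u,v)$ and dividing by $(mk)^n$ produces
\[Q_n^{G_1\times G_2}(\varphi_1\times\varphi_2,\varepsilon)\leq Q_n^{G_1}(\varphi_1,\varepsilon)\cdot Q_n^{G_2}(\varphi_2,\varepsilon).\]
Applying $\tfrac{1}{n}\log$, using subadditivity of $\limsup$, and letting $\varepsilon\to 0$ in Definition \ref{def:spanning sets2} yields $P(G_1\times G_2,\varphi_1\times\varphi_2)\leq P(G_1,\varphi_1)+P(G_2,\varphi_2)$.

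For the matching lower bound I would use separated sets. If $E_1$ is $(u,\varepsilon)$-separated in $X_1$ and $E_2$ is $(v,\varepsilon)$-separated in $X_2$, a quick case check ($x_1\neq y_1$ versus $x_1=y_1$ with $x_2\neq y_2$) shows that $E_1\times E_2$ is $(w,\varepsilon)$-separated in $X_1\times X_2$, hence $P_w^{G_1\times G_2}\geq P_u^{G_1}\cdot P_v^{G_2}$; averaging over words gives $P_n^{G_1\times G_2}(\varphi_1\times\varphi_2,\varepsilon)\geq P_n^{G_1}(\varphi_1,\varepsilon)\cdot P_n^{G_2}(\varphi_2,\varepsilon)$. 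Taking $\tfrac{1}{n}\log$ and then $\liminf_{n\to\infty}$, using $\liminf(a_n+b_n)\geq \liminf a_n+\liminf b_n$, letting $\varepsilon\to 0$, and invoking the alternative formula $P(G,\varphi)=\lim_{\varepsilon\to 0}\liminf_{n}\tfrac{1}{n}\log P_n(\cdot,\varepsilon)$ from Theorem \ref{thm:open cover2}$(\rmnum{8})$, delivers the reverse inequality. The case of $n$ identical factors then follows by induction on $n$ from the binary product formula. The main obstacle is the bookkeeping in the opening step: one must verify carefully that the bijection $w\leftrightarrow(u,v)$ respects the suffix order so that the Bowen metric and Birkhoff sum split cleanly; once this identification is secured, the remainder is a straightforward product decomposition combined with the passage to the limit.
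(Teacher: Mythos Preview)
Your proposal is correct and follows essentially the same strategy as the paper: the same bijection between words in $F_{mk}^+$ and pairs in $F_m^+\times F_k^+$, spanning sets for the upper bound, separated sets for the lower bound, and an appeal to Theorem~\ref{thm:open cover2}$(\rmnum{8})$ to handle the limit. The only cosmetic difference is that the paper combines one $\limsup$ with one $\liminf$ (using $\limsup(a_n+b_n)\geq\liminf a_n+\limsup b_n$) rather than taking $\liminf$ on all three terms as you do; both variants work.
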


\begin{proof}
For any $w^{(1)}=w_1^{(1)}\cdots w_n^{(1)} \in F_m^{+}$ and  $w^{(2)}=w_1^{(2)}\cdots w_n^{(2)} \in F_k^{+}$, there exists only one $w=w_1\cdots w_n \in F_{mk}^{+}$ such that $(f\times g)_{w_i}=f_{w_i^{(1)}}\times g_{w_i^{(2)}}$. Define a map $h:F_m^{+}\times F_k^{+}\rightarrow F_{mk}^{+}$, such that $h(w^{(1)},w^{(2)})=w$. Then $h$ is a one-to-one correspondence. If $F_i$ is a $(w^{(i)},\varepsilon,f_0,\cdots,f_{m-1})$ spanning set for $X_i$ then $F_1 \times F_2$ is a $(w,\varepsilon,f_0,\cdots,f_{m-1})$ spanning set for $X_1 \times X_2$ with respect to $G_1 \times G_2$. Also

\begin{align*}
&\quad  \sum_{(x_1,x_2) \in F_1 \times F_2}\exp\left(\sum_{w' \leq w}(\varphi_1 \times \varphi_2)(f \times g)_{w'}(x_1,x_2) \right) \\ &= \left(\sum_{x_1 \in F_1}\exp\left(\sum_{w'_1 \leq w^{(1)}}\varphi_1((f)_{w'_1}(x_1)) \right)\right)\left(\sum_{x_2 \in F_2}\exp\left(\sum_{w'_2 \leq w^{(2)}}\varphi_2((g)_{w'_2}(x_2)) \right)\right)
\end{align*}
so that \[Q_w(G_1\times G_2,\varphi_1\times \varphi_2,\varepsilon) \leq Q_{w^{(1)}}(G_1,\varphi_1,\varepsilon)Q_{w^{(2)}}(G_1,\varphi_2,\varepsilon).\]
Then
\begin{align*}
&\quad \frac{1}{(mk)^n}\sum_{|w|=n}Q_w(G_1\times G_2,\varphi_1\times \varphi_2,\varepsilon)\\ & \leq \frac{1}{(mk)^n}\sum_{|w^{(1)}|=n,|w^{(2)}|=n}Q_{w^{(1)}}(G_1,\varphi_1,\varepsilon)Q_{w^{(2)}}(G_2,\varphi_2,\varepsilon) \\ & \leq \frac{1}{m^n}\sum_{|w^{(1)}|=n}Q_{w^{(1)}}(G_1,\varphi_1,\varepsilon)\frac{1}{k^n}\sum_{|w^{(2)}|=n}Q_{w^{(2)}}(G_2,\varphi_2,\varepsilon).
\end{align*}
which implies that \[Q_n(G_1\times G_2,\varphi_1\times \varphi_2,\varepsilon) \leq Q_n(G_1,\varphi_1,\varepsilon)\cdot Q_n(G_2,\varphi_2,\varepsilon).\]
Therefore  \[P(G_1\times G_2,\varphi_1\times \varphi_2) \leq P(G_1,\varphi_1)+P(G_2,\varphi_2).\]

If $E_i$ is a $(w^{(i)},\varepsilon,f_0,\cdots,f_{m-1})$ separated set for $X_i$ then $E_1 \times E_2$ is a

\noindent$(w,\varepsilon,f_0,\cdots,f_{m-1})$ separated set for $X_1 \times X_2$, so that
\[P_w(G_1\times G_2,\varphi_1\times \varphi_2,\varepsilon) \geq P_{w^{(1)}}(G_1,\varphi_1,\varepsilon)\cdot P_{w^{(2)}}(G_2,\varphi_2,\varepsilon).\]
Thus
\[P_n(G_1\times G_2,\varphi_1\times \varphi_2,\varepsilon) \geq P_n(G_1,\varphi_1,\varepsilon)\cdot P_n(G_2,\varphi_2,\varepsilon).\]
Since
\begin{align*}
& \quad \limsup_{n\rightarrow \infty}\frac{1}{n}\log P_n(G_1\times G_2,\varphi_1\times \varphi_2,\varepsilon)\\ & \geq \liminf_{n\rightarrow \infty}\frac{1}{n}\log P_n(G_1,\varphi_1,\varepsilon)+\limsup_{n\rightarrow \infty}\frac{1}{n}\log P_n(G_2,\varphi_2,\varepsilon).
\end{align*}
Theorem \ref{thm:open cover2}$(\rmnum{8})$ gives \[P(G_1\times G_2,\varphi_1\times \varphi_2) \geq P(G_1,\varphi_1)+P(G_2,\varphi_2).\]
\end{proof}

\section{measure-theoretic entropy for a free semigroup action}\label{sec:entropydefinition}
In this section, we introduce the measure-theoretic entropy of a free semigroup action and give some properties. The free semigroup is generated by finite measure-preserving transformations acting on a probability space.

First we give some notions. A partition of a probability space $(X,\mathscr{B},\mu)$ is a disjoint collection of elements of $\mathscr{B}$ whose union is $X$. Let $\xi=\{A_1,\cdots,A_k\}$ be a finite partition of $(X,\mathscr{B},\mu)$. Let $\eta=\{C_1,\cdots,C_l\}$ be another finite partition of $(X,\mathscr{B},\mu)$. The join of $\xi$ and $\eta$ is the partition \[\xi\vee \eta=\{A_i\cap C_j: 1\leq i \leq k,1 \leq j \leq l\}.\] We write $\xi\leq \eta$ to mean that each element of $\xi$ is a union of elements of $\eta$. Under the convention that $0\log0=0$, the entropy of the partition $\xi$ is \[H_{\mu}(\xi)=-\sum_{i=1}^k \mu(A_i) \log \mu(A_i).\] The conditional entropy of $\xi$ relative to $\eta$ is given by \[H_{\mu}(\xi|\eta)=-\sum_{\mu(C_j)\neq 0}\sum_{i=1}^k\mu(A_i\cap C_j)\log \frac{\mu(A_i\cap C_j)}{\mu(C_j)}.\] We denote the set of all finite partitions of $X$ by $\mathcal{L}$, then $\rho(\xi,\eta):=H_{\mu}(\xi|\eta)+H_{\mu}(\eta|\xi)$ is a metric on $\mathcal{L}$.

In order to give the definition of measure-theoretic entropy, we first prove the following lemma.

\begin{lemma}\label{lem:entropy1}
Let $f_0,\ldots,f_{m-1}$ be measure-preserving transformations of a probability space $(X,\mathscr{B},\mu)$, i.e., $\mu$ is $f_i$-invariant, $0\leq i\leq m-1$.  For $w \in F_m^{+}$, if $\xi \in \mathcal{L}$, then
\[\lim_{n\rightarrow \infty}\frac{1}{n}\left[\frac{1}{m^n}\sum_{|w|=n}H_{\mu}(\bigvee_{w' \leq w}f_{w'}^{-1}\xi)\right]\] exists.
\end{lemma}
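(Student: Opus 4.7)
The plan is to apply the subadditivity lemma (Lemma \ref{lem:impor}) to the sequence
\[a_n := \frac{1}{m^n}\sum_{|w|=n}H_{\mu}\!\left(\bigvee_{w' \leq w}f_{w'}^{-1}\xi\right).\]
Since $H_\mu(\cdot) \geq 0$, it will suffice to verify the inequality $a_{n+p} \leq a_n + a_p$ for all $n,p \geq 1$; the existence of $\lim_n a_n/n$ (and its equality with $\inf_n a_n/n$) then follows immediately.

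The key combinatorial observation is that every word $w \in F_m^+$ of length $n+p$ decomposes uniquely as $w = w^{(1)} w^{(2)}$ with $|w^{(1)}| = p$ and $|w^{(2)}| = n$, so $\sum_{|w|=n+p}$ factors as $\sum_{|w^{(1)}|=p}\sum_{|w^{(2)}|=n}$. Moreover, because $w' \leq w$ means $w = w'' w'$, the set $\{w' : w' \leq w\}$ splits into the suffixes of $w^{(2)}$ together with the words of the form $w^{(1)'} w^{(2)}$ for $w^{(1)'} \leq w^{(1)}$. Using $f_{w^{(1)'} w^{(2)}} = f_{w^{(1)'}} f_{w^{(2)}}$, this yields the decomposition
\[\bigvee_{w' \leq w} f_{w'}^{-1}\xi \;=\; \left(\bigvee_{w' \leq w^{(2)}} f_{w'}^{-1}\xi\right) \vee f_{w^{(2)}}^{-1}\!\left(\bigvee_{w^{(1)'} \leq w^{(1)}} f_{w^{(1)'}}^{-1}\xi\right).\]

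Applying the standard subadditivity $H_\mu(\alpha \vee \beta) \leq H_\mu(\alpha) + H_\mu(\beta)$ together with the $\mu$-invariance of each $f_i$ (hence of $f_{w^{(2)}}$, giving $H_\mu(f_{w^{(2)}}^{-1}\eta) = H_\mu(\eta)$ for every partition $\eta$) then produces
\[H_\mu\!\left(\bigvee_{w' \leq w} f_{w'}^{-1}\xi\right) \leq H_\mu\!\left(\bigvee_{w' \leq w^{(2)}} f_{w'}^{-1}\xi\right) + H_\mu\!\left(\bigvee_{w^{(1)'} \leq w^{(1)}} f_{w^{(1)'}}^{-1}\xi\right).\]
Summing over all $w$ with $|w|=n+p$, the first term on the right depends only on $w^{(2)}$ and is therefore counted $m^p$ times, while the second depends only on $w^{(1)}$ and is counted $m^n$ times. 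Dividing through by $m^{n+p}$ produces exactly $a_{n+p} \leq a_n + a_p$, and Lemma \ref{lem:impor} finishes the argument. The only step requiring any real care is the bookkeeping of suffixes when splitting $w = w^{(1)}w^{(2)}$; once that decomposition is in place, the rest is a routine application of the classical entropy inequality and measure invariance.
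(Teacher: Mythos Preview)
Your proposal is correct and follows essentially the same approach as the paper: both reduce to Lemma \ref{lem:impor} by decomposing each word of length $n+p$ into a prefix and suffix, applying the identity $\bigvee_{w' \leq w} f_{w'}^{-1}\xi = \bigl(\bigvee_{w' \leq w^{(2)}} f_{w'}^{-1}\xi\bigr) \vee f_{w^{(2)}}^{-1}\bigl(\bigvee_{w^{(1)'} \leq w^{(1)}} f_{w^{(1)'}}^{-1}\xi\bigr)$, invoking $H_\mu(\alpha\vee\beta)\leq H_\mu(\alpha)+H_\mu(\beta)$ together with measure-preservation, and then counting multiplicities to obtain subadditivity of the averaged sequence. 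The only differences are notational.
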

\begin{proof}
For any $w=i_0\cdots i_{n_1-1}i_{n_1}\cdots i_{n_1+n_2-1}, w^{(1)}=i_0\cdots i_{n_1-1}, w^{(2)}=i_{n_1}\cdots i_{n_1+n_2-1}$, i.e., \[|w|=n_1+n_2,|w^{(1)}|=n_1,|w^{(2)}|=n_2.\]
By Lemma \ref{lem:impor} it suffices to show
\begin{align*}
&\quad \frac{1}{m^{n_1+n_2}}\sum_{|w|=n_1+n_2}H_{\mu}(\bigvee_{w' \leq w}f_{w'}^{-1}\xi)\\& \leq \frac{1}{m^{n_1}}\sum_{|w^{(1)}|=n_1}H_{\mu}(\bigvee_{w' \leq w^{(1)}}f_{w'}^{-1}\xi)+\frac{1}{m^{n_2}}\sum_{|w^{(2)}|=n_2}H_{\mu}(\bigvee_{w' \leq w^{(2)}}f_{w'}^{-1}\xi).
\end{align*}
Since $f_w=f_{i_0}\cdots f_{i_{n_1-1}}f_{i_{n_1}}\cdots f_{i_{n_1+n_2-1}},f_w^{-1}=f_{i_{n_1+n_2-1}}^{-1}\cdots f_{i_{n_1}}^{-1} f_{i_{n_1-1}}^{-1}\cdots f_{i_0}^{-1}$, then we have
\begin{align*}
H_{\mu}(\bigvee_{w' \leq w}f_{w'}^{-1}(\xi))  &=H_{\mu}(\bigvee_{w' \leq w^{(2)}}f_{w'}^{-1}\xi \vee f_{w^{(2)}}^{-1}(\bigvee_{w' \leq w^{(1)}}f_{w'}^{-1}\xi)) \\ & \leq H_{\mu}(\bigvee_{w' \leq w^{(2)}}f_{w'}^{-1}\xi)+ H_{\mu}(f_{w^{(2)}}^{-1}(\bigvee_{w' \leq w^{(1)}}f_{w'}^{-1}\xi)) \\ &=H_{\mu}(\bigvee_{w' \leq w^{(1)}}f_{w'}^{-1}\xi)+ H_{\mu}(\bigvee_{w' \leq w^{(2)}}f_{w'}^{-1}\xi).
\end{align*}
Therefore
\begin{align*}
&\quad \frac{1}{m^{n_1+n_2}}\sum_{|w|=n_1+n_2}H_{\mu}(\bigvee_{w' \leq w}f_{w'}^{-1}(\xi))  \\ &\leq \frac{1}{m^{n_1+n_2}}\sum_{|w|=n_1+n_2}\left(H_{\mu}(\bigvee_{w' \leq w^{(1)}}f_{w'}^{-1}\xi)+ H_{\mu}(\bigvee_{w' \leq w^{(2)}}f_{w'}^{-1}\xi)\right) \\ &=\frac{1}{m^{n_1+n_2}}\left(\sum_{|w^{(1)}|=n_1,|w^{(2)}|=n_2}H_{\mu}(\bigvee_{w' \leq w^{(1)}}f_{w'}^{-1}\xi)+ \sum_{|w^{(1)}|=n_1,|w^{(2)}|=n_2}H_{\mu}(\bigvee_{w' \leq w^{(2)}}f_{w'}^{-1}\xi)\right)\\ &=\frac{1}{m^{n_1+n_2}}\left(m^{n_2}\sum_{|w^{(1)}|=n_1}H_{\mu}(\bigvee_{w' \leq w^{(1)}}f_{w'}^{-1}\xi)+ m^{n_1}\sum_{|w^{(2)}|=n_2}H_{\mu}(\bigvee_{w' \leq w^{(2)}}f_{w'}^{-1}\xi)\right)\\ &=\frac{1}{m^{n_1}}\sum_{|w^{(1)}|=n_1}H_{\mu}(\bigvee_{w' \leq w^{(1)}}f_{w'}^{-1}\xi)+ \frac{1}{m^{n_2}}\sum_{|w^{(2)}|=n_2}H_{\mu}(\bigvee_{w' \leq w^{(2)}}f_{w'}^{-1}\xi).
\end{align*}
\end{proof}

\begin{definition}\label{def:entropy1}
Let $f_0,\ldots,f_{m-1}$ are measure-preserving transformations of a probability space $(X,\mathscr{B},\mu)$.  For $w \in F_m^{+}$, if $\xi \in \mathcal{L}$, denote
\[h_{\mu}(f_0,\ldots,f_{m-1}, \xi)=\lim_{n\rightarrow \infty}\frac{1}{n}\left[\frac{1}{m^n}\sum_{|w|=n}H_{\mu}(\bigvee_{w' \leq w}f_{w'}^{-1}\xi)\right].\]
The measure-theoretic entropy for a free semigroup action is defined by \[h_{\mu}(f_0,\ldots,f_{m-1})=\sup_{\xi \in \mathcal{L}} h_{\mu}(f_0,\ldots,f_{m-1},\xi).\] If we let $G:=\{f_0,\cdots,f_{m-1}\}$, then we also denote $h_{\mu}(f_0,\ldots,f_{m-1})$ by $h_{\mu}(G)$.
\end{definition}

\begin{remark}
(1) Obviously, $h_{\mu}(f_0,\ldots,f_{m-1}) \geq 0$.

(2) If $m=1$, then $h_{\mu}(f_0)$ is the classical measure-theoretic entropy of a single transformation(see, e.g. \cite{WALTER2}).
\end{remark}

In general, the basic properties of the measure-theoretic entropy of finite measure-preserving transformations are summarized below, which are similar to that of the classical measure-theoretic entropy(see e.g. \cite{WALTER2},~chap.4).

\begin{theorem}\label{thm:entropy1}
Let $(X,\mathscr{B},\mu)$ be a probability space and $f_0,\ldots,f_{m-1}$ measure-preserving transformations on $X$ preserving $\mu$. For $w \in F_m^{+}$, if $\xi, \eta \in \mathcal{L}$, then

$(\rmnum{1})$ $h_{\mu}(f_0,\ldots,f_{m-1},\xi) \leq H_{\mu}(\xi)$.

$(\rmnum{2})$ $h_{\mu}(f_0,\ldots,f_{m-1},\xi\vee \eta) \leq h_{\mu}(f_0,\ldots,f_{m-1},\xi)+h_{\mu}(f_0,\ldots,f_{m-1},\eta)$.

$(\rmnum{3})$ If $\xi \leq \eta$, then $h_{\mu}(f_0,\ldots,f_{m-1},\xi) \leq h_{\mu}(f_0,\ldots,f_{m-1},\eta)$.

$(\rmnum{4})$ $h_{\mu}(f_0,\ldots,f_{m-1},\xi) \leq h_{\mu}(f_0,\ldots,f_{m-1},\eta)+H_{\mu}(\xi | \eta)$.

$(\rmnum{5})$ $|h_{\mu}(f_0,\ldots,f_{m-1},\xi)-h_{\mu}(f_0,\ldots,f_{m-1},\eta)| \leq \rho(\xi,\eta)$. Hence the map $h_{\mu}(f_0,\ldots,f_{m-1},\cdot):\mathcal{L}\rightarrow \mathbb{R^{+}}\cup \{0\}$ is continuous.
\end{theorem}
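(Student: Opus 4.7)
The strategy is to push the classical single--transformation entropy inequalities (as in Walters, Chapter~4) inside the averaged sum $\frac{1}{m^n}\sum_{|w|=n} H_\mu(\bigvee_{w'\leq w} f_{w'}^{-1}\xi)$ and then divide by $n$ and pass to the limit using Lemma~\ref{lem:entropy1}. The key identities I will use freely from the classical theory are: (a) subadditivity $H_\mu(\alpha\vee\beta)\leq H_\mu(\alpha)+H_\mu(\beta)$; (b) monotonicity $\alpha\leq\beta \Rightarrow H_\mu(\alpha)\leq H_\mu(\beta)$; (c) conditional decomposition $H_\mu(\alpha)\leq H_\mu(\beta)+H_\mu(\alpha\mid\beta)$; (d) $H_\mu(\alpha\mid\gamma)\leq H_\mu(\alpha\mid\beta)$ when $\beta\leq\gamma$; and (e) $f_{w'}$--invariance of $\mu$, which gives $H_\mu(f_{w'}^{-1}\xi)=H_\mu(\xi)$ and $H_\mu(f_{w'}^{-1}\xi\mid f_{w'}^{-1}\eta)=H_\mu(\xi\mid\eta)$.

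For (i), iterate (a) along $w'\leq w$ and use (e) to get $H_\mu(\bigvee_{w'\leq w} f_{w'}^{-1}\xi)\leq |w|H_\mu(\xi)$; average over $|w|=n$, divide by $n$, take the limit. For (iii), note that $\xi\leq \eta$ gives $f_{w'}^{-1}\xi \leq f_{w'}^{-1}\eta$ for every $w'$, hence $\bigvee_{w'\leq w} f_{w'}^{-1}\xi \leq \bigvee_{w'\leq w} f_{w'}^{-1}\eta$, and (b) plus averaging yield the conclusion. For (ii), use the identity $\bigvee_{w'\leq w} f_{w'}^{-1}(\xi\vee\eta) = \bigl(\bigvee_{w'\leq w} f_{w'}^{-1}\xi\bigr)\vee \bigl(\bigvee_{w'\leq w} f_{w'}^{-1}\eta\bigr)$, then apply (a), average, divide by $n$, and let $n\to\infty$; the two resulting averaged sums converge, by Lemma~\ref{lem:entropy1}, to $h_\mu(f_0,\ldots,f_{m-1},\xi)$ and $h_\mu(f_0,\ldots,f_{m-1},\eta)$ respectively.

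The main point is (iv). Write $\Xi_w := \bigvee_{w'\leq w} f_{w'}^{-1}\xi$ and $H_w := \bigvee_{w'\leq w} f_{w'}^{-1}\eta$. Apply (c) to get $H_\mu(\Xi_w)\leq H_\mu(H_w)+H_\mu(\Xi_w\mid H_w)$. To bound the conditional term, use subadditivity of conditional entropy together with (d) and (e):
\[
H_\mu(\Xi_w\mid H_w)\leq \sum_{w'\leq w} H_\mu(f_{w'}^{-1}\xi\mid H_w)\leq \sum_{w'\leq w} H_\mu(f_{w'}^{-1}\xi\mid f_{w'}^{-1}\eta)=|w|H_\mu(\xi\mid\eta),
\]
where I used that $f_{w'}^{-1}\eta\leq H_w$ in order to refine the conditioning. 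Averaging over $|w|=n$, dividing by $n$, and letting $n\to\infty$ gives (iv). Finally, (v) follows by applying (iv) symmetrically: $h_\mu(f_0,\ldots,f_{m-1},\xi)-h_\mu(f_0,\ldots,f_{m-1},\eta)\leq H_\mu(\xi\mid\eta)$ and the reverse with the roles swapped, so the absolute difference is bounded by $H_\mu(\xi\mid\eta)+H_\mu(\eta\mid\xi)=\rho(\xi,\eta)$, yielding continuity of $h_\mu(f_0,\ldots,f_{m-1},\cdot)$ on $(\mathcal{L},\rho)$.

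The only step that requires genuine care is item (iv), specifically verifying that the chain of conditional-entropy inequalities behaves well when one conditions on the larger join $H_w$ rather than on $f_{w'}^{-1}\eta$ alone; the invocation of (d) is what makes the $f_{w'}$--invariance of $\mu$ usable term by term. Once this is in place, averaging over $|w|=n$ and using Lemma~\ref{lem:entropy1} makes the passage to the limit entirely routine; no step requires a non-classical argument beyond the word-averaging bookkeeping already present in Section~4.
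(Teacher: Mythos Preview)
Your proposal is correct and follows essentially the same route as the paper's own proof: in each item you push the classical single-map inequalities (subadditivity, monotonicity, the conditional decomposition, and measure-preservation identities) inside $H_\mu\bigl(\bigvee_{w'\leq w} f_{w'}^{-1}\xi\bigr)$, then average over $|w|=n$, divide by $n$, and take the limit via Lemma~\ref{lem:entropy1}. The only cosmetic difference is in (v), where the paper records the sharper intermediate bound $\max\bigl(H_\mu(\xi\mid\eta),H_\mu(\eta\mid\xi)\bigr)$ before passing to $\rho(\xi,\eta)$, whereas you go directly to the sum; both yield the stated inequality.
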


\begin{proof}
We can get the desired results easily from the following facts respectively.
$(\rmnum{1})$
\begin{align*}
 \frac{1}{n}\left[\frac{1}{m^n}\sum_{|w|=n}H_{\mu}(\bigvee_{w' \leq w}f_{w'}^{-1}\xi)\right] & \leq \frac{1}{n}\left[\frac{1}{m^n}\sum_{|w|=n}\sum_{w' \leq w}H_{\mu}(f_{w'}^{-1}\xi)\right] \\ & =\frac{1}{n}\left[\frac{1}{m^n}\sum_{|w|=n}\sum_{w' \leq w}H_{\mu}(\xi)\right] \\ &=H_{\mu}(\xi).
\end{align*}
$(\rmnum{2})$
\begin{align*}
& \quad \frac{1}{m^n}\sum_{|w|=n}H_{\mu}(\bigvee_{w' \leq w}f_{w'}^{-1}(\xi \vee \eta)) \\ &=\frac{1}{m^n}\sum_{|w|=n}H_{\mu}(\bigvee_{w' \leq w}f_{w'}^{-1}\xi \vee \bigvee_{w' \leq w}f_{w'}^{-1}\eta) \\ & \leq \frac{1}{m^n}\sum_{|w|=n}H_{\mu}(\bigvee_{w' \leq w}f_{w'}^{-1}\xi) + \frac{1}{m^n}\sum_{|w|=n}H_{\mu}(\bigvee_{w' \leq w}f_{w'}^{-1}\eta).
\end{align*}
$(\rmnum{3})$ If $\xi \leq \eta$, then for any $w \in F_m^{+}$, \[\bigvee_{w' \leq w}f_{w'}^{-1}\xi \leq \bigvee_{w' \leq w}f_{w'}^{-1}\eta.\] Then \[H_{\mu}(\bigvee_{w' \leq w}f_{w'}^{-1}\xi) \leq H_{\mu}(\bigvee_{w' \leq w}f_{w'}^{-1}\eta),\]
hence, \[\frac{1}{m^n}\sum_{|w|=n}H_{\mu}(\bigvee_{w' \leq w}f_{w'}^{-1}\xi) \leq \frac{1}{m^n}\sum_{|w|=n}H_{\mu}(\bigvee_{w' \leq w}f_{w'}^{-1}\eta).\]
$(\rmnum{4})$
\begin{align*}
& \quad \frac{1}{m^n}\sum_{|w|=n}H_{\mu}(\bigvee_{w' \leq w}f_{w'}^{-1}\xi) \\ & \leq \frac{1}{m^n}\sum_{|w|=n}H_{\mu}(\bigvee_{w' \leq w}f_{w'}^{-1}\xi \vee \bigvee_{w' \leq w}f_{w'}^{-1}\eta)\\ & = \frac{1}{m^n}\sum_{|w|=n}H_{\mu}(\bigvee_{w' \leq w}f_{w'}^{-1}\eta) + \frac{1}{m^n}\sum_{|w|=n}H_{\mu}(\bigvee_{w' \leq w}f_{w'}^{-1}\xi | \bigvee_{w' \leq w}f_{w'}^{-1}\eta).
\end{align*}
For any $w \in F_m^{+},|w|=n$,
\begin{align*}
H_{\mu}(\bigvee_{w' \leq w}f_{w'}^{-1}\xi | \bigvee_{w' \leq w}f_{w'}^{-1}\eta) & \leq
\sum_{w' \leq w}H_{\mu}(f_{w'}^{-1}\xi |\bigvee_{w' \leq w}f_{w'}^{-1}\eta) \\ & \leq
\sum_{w' \leq w}H_{\mu}(f_{w'}^{-1}\xi |f_{w'}^{-1}\eta) \\ &=nH_{\mu}(\xi|\eta).
\end{align*}
Thus \[\frac{1}{m^n}\sum_{|w|=n}H_{\mu}(\bigvee_{w' \leq w}f_{w'}^{-1}\xi) \leq \frac{1}{m^n}\sum_{|w|=n}H_{\mu}(\bigvee_{w' \leq w}f_{w'}^{-1}\eta)+nH_{\mu}(\xi | \eta).\]
$(\rmnum{5})$ By $(\rmnum{4})$
\begin{align*}
 |h_{\mu}(f_0,\ldots,f_{m-1},\xi)-h_{\mu}(f_0,\ldots,f_{m-1},\eta)| & \leq \max(H_{\mu}(\xi | \eta),H_{\mu}(\eta | \xi)) \\ & \leq \rho(\xi,\eta).
\end{align*}
\end{proof}

The following two technical lemmas, which will be useful in what follows, can be found in \cite{WALTER2}.
\begin{lemma}\label{lem:entropy1}
Let $r \geq 1$ be an integer. Then for all $\varepsilon>0$ there exists $\delta>0$ such that if $\xi=\{A_1,\ldots,A_r\}$ and $\eta=\{B_1,\ldots,B_r\}$ are two finite partitions satisfying the inequality $\sum_{i=1}^r\mu(A_i\triangle B_i)< \delta$, it holds that $\rho(\xi,\eta)<\varepsilon$.
\end{lemma}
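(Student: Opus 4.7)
The plan is to bound the two conditional entropies in $\rho(\xi,\eta)=H_\mu(\xi|\eta)+H_\mu(\eta|\xi)$ separately. By the symmetric roles of $\xi$ and $\eta$, it suffices to bound $H_\mu(\xi|\eta)$; the other summand is handled by interchanging the partitions. Write $t_{ij}:=\mu(A_i\cap B_j)/\mu(B_j)$ whenever $\mu(B_j)>0$, and let $\phi(t):=-t\log t$ on $[0,1]$ with $\phi(0)=0$; note that $\phi$ is continuous and concave (since $\phi''(t)=-1/t<0$). Then
\[
H_\mu(\xi|\eta)=\sum_{j:\,\mu(B_j)>0}\mu(B_j)\sum_{i=1}^r \phi(t_{ij}).
\]
The crucial geometric observation, exploiting that the indices of $\xi$ and $\eta$ are matched, is that for $i\neq j$ one has $A_i\cap B_j\subseteq B_j\setminus A_j$ by disjointness of the $A_i$, hence
\[
\sum_{i\neq j}\mu(A_i\cap B_j)=\mu(B_j\setminus A_j)\leq \mu(A_j\triangle B_j),
\]
so that $\sum_j \mu(B_j\setminus A_j)<\delta$.

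The main obstacle is that $s_j:=1-t_{jj}=\mu(B_j\setminus A_j)/\mu(B_j)$ is \emph{not} uniformly small in $j$: it can be close to $1$ when $\mu(B_j)$ is tiny. The standard remedy is a level-set split. Fix a threshold $\eta_0$ and set $J_1:=\{j:\mu(B_j)\geq \eta_0\}$ and $J_2$ its complement. For $j\in J_1$ one has $s_j\leq \delta/\eta_0$; concavity of $\phi$ and Jensen's inequality applied to the $(r-1)$ off-diagonal terms give
\[
\sum_{i=1}^r \phi(t_{ij})\leq \phi(1-s_j)+(r-1)\phi\!\left(\frac{s_j}{r-1}\right),
\]
which is continuous in $s_j$ and vanishes as $s_j\to 0$ by $\phi(0)=\phi(1)=0$. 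For $j\in J_2$, I use the crude bound $\sum_i\phi(t_{ij})\leq \log r$ (maximum entropy on $r$ atoms) combined with $|J_2|\leq r$, giving a contribution to $H_\mu(\xi|\eta)$ of at most $r\eta_0\log r$.

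Assembling the two estimates and choosing $\eta_0=\sqrt{\delta}$ yields
\[
H_\mu(\xi|\eta)\leq \phi(1-\sqrt{\delta})+(r-1)\phi\!\left(\frac{\sqrt{\delta}}{r-1}\right)+r\sqrt{\delta}\,\log r,
\]
a quantity depending only on $r$ and $\delta$ that tends to $0$ as $\delta\to 0^+$. Given $\varepsilon>0$, I therefore pick $\delta$ small enough to force the right-hand side below $\varepsilon/2$; the symmetric argument bounds $H_\mu(\eta|\xi)$ by $\varepsilon/2$ as well, so $\rho(\xi,\eta)<\varepsilon$. The only nonroutine ingredient is the thresholding trick, which neutralizes $\eta$-atoms of very small mass; the remainder is continuity of $\phi$ at the endpoints $0$ and $1$ together with the disjointness identity for off-diagonal intersections.
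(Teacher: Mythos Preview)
Your argument is correct. The thresholding at $\eta_0=\sqrt{\delta}$ handles the only genuine difficulty, namely that $s_j=1-t_{jj}$ need not be small when $\mu(B_j)$ is tiny, and the concavity/Jensen step for the off-diagonal terms is clean. One small point worth making explicit: to pass from $s_j\leq\sqrt{\delta}$ to the displayed bound you are using that $g(s):=\phi(1-s)+(r-1)\phi(s/(r-1))$ is increasing on $[0,(r-1)/r]$ (a one-line derivative check), or else replacing $g(\sqrt{\delta})$ by $\sup_{0\leq s\leq\sqrt{\delta}}g(s)$, which still tends to $0$ by continuity and $g(0)=0$. Either way the conclusion stands. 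The edge case $r=1$ is trivial since both partitions equal $\{X\}$.

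As for comparison with the paper: the paper does not prove this lemma at all. It is stated (together with the next lemma) under the sentence ``The following two technical lemmas, which will be useful in what follows, can be found in \cite{WALTER2},'' i.e.\ it is quoted from Walters' textbook without argument. Your proof is therefore strictly more than the paper provides. The route you take---direct estimation of $H_\mu(\xi\mid\eta)$ via the diagonal/off-diagonal split and a mass-threshold on the $\eta$-atoms---is in the same spirit as the standard textbook proof, so there is no essential methodological divergence to discuss.
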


\begin{lemma}\label{lem:entropy2}
Let $\mathscr{B}_0$ be an algebra such that the $\sigma$-algebra generated by $\mathscr{B}_0$, which is denoted $\sigma(\mathscr{B}_0)$, is $\mathscr{B}$. Let $\xi$ be a finite partition of $X$ containing elements from $\mathscr{B}$. Then, for all $\varepsilon>0$ there exists a finite partition $\eta$ containing elements from $\mathscr{B}_0$ and holding $\rho(\xi,\eta)<\varepsilon$.
\end{lemma}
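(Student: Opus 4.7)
My plan is to reduce the lemma to the preceding approximation result (Lemma \ref{lem:entropy1}), which converts closeness in the metric $\rho$ into closeness in total symmetric-difference measure. Given $\varepsilon>0$, I would first apply that lemma to produce $\delta>0$ such that any finite partition $\eta=\{B_1,\ldots,B_r\}$ satisfying $\sum_{i=1}^r \mu(A_i\triangle B_i)<\delta$ automatically obeys $\rho(\xi,\eta)<\varepsilon$, where $\xi=\{A_1,\ldots,A_r\}$ is the given partition. The task then reduces to building a $\mathscr{B}_0$-partition $\eta$ with arbitrarily small total symmetric difference from $\xi$.

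The next step is the standard measure-theoretic approximation of a $\sigma$-algebra by a generating algebra. Because $\mathscr{B}_0$ is an algebra with $\sigma(\mathscr{B}_0)=\mathscr{B}$ and $\mu$ is a probability measure, the collection of sets in $\mathscr{B}$ approximable in symmetric difference by $\mathscr{B}_0$-elements is readily seen to be closed under complements, finite unions, and monotone limits, and therefore is a $\sigma$-algebra containing $\mathscr{B}_0$; by minimality it equals $\mathscr{B}$. Thus for any $\delta'>0$ I can find $C_i\in\mathscr{B}_0$ with $\mu(A_i\triangle C_i)<\delta'$ for each $i=1,\ldots,r$, postponing the choice of $\delta'$ to the end.

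The $C_i$ need not be pairwise disjoint nor cover $X$, so I would disjointify: set $B_1=C_1$, $B_i=C_i\setminus\bigcup_{j<i}C_j$ for $2\le i\le r-1$, and $B_r=X\setminus\bigcup_{i<r}B_i$. Each $B_i$ lies in $\mathscr{B}_0$ because $\mathscr{B}_0$ is closed under finite unions, intersections, and complements. The key estimate is that, because the $A_i$ are pairwise disjoint, each overlap $A_i\cap C_j$ with $j\neq i$ is contained in $A_j\triangle C_j$. A short set-theoretic calculation then yields $\mu(A_i\triangle B_i)\le r\delta'$ for $i<r$, and the last atom is controlled via $A_r\triangle B_r\subset\bigcup_{i<r}(A_i\triangle B_i)$. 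Summing gives $\sum_i\mu(A_i\triangle B_i)\le K_r\delta'$ for a constant $K_r$ depending only on $r$ (e.g.\ $K_r=2r^2$).

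Choosing $\delta'<\delta/K_r$ at the outset produces the desired $\eta$ and hence $\rho(\xi,\eta)<\varepsilon$ by Lemma \ref{lem:entropy1}. The main obstacle I foresee is the disjointification bookkeeping in the third step — one must verify that the operations converting the $C_i$ into a partition do not inflate the approximation. The point that resolves this is the observation that near-disjoint sets have small overlaps, which allows a single uniform threshold $\delta'$ to handle all atoms simultaneously.
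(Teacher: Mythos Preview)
Your argument is correct and is essentially the standard proof of this approximation lemma as given in Walters~\cite{WALTER2}; the paper itself does not supply a proof but merely cites that reference. The reduction via Lemma~\ref{lem:entropy1}, the algebra-approximation step, and the disjointification with the $K_r\delta'$ bookkeeping are exactly the classical route.
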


\begin{theorem}\label{thm:entropy2}
Let $(X,\mathscr{B},\mu)$ be a probability space and $\mathscr{B}_0$  an algebra such that the $\sigma$-algebra generated by $\mathscr{B}_0$(denoted by $\sigma(\mathscr{B}_0)$) satisfies $\sigma(\mathscr{B}_0)=\mathscr{B}$. Let $\mathcal{L}_0$ be the set of finite partitions of $X$ containing elements from $\mathscr{B}_0$. Then \[h_{\mu}(f_0,\ldots,f_{m-1})=\sup\{h_{\mu}(f_0,\ldots,f_{m-1},\xi):\xi \in \mathcal{L}_0\}.\]
\end{theorem}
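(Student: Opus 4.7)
The plan is to prove the nontrivial inequality $h_\mu(f_0,\ldots,f_{m-1}) \leq \sup\{h_\mu(f_0,\ldots,f_{m-1},\xi):\xi \in \mathcal{L}_0\}$, the reverse being obvious because $\mathcal{L}_0 \subseteq \mathcal{L}$ and the definition of $h_\mu(f_0,\ldots,f_{m-1})$ is a supremum over all of $\mathcal{L}$. The whole argument is a density-plus-continuity argument in the $\rho$-metric on $\mathcal{L}$, which is exactly the situation already engineered by Lemmas \ref{lem:entropy1} and \ref{lem:entropy2} together with the Lipschitz estimate in Theorem \ref{thm:entropy1}($\rmnum{5}$).

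First I would fix an arbitrary $\xi \in \mathcal{L}$ and an arbitrary $\varepsilon>0$. Since $\sigma(\mathscr{B}_0)=\mathscr{B}$, Lemma \ref{lem:entropy2} supplies a partition $\eta \in \mathcal{L}_0$ (of the same cardinality as $\xi$, after a standard relabelling argument) with $\rho(\xi,\eta)<\varepsilon$. The approximation step is precisely where the algebra-generating hypothesis on $\mathscr{B}_0$ is consumed: elements of $\mathscr{B}$ are approximated in measure by elements of $\mathscr{B}_0$, and Lemma \ref{lem:entropy1} turns such a set-wise approximation into a $\rho$-approximation of partitions.

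Next I would invoke Theorem \ref{thm:entropy1}($\rmnum{5}$), which states $|h_{\mu}(f_0,\ldots,f_{m-1},\xi)-h_{\mu}(f_0,\ldots,f_{m-1},\eta)| \leq \rho(\xi,\eta)$, to conclude
\[
h_{\mu}(f_0,\ldots,f_{m-1},\xi) \leq h_{\mu}(f_0,\ldots,f_{m-1},\eta) + \varepsilon \leq \sup_{\eta \in \mathcal{L}_0} h_{\mu}(f_0,\ldots,f_{m-1},\eta) + \varepsilon.
\]
Letting $\varepsilon \downarrow 0$ and then taking the supremum over $\xi \in \mathcal{L}$ gives the desired inequality, and combined with the trivial direction this proves the theorem.

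There is essentially no obstacle beyond bookkeeping: the only subtlety is making sure that the $\eta \in \mathcal{L}_0$ produced by Lemma \ref{lem:entropy2} has the same number of atoms as $\xi$ so that Lemma \ref{lem:entropy1} literally applies (this is standard — if $\xi=\{A_1,\ldots,A_r\}$, one looks for $\eta=\{B_1,\ldots,B_r\}$ with $\sum_i \mu(A_i \triangle B_i)$ small, which is how Lemma \ref{lem:entropy2} is customarily stated and used). No new feature of the free semigroup action enters the proof, because all the semigroup-specific content has already been packaged into the continuity statement Theorem \ref{thm:entropy1}($\rmnum{5}$); the argument is identical in structure to the classical single-transformation version found in, e.g., Walters \cite{WALTER2}.
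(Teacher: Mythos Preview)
Your proposal is correct and follows essentially the same approach as the paper. The only cosmetic difference is that the paper invokes Theorem~\ref{thm:entropy1}($\rmnum{4}$) (the one-sided bound $h_{\mu}(f_0,\ldots,f_{m-1},\xi) \leq h_{\mu}(f_0,\ldots,f_{m-1},\eta)+H_{\mu}(\xi|\eta)$) rather than the two-sided Lipschitz estimate~($\rmnum{5}$), but since ($\rmnum{5}$) is an immediate consequence of ($\rmnum{4}$) and Lemma~\ref{lem:entropy2} already delivers $\rho(\xi,\eta)<\varepsilon$, the two arguments are interchangeable.
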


\begin{proof}
By Lemma \ref{lem:entropy2}, given any $\varepsilon>0$ and $\eta \in \mathcal{L}$ there exists a finite partition $\xi_{\varepsilon} \in \mathcal{L}_0$ such that $H_{\mu}(\eta|\xi_{\varepsilon})<\varepsilon$. Then by Theorem \ref{thm:entropy1}$(\rmnum{4})$ we have
\begin{align*}
 h_{\mu}(f_0,\ldots,f_{m-1},\eta) &\leq h_{\mu}(f_0,\ldots,f_{m-1},\xi_{\varepsilon})+H_{\mu}(\eta | \xi_{\varepsilon}) \\ &\leq h_{\mu}(f_0,\ldots,f_{m-1},\xi_{\varepsilon})+\varepsilon.
\end{align*}
So \[h_{\mu}(f_0,\ldots,f_{m-1},\eta) \leq \varepsilon+\sup\{h_{\mu}(f_0,\ldots,f_{m-1},\xi):\xi \in \mathcal{L}_0\}.\]
Since $\varepsilon$ was arbitrary it follows that \[h_{\mu}(f_0,\ldots,f_{m-1}) \leq \sup\{h_{\mu}(f_0,\ldots,f_{m-1},\xi):\xi \in \mathcal{L}_0\}.\]
The reverse inequality is obvious, and so the proof ends.
\end{proof}

In order to prove the partial variational principle, we give the following property of the measure-theoretic entropy.

\begin{theorem}\label{prop:entropy3}
Let $(X_1,\mathscr{B}_1,\mu_1)$ and $(X_2,\mathscr{B}_2,\mu_2)$ be two probability spaces and let $f_i:X_1\rightarrow X_1$, $g_j:X_2\rightarrow X_2$ be measure-preserving, where $f_i \in G_1:=\{f_0,\ldots,f_{m-1}\}$ and $g_j \in G_2:=\{g_0,\ldots,g_{k-1}\}$. Denote $G_1\times G_2=\{f\times g: f \in G_1, g \in G_2\}:=\{(f \times g)_0,\cdots,(f \times g)_{mk-1}\}$, where $(f\times g)(x_1,x_2)=(f(x_1),g(x_2))$,$\forall f\times g \in G_1\times G_2$. Then

$(\rmnum{1})$ $h_{\mu_1 \times \mu_2}(G_1\times G_2) \leq h_{\mu_1}(G_1)+h_{\mu_2}(G_2)$.

$(\rmnum{2})$ If $(X_i,\mathscr{B}_i,\mu_i)=(X,\mathscr{B},\mu),G_i=G,i=1,2$, then $h_{\mu \times \mu}(G\times G) = 2h_{\mu}(G)$. In particular, for any finite number $n \in \mathbb{N}$ \[h_{\tiny{\overbrace{(\mu \times \mu)\times\ldots\times(\mu \times \mu)}^{2^n}}}(\overbrace{(G\times G)\times\ldots\times(G\times G)}^{2^n}) = 2^{n+1}h_{\mu}(G).\]
\end{theorem}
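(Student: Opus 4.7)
The plan is to reduce the product entropy computation to product partitions, using the bijection $h: F_m^+ \times F_k^+ \to F_{mk}^+$ already set up in Proposition~\ref{prop:properties of pressure2} together with the generating-algebra version of the entropy (Theorem~\ref{thm:entropy2}).

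First I would establish the product-partition identity. Given finite partitions $\xi_1$ of $X_1$ and $\xi_2$ of $X_2$, form $\xi_1 \times \xi_2 := \{A \times B : A \in \xi_1,\, B \in \xi_2\}$. Since $(f \times g)^{-1}(A \times B) = f^{-1}A \times g^{-1}B$ and $(\alpha_1 \times \beta_1) \vee (\alpha_2 \times \beta_2) = (\alpha_1 \vee \alpha_2) \times (\beta_1 \vee \beta_2)$, for each $w \in F_{mk}^+$ with $|w|=n$ and the corresponding pair $(w^{(1)},w^{(2)}) = h^{-1}(w)$, the refinement $\bigvee_{w'\le w}(f\times g)_{w'}^{-1}(\xi_1\times\xi_2)$ factors as a product of the analogous joins for $\xi_1$ and $\xi_2$. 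Because $(\mu_1\times\mu_2)(A\times B) = \mu_1(A)\mu_2(B)$, the entropy of a product partition splits as a sum, so
\[
H_{\mu_1\times\mu_2}\Bigl(\bigvee_{w'\le w}(f\times g)_{w'}^{-1}(\xi_1\times\xi_2)\Bigr)
= H_{\mu_1}\Bigl(\bigvee_{w'^{(1)}\le w^{(1)}} f_{w'^{(1)}}^{-1}\xi_1\Bigr) + H_{\mu_2}\Bigl(\bigvee_{w'^{(2)}\le w^{(2)}} g_{w'^{(2)}}^{-1}\xi_2\Bigr).
\]
Summing over $|w|=n$, which under $h$ corresponds to summing over all $(w^{(1)},w^{(2)})$ with $|w^{(1)}|=|w^{(2)}|=n$, and normalising by $(mk)^n = m^n k^n$, each summand collapses to the appropriate average; dividing by $n$ and letting $n\to\infty$ yields
\[
h_{\mu_1\times\mu_2}(G_1\times G_2,\xi_1\times\xi_2) = h_{\mu_1}(G_1,\xi_1) + h_{\mu_2}(G_2,\xi_2).
\]

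For part~$(\mathrm{i})$, I would then apply Theorem~\ref{thm:entropy2} to the algebra $\mathscr{B}_0$ consisting of finite disjoint unions of rectangles $A\times B$ with $A\in\mathscr{B}_1$, $B\in\mathscr{B}_2$; this algebra generates $\mathscr{B}_1\otimes\mathscr{B}_2$. Any $\xi \in \mathcal{L}_0$ is refined by a product partition $\xi_1\times\xi_2$ whose coordinate partitions are obtained from the rectangles appearing in $\xi$. By Theorem~\ref{thm:entropy1}$(\mathrm{iii})$ and the identity just established,
\[
h_{\mu_1\times\mu_2}(G_1\times G_2,\xi) \le h_{\mu_1\times\mu_2}(G_1\times G_2,\xi_1\times\xi_2) = h_{\mu_1}(G_1,\xi_1) + h_{\mu_2}(G_2,\xi_2) \le h_{\mu_1}(G_1) + h_{\mu_2}(G_2),
\]
and taking the supremum over $\xi\in\mathcal{L}_0$ gives~$(\mathrm{i})$.

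For part~$(\mathrm{ii})$, the upper bound $h_{\mu\times\mu}(G\times G)\le 2h_\mu(G)$ is immediate from~$(\mathrm{i})$. The matching lower bound comes from restricting to product partitions: for any $\xi_1,\xi_2\in\mathcal{L}(X)$ the product identity gives $h_{\mu\times\mu}(G\times G,\xi_1\times\xi_2) = h_\mu(G,\xi_1) + h_\mu(G,\xi_2)$, and taking independent suprema over $\xi_1$ and $\xi_2$ yields $h_{\mu\times\mu}(G\times G)\ge 2h_\mu(G)$. The iterated formula follows by induction: assuming the claim for $2^n$ factors, apply the equality once more to the pair $\bigl((\mu\times\mu)^{2^n},(\mu\times\mu)^{2^n}\bigr)$ to double the entropy, advancing from $2^{n+1}h_\mu(G)$ to $2^{n+2}h_\mu(G)$.

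The only non-routine step is verifying that the algebra of finite disjoint unions of rectangles generates the product $\sigma$-algebra (standard measure theory) and that a partition drawn from this algebra is indeed refined by a product partition; the bookkeeping on $F_{mk}^+$ is already handled by the bijection $h$ from Proposition~\ref{prop:properties of pressure2}, so no new combinatorial obstacle arises.
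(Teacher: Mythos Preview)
Your proposal is correct and follows essentially the same strategy as the paper: establish the additivity identity $H_{\mu_1\times\mu_2}\bigl(\bigvee_{w'\le w}(f\times g)_{w'}^{-1}(\xi_1\times\xi_2)\bigr)=H_{\mu_1}\bigl(\bigvee f_{w'^{(1)}}^{-1}\xi_1\bigr)+H_{\mu_2}\bigl(\bigvee g_{w'^{(2)}}^{-1}\xi_2\bigr)$ via the bijection $F_m^+\times F_k^+\to F_{mk}^+$, average and pass to the limit, then invoke Theorem~\ref{thm:entropy2} on the rectangle algebra for~(i) and combine with the lower bound from product partitions for~(ii). Your presentation is slightly cleaner than the paper's in two places: you correctly record the product-partition entropy as an \emph{equality} (the paper writes $\le$ at that step, which is needlessly weak since both limits exist), and in~(ii) you take independent suprema over $\xi_1,\xi_2$ rather than the diagonal $\xi\times\xi$, which sidesteps the paper's pointwise claim ``$=2H_\mu(\cdots)$'' that is only true after averaging over $|w|=n$.
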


\begin{proof}
$(\rmnum{1})$ Consider two measurable partitions, $\xi_1$ of $X_1$ and $\xi_2$ of $X_2$. Then $\xi_1\times \xi_2$ is a measurable partition of $X_1 \times X_2$. Hence, for $w^{(1)}=w_1^{(1)}\cdots w_n^{(1)} \in F_m^{+}$ and  $w^{(2)}=w_1^{(2)}\cdots w_n^{(2)} \in F_k^{+}$, there exists only one $w=w_1\cdots w_n \in F_{mk}^{+}$ such that
$(f\times g)_{w_i}=f_{w_i^{(1)}}\times g_{w_i^{(2)}}$. Hence
\begin{equation}\label{eq:rell}
H_{\mu_1 \times \mu_2}\left(\bigvee_{w' \leq w}(f\times g)_{w'}^{-1}(\xi_1 \times \xi_2)\right)=H_{\mu_1}(\bigvee_{w'_1 \leq w^{(1)}}f_{w'_1}^{-1}\xi_1)+H_{\mu_2}(\bigvee_{w'_2 \leq w^{(2)}}g_{w'_2}^{-1}\xi_2).
\end{equation}

Since the left side of the above formula has $(mk)^n$  terms for all $|w|=n$, where the two terms of the right hand side of the above formula have  $k^n$ and $m^n$ respectively  duplicates for every $|w^{(1)}|=|w^{(2)}|=n$, respectively, then we have
\begin{align*}
 &\quad h_{\mu_1 \times \mu_2}(G_1\times G_2, \xi_1 \times \xi_2) \\&=\lim_{n \rightarrow \infty}\frac{1}{n}\left[\frac{1}{(mk)^n}\sum_{|w|=n}H_{\mu_1 \times \mu_2}\left(\bigvee_{w' \leq w}(f\times g)_{w'}^{-1}(\xi_1 \times \xi_2)\right) \right]\\ &=\lim_{n \rightarrow \infty}\frac{1}{n}\left[\frac{1}{(mk)^n}\sum_{|w|=n}\left(H_{\mu_1}(\bigvee_{w'_1 \leq w^{(1)}}f_{w'_1}^{-1}\xi_1)+H_{\mu_2}(\bigvee_{w'_2 \leq w^{(2)}}g_{w'_2}^{-1}\xi_2) \right)\right]
  \\& = \lim_{n \rightarrow \infty}\frac{1}{n}\left[\frac{1}{(mk)^n}\left(\sum_{|w^{(1)}|=n,|w^{(2)}|=n}H_{\mu_1}(\bigvee_{w'_1 \leq w^{(1)}}f_{w'_1}^{-1}\xi_1)+\sum_{|w^{(1)}|=n,|w^{(2)}|=n}H_{\mu_2}(\bigvee_{w'_2 \leq w^{(2)}}g_{w'_2}^{-1}\xi_2) \right)\right]\\&= \lim_{n \rightarrow \infty}\frac{1}{n}\left[\frac{1}{(mk)^n}\left(k^n\sum_{|w^{(1)}|=n}H_{\mu_1}(\bigvee_{w'_1 \leq w^{(1)}}f_{w'_1}^{-1}\xi_1)+m^n\sum_{|w^{(2)}|=n}H_{\mu_2}(\bigvee_{w'_2 \leq w^{(2)}}g_{w'_2}^{-1}\xi_2) \right)\right] \\ & \leq h_{\mu_1}(G_1,\xi_1)+h_{\mu_2}(G_2,\xi_2) \\ &\leq h_{\mu_1}(G_1)+h_{\mu_2}(G_2).
\end{align*}

Since the algebra $\mathscr{B}_1 \times \mathscr{B}_2$ generates the product $\sigma$-algebra $\mathscr{B}$, by Theorem \ref{thm:entropy2}, it follows that
\begin{align*}
 h_{\mu_1 \times \mu_2}(G_1\times G_2) &\leq \sup_{\xi_1 \times \xi_2}h_{\mu_1 \times \mu_2}(G_1\times G_2, \xi_1 \times \xi_2)\\ &\leq h_{\mu_1}(G_1)+h_{\mu_2}(G_2).
\end{align*}
and the part $(\rmnum{1})$ follows.

$(\rmnum{2})$ Consider $\xi$ a finite partition of $X$ and From the formula (\ref{eq:rell}) we have
\begin{align*}
H_{\mu \times \mu}\left(\bigvee_{w' \leq w}(f\times g)_{w'}^{-1}(\xi \times \xi)\right)=2H_{\mu}\left(\bigvee_{w_1^{'} \leq w_1}f_{w_1^{'}}^{-1}\xi\right).
\end{align*}
Therefore
\begin{align*}
 &\quad h_{\mu \times \mu}(G\times G, \xi \times \xi) \\&=\lim_{n \rightarrow \infty}\frac{1}{n}\left[\frac{1}{m^{2n}}\sum_{|w|=n}H_{\mu \times \mu}\left(\bigvee_{w' \leq w}(f\times g)_{w'}^{-1}(\xi \times \xi)\right) \right]\\ & = \lim_{n \rightarrow \infty}\frac{1}{n}\left[\frac{2}{m^{2n}}\left(m^n\sum_{|w_1|=n}H_{\mu}(\bigvee_{w_1^{'} \leq w_1}f_{w_1^{'}}^{-1}\xi)\right)\right] \\ & =2h_{\mu}(G,\xi).
\end{align*}
Taking the supremum over all the finite partitions
\begin{align*}
 \sup_{\xi}h_{\mu \times \mu}(G\times G, \xi \times \xi)=2\sup_{\xi}h_{\mu}(G,\xi)=2 h_{\mu}(G).
\end{align*}
Then \[h_{\mu \times \mu}(G\times G) \geq 2 h_{\mu}(G),\] and using the part $(\rmnum{1})$ the proof ends.
\end{proof}

\section{The proofs of the main results}\label{sec:main results}
In this section, we give the proofs of Theorem \ref{thm:properties of pressure3} and Theorem \ref{thm:relation1}.

First, we link free semigroup actions and skew-product transformations by the similar way from Bufetov\cite{BUfE}.

Let $(X,d)$ be a compact metric space. Suppose that a free semigroup with $m$ generators acts on $X$; denote the maps corresponding to the generators by $f_0,\cdots,f_{m-1}$; we assume that these maps are continuous.

To this action, we assign the following skew-product transformation. Its base is $\Sigma_{m}$, its fiber is $X$, and the maps $F:\Sigma_{m}\times X \rightarrow \Sigma_{m}\times X$ and $g:\Sigma_{m}\times X \rightarrow \mathbb{R}$ are defined by the formula\[F(\omega,x)=(\sigma_{m}\omega,f_{\omega_0}(x))\] and \[g(\omega,x)=c+\varphi(x).\] Here $f_{\omega_0}$ stands for $f_0$ if $\omega_0=0$, and for $f_1$ if $\omega_0=1$, and so on;  $c$ is a constant number and $\varphi \in C(X, \mathbb{R})$. Obviously, $g \in C(\Sigma_{m}\times X, \mathbb{R})$. For $w=i_1\cdots i_k \in F_m^{+}$, denote $\overline{w}=i_k\cdots i_1$. Let $\omega=(\cdots,\omega_{-1},\omega_0,\omega_1,\cdots)\in \Sigma_m$, then
\begin{align*}
F^n(\omega,x)&=(\sigma_{m}^n\omega,f_{\omega_{n-1}}f_{\omega_{n-2}}\cdots f_{\omega_0}(x))\\&=(\sigma_{m}^n\omega,f_{\overline{\omega|_{[0,n-1]}}}(x)).
\end{align*}
Moreover, $S_ng(\omega,x)=nc+S_{\overline{\omega|_{[0,n-1]}}}\varphi(x)$.
Let the metric on $\Sigma_{m}\times X $ be given by the formula \[d((\omega,x),(\omega',x'))=\max(d(\omega,\omega'),d(x,x')).\]

Before proving Theorem \ref{thm:properties of pressure3}, we give the following two lemmas.

\begin{lemma}\label{lem:topological2}
For any natural $n$ and $0<\varepsilon<\frac{1}{2}$ \[P_n(F,g,\varepsilon) \geq e^{nc}m^nP_n(f_0,\cdots,f_{m-1},\varphi,\varepsilon).\]
\end{lemma}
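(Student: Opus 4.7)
The plan is to construct an $(n, \varepsilon, F)$-separated subset of $\Sigma_m \times X$ whose weighted sum $\sum e^{S_n g}$ realizes the claimed lower bound. For each word $w = w_1 w_2 \cdots w_n \in F_m^+$ of length $n$ and each $\eta > 0$, I would first fix a $(w, \varepsilon, f_0, \ldots, f_{m-1})$-separated set $E_w \subset X$ with $\sum_{x \in E_w} e^{(S_w \varphi)(x)}$ within $\eta / m^n$ of $P_w(f_0, \ldots, f_{m-1}, \varphi, \varepsilon)$. Next, associate to $w$ the sequence $\omega^{(w)} \in \Sigma_m$ defined by $\omega^{(w)}_j = w_{n-j}$ for $0 \leq j \leq n-1$, so that $\omega^{(w)}|_{[0,n-1]} = \overline{w}$, with all other coordinates set to $0$. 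The candidate separated set is then
\[ E := \{(\omega^{(w)}, x) : w \in F_m^+,\; |w|=n,\; x \in E_w\} \subset \Sigma_m \times X. \]

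To verify that $E$ is $(n, \varepsilon, F)$-separated, I would consider two cases for distinct points $(\omega^{(w)}, x), (\omega^{(w')}, x') \in E$. If $w = w'$ but $x \neq x'$, then since $E_w$ is $(w, \varepsilon)$-separated there is some suffix $w'' \leq w$ with $d(f_{w''}(x), f_{w''}(x')) \geq \varepsilon$; because $\omega^{(w)}|_{[0, n-1]} = \overline{w}$, the second coordinate of $F^{|w''|}(\omega^{(w)}, y)$ is precisely $f_{w''}(y)$, so separation is witnessed at iterate $k = |w''| \in \{0, \ldots, n-1\}$. If instead $w \neq w'$, then $\omega^{(w)}$ and $\omega^{(w')}$ agree outside $[0, n-1]$ but differ at some $j^* \in \{0, \ldots, n-1\}$; shifting by $j^*$ brings the discrepancy to position $0$, so $d(\sigma_m^{j^*} \omega^{(w)}, \sigma_m^{j^*} \omega^{(w')}) = 1 > \varepsilon$, which forces $d(F^{j^*}(\omega^{(w)}, x), F^{j^*}(\omega^{(w')}, x')) \geq \varepsilon$. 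The hypothesis $\varepsilon < 1/2$ is more than sufficient to absorb both cases.

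Finally, applying the identity $S_n g(\omega^{(w)}, x) = nc + (S_w \varphi)(x)$ recorded in the skew-product setup, the sum over $E$ factors as $e^{nc}\sum_{|w|=n}\sum_{x \in E_w} e^{(S_w \varphi)(x)}$, which by our choice of $E_w$ is at least $e^{nc}\bigl(\sum_{|w|=n} P_w(f_0, \ldots, f_{m-1}, \varphi, \varepsilon) - \eta\bigr) = e^{nc}\bigl(m^n P_n(f_0, \ldots, f_{m-1}, \varphi, \varepsilon) - \eta\bigr)$. Since $P_n(F, g, \varepsilon)$ dominates $\sum_{(\omega,x) \in E} e^{S_n g(\omega, x)}$ by definition and $\eta > 0$ is arbitrary, the claimed inequality follows. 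The main bookkeeping obstacle is the reversal convention $\omega^{(w)}|_{[0, n-1]} = \overline{w}$: it is essential so that the second-coordinate $F$-orbit of $(\omega^{(w)}, x)$ reproduces exactly the suffix orbit $\{f_{w''}(x) : w'' \leq w\}$ driving the Bowen metric $d_w$; with that correspondence correctly set up, both the separation verification and the sum computation are routine.
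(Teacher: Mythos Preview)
Your proposal is correct and follows essentially the same strategy as the paper: build an $(n,\varepsilon,F)$-separated set in $\Sigma_m\times X$ by pairing, for each length-$n$ word, a sequence in $\Sigma_m$ realizing that word on the window $[0,n-1]$ with a $(\cdot,\varepsilon)$-separated set in $X$, then sum. The only cosmetic difference is the indexing: the paper lets $\omega(i)|_{[0,n-1]}=w_i$ and takes $(\overline{w_i},\varepsilon)$-separated sets in $X$, whereas you set $\omega^{(w)}|_{[0,n-1]}=\overline{w}$ and take $(w,\varepsilon)$-separated sets; these are equivalent under the bijection $w\leftrightarrow\overline{w}$ on words of length $n$. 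Your explicit $\eta$-approximation to handle the supremum and your case-by-case verification of separation are in fact more careful than the paper's presentation, which asserts both points without detail.
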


\begin{proof}
Let $N=m^n$. There are $N$ distinct words of length $n$ in $F_m^{+}$. Denote these words by $w_1,\cdots,w_N$. For any $i=1,\cdots,N$, let $\omega(i) \in \Sigma_m$ be an arbitrary sequence such that $\omega(i)|_{[0,n-1]}=w_i$. Obviously, for $0<\varepsilon<\frac{1}{2}$, the sequences $\omega(i), i=1,\cdots,N$, form a $(n,\varepsilon,\sigma_m)$ separated subset of $\Sigma_m$.

Let $N_i=N(\overline{w}_i,\varepsilon,f_0,\cdots,f_{m-1})$, and let the points $x_1^i,\cdots,x_{N_i}^i$ form a $(\overline{w}_i,\varepsilon,f_0,\cdots,f_{m-1})$ separating subset of $X$. Then the points \[(\omega(i),x_j^i) \in \Sigma_{m}\times X, ~~ i=1,\cdots,N~~j=1,\cdots,N_i,\] form a $(n,\varepsilon,F)$ separating subset of $\Sigma_{m}\times X$. Hence we have
\begin{align*}
 P_n(F,g,\varepsilon) & \geq \sum_{(\omega,x)\in \{(\omega(i),x_j^i):i=1,\cdots,N~~j=1,\cdots,N_i\}}e^{S_ng(\omega,x)} \\ &= \sum_{(\omega,x)\in \{(\omega(i),x_j^i):i=1,\cdots,N~~j=1,\cdots,N_i\}}e^{nc+S_{\overline{w}_i}\varphi(x)} \\ & =e^{nc}\sum_{w_i \in \{w_1,\cdots,w_N\}}\sum_{x\in \{x_j^i:~~j=1,\cdots,N_i\}}e^{S_{\overline{w}_i}\varphi(x)} .
\end{align*}
Hence \[P_n(F,g,\varepsilon) \geq e^{nc}m^nP_n(f_0,\cdots,f_{m-1},\varphi,\varepsilon).\]
\end{proof}

\begin{lemma}\label{lem:topological3}
For any natural $n$ and $\varepsilon>0$ \[Q_n(F,g,\varepsilon) \leq K(\varepsilon)e^{nc}m^nQ_n(f_0,\cdots,f_{m-1},\varphi,\varepsilon).\] where $K(\varepsilon)$ is a positive constant that depends only on $\varepsilon$.
\end{lemma}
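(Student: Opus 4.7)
The strategy parallels that of Lemma \ref{lem:topological2}: I would build an explicit $(n,\varepsilon,F)$-spanning subset of $\Sigma_m\times X$ by combining a controllable spanning set in the base $\Sigma_m$ with near-optimal $(w,\varepsilon,f_0,\ldots,f_{m-1})$-spanning subsets of $X$, one for each word $w\in F_m^+$ of length $n$, and then bound the weighted sum $\sum e^{(S_ng)(\cdot)}$ on this combined set.

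For the base, choose $N\in\mathbb{N}$ with $2^{-N}<\varepsilon$. Two sequences in $\Sigma_m$ that agree on the index block $[-N,N]$ are within distance $\varepsilon$, so two sequences that agree on $[-N,n+N-1]$ are $(n,\varepsilon,\sigma_m)$-close. Fix a set $\Omega\subset\Sigma_m$ consisting of one representative from each equivalence class under agreement on $[-N,n+N-1]$; then $|\Omega|=m^{n+2N}$ and, crucially, for every word $w$ of length $n$ the number of $\omega\in\Omega$ with $\overline{\omega|_{[0,n-1]}}=w$ is exactly $m^{2N}$. This combinatorial factor is what becomes $K(\varepsilon)=m^{2N}$.

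For the fibers, fix $\delta>0$ and, for each word $w$ with $|w|=n$, choose a $(w,\varepsilon,f_0,\ldots,f_{m-1})$-spanning set $F_w\subset X$ with $\sum_{y\in F_w}e^{(S_w\varphi)(y)}\leq Q_w(f_0,\ldots,f_{m-1},\varphi,\varepsilon)+\delta$. Set $w(\omega):=\overline{\omega|_{[0,n-1]}}$ and define $E:=\{(\omega,y):\omega\in\Omega,\ y\in F_{w(\omega)}\}\subset\Sigma_m\times X$. The key claim to verify is that $E$ is an $(n,\varepsilon,F)$-spanning subset: given $(\omega,x)$, choose $\omega^\ast\in\Omega$ agreeing with $\omega$ on $[-N,n+N-1]$ (so that $w(\omega^\ast)=w(\omega)$ and the fiber dynamics along $F^0,\ldots,F^{n-1}$ match), then pick $y\in F_{w(\omega)}$ with $d_{w(\omega)}(x,y)<\varepsilon$; for each $0\leq j\leq n-1$ the base distance $d(\sigma_m^j\omega,\sigma_m^j\omega^\ast)<\varepsilon$ by our choice of $N$, and the fiber distance corresponds to $f_{w'}$ for a suffix $w'$ of $w(\omega)$ (with the $j=0$ case given by the identity), all of which are controlled by $d_{w(\omega)}(x,y)$.

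Finally, using $(S_ng)(\omega,y)=nc+(S_{w(\omega)}\varphi)(y)$,
\begin{align*}
Q_n(F,g,\varepsilon)&\leq\sum_{(\omega,y)\in E}e^{(S_ng)(\omega,y)}=e^{nc}\sum_{\omega\in\Omega}\sum_{y\in F_{w(\omega)}}e^{(S_{w(\omega)}\varphi)(y)}\\
&\leq e^{nc}\sum_{\omega\in\Omega}\bigl(Q_{w(\omega)}(f_0,\ldots,f_{m-1},\varphi,\varepsilon)+\delta\bigr).
\end{align*}
By the multiplicity count above, $\sum_{\omega\in\Omega}Q_{w(\omega)}(\cdots)=m^{2N}\sum_{|w|=n}Q_w(\cdots)=m^{2N}\cdot m^n\cdot Q_n(f_0,\ldots,f_{m-1},\varphi,\varepsilon)$. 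Letting $\delta\to 0$ yields the desired inequality with $K(\varepsilon)=m^{2N}$. The main obstacle is the bookkeeping in the spanning verification above: ensuring that the $n$ fiber iterates $F^0,\ldots,F^{n-1}$ match the correct collection of suffixes of $w(\omega)$ controlled by $d_{w(\omega)}(x,y)$. If the convention for $d_w$ does not automatically control the $j=0$ fiber distance $d(x,y)$, one replaces each $F_w$ by its intersection with (resp.\ union with the appropriate restrictions of) a fixed $\varepsilon$-net of $X$ of cardinality depending only on $\varepsilon$, which multiplies $K(\varepsilon)$ by a harmless constant depending only on $\varepsilon$.
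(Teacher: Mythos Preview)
Your proposal is correct and follows essentially the same construction as the paper's proof: build an $(n,\varepsilon,\sigma_m)$-spanning set of $\Sigma_m$ from one representative per cylinder on the block $[-C(\varepsilon),n+C(\varepsilon)-1]$ (your $N$ is the paper's $C(\varepsilon)$), pair each representative $\omega$ with a near-optimal $(\overline{\omega|_{[0,n-1]}},\varepsilon)$-spanning set in the fiber, and use the $m^{2C(\varepsilon)}$-fold multiplicity of the restriction $\omega\mapsto\omega|_{[0,n-1]}$ to recover $K(\varepsilon)=m^{2C(\varepsilon)}$. Your explicit $\delta$-approximation of $Q_w$ is in fact slightly more careful than the paper, which writes down minimal-cardinality spanning sets $B_i$ and then passes tacitly to the infimum defining $Q_w$; your worry about the $j=0$ fiber distance is a legitimate convention check but, as you note, costs at most an $\varepsilon$-net factor in $K(\varepsilon)$.
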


\begin{proof}
Let $C(\varepsilon)$ be an arbitrary positive integer such that $2^{-C(\varepsilon)}<\frac{\varepsilon}{100}$. Let $N=m^{n+2C(\varepsilon)}$. There are $N$ distinct words of length $n+2C(\varepsilon)$ in $F_m^{+}$. Denote these words by $w_1,\cdots,w_N$. For any $i=1,\cdots,N$, let $\omega(i) \in \Sigma_m$ be an arbitrary sequence such that $\omega(i)|_{[-C(\varepsilon),n+C(\varepsilon)-1]}=w_i$. Obviously, for $0<\varepsilon<\frac{1}{2}$, the sequences $\omega(i), i=1,\cdots,N$, form a $(n,\varepsilon,\sigma_m)$ spanning subset of $\Sigma_m$. Denote \[w'_i=\omega(i)|_{[0,n-1]},~~ B_i=B(\overline{w}'_i,\varepsilon,f_0,\cdots,f_{m-1})\] and assume that the points $x_1^i,\cdots,x_{B_i}^i$ form a $(\overline{w}'_i,\varepsilon,f_0,\cdots,f_{m-1})$ spanning subset of $X$. Then the points \[(\omega(i),x_j^i) \in \Sigma_m \times X, ~~ i=1,\cdots,N~~j=1,\cdots,B_i,\] form a $(n,\varepsilon,F)$ spanning subset of $\Sigma_{m}\times X$. Hence we have

\begin{align*}
 Q_n(F,g,\varepsilon) & \leq \sum_{(\omega,x)\in \{(\omega(i),x_j^i):i=1,\cdots,N~~j=1,\cdots,B_i\}}e^{S_ng(\omega,x)} \\ &\leq K(\varepsilon)\sum_{|w'_i|=n,~~ x \in \{x_j^i:j=1,\cdots,B_i\}}e^{nc+S_{\overline{w}'_i}\varphi(x)} \\ & =K(\varepsilon)e^{nc}\sum_{|w'_i|=n }\sum_{x\in \{x_j^i:~~j=1,\cdots,B_i\}}e^{S_{\overline{w}'_i}\varphi(x)}
\end{align*}
where $K(\varepsilon)$ is a positive constant that depends only on $\varepsilon$.
Hence \[Q_n(F,g,\varepsilon) \leq K(\varepsilon)e^{nc}m^nQ_n(f_0,\cdots,f_{m-1},\varphi,\varepsilon).\]

\end{proof}

\textbf{Proof of the Theorem \ref{thm:properties of pressure3}.}

From Lemma \ref{lem:topological2} we have \[P_n(F,g,\varepsilon) \geq e^{nc}m^nP_n(f_0,\cdots,f_{m-1},\varphi,\varepsilon), \] whence, taking logarithms and limits, we obtain that \[P(F,g) \geq c+\log m+P(f_0,\cdots,f_{m-1},\varphi).\]
In the same way, from Lemma \ref{lem:topological3}, we have \[Q_n(F,g,\varepsilon) \leq K(\varepsilon)e^{nc}m^nQ_n(f_0,\cdots,f_{m-1},\varphi,\varepsilon).\] whence \[P(F,g) \leq c+\log m+P(f_0,\cdots,f_{m-1},\varphi).\]
and the proof is complete. \qed

In the following, we prove Theorem \ref{thm:relation1}, i.e., the partial variational principle for a free semigroup action. First we give the definition of invariant measure for finite continuous maps.

Let $X$ be a compact space. Consider a finite continuous maps $f_0,\ldots,f_{m-1}$, denote by $\mathscr{M}(X)$ the set of all the probability measures on $(X, \mathscr{B}(X))$, where $\mathscr{B}(X)$ denotes the Borel $\sigma$-algebra of $X$. Then the set of invariant measures for $f_0,\ldots,f_{m-1}$, $\mathscr{M}(X,f_0,\ldots,f_{m-1})$ is the set of fixed points of the map $\tilde{f}_i: \mathscr{M}(X) \rightarrow \mathscr{M}(X)$ defined by $ \tilde{f}_i(\mu(A))=\mu(f_i^{-1}A)$ for $i=0,\ldots,m-1$. That is, $\mu \in \mathscr{M}(X,f_0,\ldots,f_{m-1})$ if and only if $\mu(f_i^{-1}A)=\mu(A)$ for all $A \in \mathscr{B}(X)$ and all $i=0,\ldots,m-1$. Let's remark that the set of invariant measures for the classical compact systems and the systems defined on the abelian semigroup $G$ are always non-empty. However, the following example\cite{CANOVAS} shows that $\mathscr{M}(X,f_0,\cdots,f_{m-1})$ can be empty.

\begin{example}
Let $f_0(x)=1$ for all $x \in X:=[0,1]$ and $f_1$ is the standard tent map $f_1(x)=1-|2x-1|$. The set of invariant measures $\mathscr{M}(X,f_0,f_1)$ is the set of fixed points of the maps $\tilde{f}_0$  and $\tilde{f}_1$ from $\mathscr{M}(X)$ into itself. $\tilde{f}_0$ has only a fixed point $\delta_0$, the probabilistic atomic measure such that $\delta_0(\{1\})=1$, but this measure is not a fixed point of $\tilde{f}_1$ and so $\mathscr{M}(X,f_0,f_1)$ is empty.
\end{example}

Moreover, we provide an example of nonabelian semigroup $H$ with $H$-invariant measure.

\begin{example}
Let $A$ and $B$ be the endomorphisms on 2-dimensional torus $\mathbb{T}^2$ induced by the matrices
\[
\left(\begin{array}{cccc}
    1 & 2 \\
    -1 & 4
\end{array}\right)
and
\left(\begin{array}{cccc}
    1 & -1 \\
    -1 & -3
\end{array}\right)
\]
respectively. Let $H$ be the semigroup generated by $A$ and $B$. Obviously, $H$ is a nonabelian semigroup. Let $\mu$ be the Haar measure defined on $\mathbb{T}^2$, then we have $\mu \in \mathscr{M}(\mathbb{T}^2,A,B)$, i.e., $\mathscr{M}(\mathbb{T}^2,A,B)\neq \emptyset$.
\end{example}

Hence we assume that the set of invariant measures $\mathscr{M}(X,f_0,\ldots,f_{m-1})$ is not empty(for instance, when $f_0,\ldots,f_{m-1}$ contains two commuting maps \cite{HU}). Studying the above mentioned relationship makes sense.
We start with the following lemma, which will be useful in what follows, can be found in \cite{WALTER2}.

\begin{lemma}\label{lem:relation1}
Let $a_1,\ldots,a_k$ be given real numbers. If $p_i \geq 0$ and $\sum_{i=1}^kp_i=1$ then \[\sum_{i=1}^kp_i(a_i-\log p_i)\leq \log(\sum_{i=1}^ke^{a_i}).\] and equality holds if and only if \[p_i=\frac{e^{a_i}}{\sum_{j=1}^ke^{a_j}}\]
\end{lemma}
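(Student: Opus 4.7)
The plan is to prove this classical inequality (it is a form of the Gibbs variational principle for finite probability vectors) by comparing $(p_i)$ with the canonical distribution $q_i := e^{a_i}/\sum_{j=1}^k e^{a_j}$ and then invoking the concavity of the logarithm (Jensen's inequality), or equivalently the nonnegativity of the Kullback--Leibler divergence.

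First, I would set $Z := \sum_{j=1}^k e^{a_j}$ and $q_i := e^{a_i}/Z$, so that $q_i>0$, $\sum_i q_i = 1$, and $a_i = \log q_i + \log Z$. Substituting this into the left-hand side gives the algebraic rewriting
\[
\sum_{i=1}^k p_i(a_i - \log p_i) \;=\; \sum_{i=1}^k p_i\bigl(\log q_i - \log p_i\bigr) + \log Z \;=\; -\sum_{i=1}^k p_i\log\frac{p_i}{q_i} + \log\Bigl(\sum_{j=1}^k e^{a_j}\Bigr),
\]
under the standing convention $0\log 0 = 0$ (and with the convention $0 \log(0/q_i) = 0$, which is consistent since $q_i>0$). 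Thus the desired inequality reduces to the Gibbs inequality
\[
\sum_{i=1}^k p_i\log\frac{p_i}{q_i} \;\geq\; 0,
\]
with equality characterized in the standard way.

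To establish this, I would apply Jensen's inequality to the strictly concave function $\log$ on the points $q_i/p_i$ (summing only over indices with $p_i>0$, so there is no division by zero):
\[
\sum_{p_i>0} p_i \log\frac{q_i}{p_i} \;\leq\; \log\Bigl(\sum_{p_i>0} p_i\cdot\frac{q_i}{p_i}\Bigr) \;=\; \log\Bigl(\sum_{p_i>0} q_i\Bigr) \;\leq\; \log 1 \;=\; 0.
\]
This gives the inequality. For the equality clause, strict concavity of $\log$ forces all the ratios $q_i/p_i$ (over the support of $p$) to be equal to some common value $\lambda$; combined with $\sum_{p_i>0} q_i = 1$ in the equality case, one gets $\lambda = 1$, i.e.\ $p_i = q_i$ for every $i$ in the support, and since $\sum p_i = \sum q_i = 1$ this forces $p_i = q_i$ for all $i$.

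The only genuinely delicate point is bookkeeping with the $p_i = 0$ convention: one must check that indices with $p_i=0$ contribute nothing to either side, and that the step $\sum_{p_i>0} q_i \leq 1$ is an equality precisely when $p$ and $q$ have the same support, so that the full equality case matches the statement $p_i = e^{a_i}/\sum_j e^{a_j}$. Aside from this, the argument is a direct three-line computation once the substitution $a_i = \log q_i + \log Z$ is made; I do not anticipate any real obstacle.
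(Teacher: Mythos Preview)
Your proof is correct. The paper does not actually prove this lemma; it simply states that the result ``can be found in \cite{WALTER2}'' and moves on. Your argument via the substitution $a_i=\log q_i+\log Z$ followed by the nonnegativity of the Kullback--Leibler divergence (Jensen applied to $\log$) is the standard proof, and your handling of the equality case---noting that $\sum_{p_i>0}q_i\leq 1$ is strict unless every $p_i>0$, and then strict concavity forces all ratios $q_i/p_i$ equal---is complete. There is nothing to compare against in the paper itself.
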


\begin{lemma}\label{lem:relation2}
Let $(X,d)$ be a compact metric space and let $f_i:X\rightarrow X$(i=0,\ldots,m-1) be finite continuous maps such that $\mathscr{M}(X,f_0,\ldots,f_{m-1})\neq \emptyset$ and let $\varphi \in C(X,\mathbb{R})$. Then
 \[\sup \left\{h_{\mu}(f_0,\ldots,f_{m-1})+\int \varphi d\mu \Big| \mu \in \mathscr{M}(X,f_0,\ldots,f_{m-1})\right\} \leq \log 2+P(f_0,\ldots,f_{m-1},\varphi).\]
\end{lemma}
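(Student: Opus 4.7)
Fix $\mu \in \mathscr{M}(X,f_0,\ldots,f_{m-1})$ and a finite measurable partition $\xi=\{A_1,\ldots,A_k\}$ of $X$; it suffices to prove the inequality with $h_\mu(f_0,\ldots,f_{m-1},\xi)$ in place of the supremum, then take the supremum over $\xi$. By standard regularity of $\mu$ combined with Lemma \ref{lem:entropy1}, I would approximate $\xi$ by a partition $\eta=\{B_0,B_1,\ldots,B_k\}$ where $B_1,\ldots,B_k$ are pairwise disjoint compact sets with $B_i\subseteq A_i$ and $B_0:=X\setminus\bigcup_{i\geq 1}B_i$ has arbitrarily small $\mu$-measure. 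Then $H_\mu(\xi\mid\eta)$ is as small as desired, so by Theorem \ref{thm:entropy1}(iv) it is enough to bound $h_\mu(f_0,\ldots,f_{m-1},\eta)+\int\varphi\,d\mu$. Set $\delta:=\min_{1\leq i<j\leq k}d(B_i,B_j)>0$.

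For each $w\in F_m^{+}$ with $|w|=n$, write $\mathcal{P}_w:=\bigvee_{w'\leq w}f_{w'}^{-1}\eta$ and, for every nonempty $P\in\mathcal{P}_w$, pick a point $x(P)\in\overline{P}$ maximizing $S_w\varphi$. Apply Lemma \ref{lem:relation1} with $p_P=\mu(P)$ and $a_P=S_w\varphi(x(P))$ to obtain
\[
H_\mu(\mathcal{P}_w)+\sum_{P}\mu(P)\,S_w\varphi(x(P))\ \leq\ \log\sum_{P}e^{S_w\varphi(x(P))}.
\]
Using invariance of $\mu$ under each $f_i$, one has $\int S_w\varphi\,d\mu=n\int\varphi\,d\mu$, and by making the diameters of the $B_i$ ($i\geq 1$) small and $\mu(B_0)$ small, the middle term can be replaced by $n\int\varphi\,d\mu$ up to an error controlled by $\|\varphi\|$ and $\mu(B_0)$ (via uniform continuity of $\varphi$).

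The main geometric step is bounding the right-hand side in terms of $P_w(f_0,\ldots,f_{m-1},\varphi,\delta)$. The observation is that at each index $w'\leq w$, a point $y\in X$ can lie in the closure of at most two of the sets in $\eta$: at most one $\overline{B_j}$ with $j\geq 1$ (by the disjointness and closedness of $B_1,\ldots,B_k$) together with possibly $\overline{B_0}$. Hence every point of $X$ lies in at most $2^{n}$ closures $\overline{P}$ with $P\in\mathcal{P}_w$, and the collection $\{x(P)\}$ can be grouped so that after quotienting by this multiplicity one obtains a $(w,\delta,f_0,\ldots,f_{m-1})$-separated set, giving
\[
\sum_{P}e^{S_w\varphi(x(P))}\ \leq\ 2^{n}\,P_w(f_0,\ldots,f_{m-1},\varphi,\delta).
\]
Averaging over $|w|=n$, taking $(1/n)\log$ and $\limsup_{n\to\infty}$, and then letting the approximation parameters tend to zero via Theorem \ref{thm:open cover2}(viii) yields
\[
h_\mu(f_0,\ldots,f_{m-1},\eta)+\int\varphi\,d\mu\ \leq\ \log 2+P(f_0,\ldots,f_{m-1},\varphi),
\]
and taking suprema finishes the proof.

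\textbf{Main obstacle.} The delicate step is the passage from $\sum_P\mu(P)S_w\varphi(x(P))$ to $n\int\varphi\,d\mu$: it requires trading off the smallness of $\mu(B_0)$ against the uniform continuity modulus of $\varphi$ on the compact sets $B_i$, while simultaneously keeping $\delta=\min_{i\ne j}d(B_i,B_j)$ bounded below so that the separation estimate survives the limit. The $\log 2$ overhead is essentially unavoidable at this level of the argument, stemming precisely from the multiplicity-two covering $\{\overline{B_j}\}_{j=0}^{k}$; removing it in Theorem \ref{thm:relation1} presumably will require the product-system trick supplied by Theorems \ref{prop:entropy3} and \ref{prop:properties of pressure2}.
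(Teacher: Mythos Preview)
Your overall strategy is exactly the paper's (the Misiurewicz argument: approximate $\xi$ by $\eta=\{B_0,B_1,\dots,B_k\}$ with compact $B_1,\dots,B_k$, apply Lemma~\ref{lem:relation1} to the atoms of $\bigvee_{w'\le w}f_{w'}^{-1}\eta$, then control the exponential sum by a $2^n$ multiplicity count). However, you have mis-located where the genuine work lies, and as a result the key step has a gap.

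First, the passage from $\sum_P\mu(P)\,S_w\varphi(x(P))$ to $n\int\varphi\,d\mu$ is \emph{not} an obstacle: since $x(P)$ maximises $S_w\varphi$ on $\overline{P}\supseteq P$, one has directly
\[
\sum_P\mu(P)\,S_w\varphi(x(P))\ \ge\ \int S_w\varphi\,d\mu\ =\ n\int\varphi\,d\mu,
\]
and the inequality already points the right way. No smallness of $\operatorname{diam}(B_i)$ or of $\mu(B_0)$ is needed here.

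Second, your displayed bound $\sum_P e^{S_w\varphi(x(P))}\le 2^n P_w(f_0,\ldots,f_{m-1},\varphi,\delta)$ is where the uniform continuity actually enters, and as written it is not justified. Knowing that each \emph{point} lies in at most $2^n$ closures $\overline{P}$ does not let you ``quotient'' $\{x(P)\}$ into a $(w,\delta)$-separated set: the points $x(P)$ for distinct $P$ can be arbitrarily $d_w$-close. What one needs is the stronger fact that every $d_w$-ball of radius $\delta$ meets at most $2^n$ closures $\overline{P}$, which in turn requires $\delta<b/2$ (not $\delta=b$ as you set) so that an ordinary $\delta$-ball meets at most two of $\overline{B_0},\dots,\overline{B_k}$. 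With this in hand, take a maximal $(w,\delta)$-separated (hence $(w,\delta)$-spanning) set $E_w$, assign to each $P$ some $y(P)\in E_w$ with $d_w(x(P),y(P))\le\delta$, use uniform continuity of $\varphi$ at scale $\delta$ to get $S_w\varphi(x(P))\le S_w\varphi(y(P))+n\varepsilon$, and observe that each $y\in E_w$ is the image of at most $2^n$ atoms $P$. This yields
\[
\sum_P e^{S_w\varphi(x(P))}\ \le\ 2^n e^{n\varepsilon}\sum_{y\in E_w}e^{S_w\varphi(y)}\ \le\ 2^n e^{n\varepsilon}\,P_w(f_0,\ldots,f_{m-1},\varphi,\delta),
\]
and after averaging over $|w|=n$ (via the arithmetic--geometric mean, as the paper does), taking $\limsup_n\frac1n\log$, and finally letting $\varepsilon\to 0$, one obtains the claim. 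So the uniform-continuity bookkeeping belongs in this combinatorial step, not in the middle-term replacement you flagged.
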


\begin{proof}
We use the analogous method as that of Misiurewicz\cite{MISI}. Let $\mu \in \mathscr{M}(X,f_0,\ldots,f_{m-1})$ and let $\xi=\{A_1,\ldots,A_k\}$ be a finite partition of $(X,\mathscr{B}(X))$. For any $a >0$, choose $\varepsilon>0$ so that $ \varepsilon k\log k<a$. Since $\mu$ is regular, there exist compact sets $B_j \subset A_j,1 \leq j \leq k$ with $\mu(A_j\backslash B_j) \leq \varepsilon$. Let $\eta$ be the partition $\eta=\{B_0,B_1,\ldots,B_k\}$ where $B_0=X\backslash \cup_{j=1}^kB_j$. We have $\mu(B_0)<k\varepsilon$ and
\begin{align*}
 H_{\mu}(\xi | \eta)&=-\sum_{i=0}^k\sum_{j=1}^k\mu(B_i)\frac{\mu(B_i \cap A_j)}{\mu(B_i)}\log \frac{\mu(B_i \cap A_j)}{\mu(B_i)} \\&=-\mu(B_0)\sum_{j=1}^k\frac{\mu(B_0 \cap A_j)}{\mu(B_0)}\log \frac{\mu(B_0 \cap A_j)}{\mu(B_0)} \\& \leq \mu(B_0)\log k \\ & <k\varepsilon\log k<a.
\end{align*}
Let \[b=\min_{1 \leq i\neq j \leq k}d(B_i,B_j)>0.\]
Pick $\delta>0$ so that $\delta<b/2$ and so that $d(x,y)<\delta$ implies $|\varphi(x)-\varphi(y)|<\varepsilon$. Fix $w \in F_m^{+}, |w|=n$ and let $E_w$ be an $(w,\delta,f_0,\ldots,f_{m-1})$ separated set, which fails to be $(w,\delta,f_0,\ldots,f_{m-1})$ separated when any point is added. Then $E_w$ is also $(w,\delta,f_0,\ldots,f_{m-1})$ spanning. If $C \in \bigvee_{w' \leq w}f_{w'}^{-1}\eta$ let $\alpha_w(C)$ denote $\sup\{(S_wf)(x)|x \in C\}$. Then
\begin{align*}
 H_{\mu}(\bigvee_{w' \leq w}f_{w'}^{-1}\eta)+\int S_w\varphi d\mu & \leq \sum_{C \in \bigvee_{w' \leq w}f_{w'}^{-1}\eta}\mu(C)[-\log \mu(C)+\alpha(C)] \\ & \leq \log \sum_{C \in \bigvee_{w' \leq w}f_{w'}^{-1}\eta}e^{\alpha(C)} ~by~ Lemma~ \ref{lem:relation1}.
\end{align*}
Since  $\frac{1}{m^n}\sum_{|w|=n}\int S_w\varphi d\mu=n\int \varphi d\mu$, then
\begin{equation}\label{eq:rel}
\begin{aligned}
& \quad \frac{1}{m^n}\sum_{|w|=n}H_{\mu}(\bigvee_{w' \leq w}f_{w'}^{-1}\eta)+\frac{1}{m^n}\sum_{|w|=n}\int S_w\varphi d\mu\\ &= \frac{1}{m^n}\sum_{|w|=n}H_{\mu}(\bigvee_{w' \leq w}f_{w'}^{-1}\eta)+n\int \varphi d\mu \\& \leq \frac{1}{m^n}\sum_{|w|=n}\log \sum_{C \in \bigvee_{w' \leq w}f_{w'}^{-1}\eta}e^{\alpha_w(C)}.
\end{aligned}
\end{equation}

For each $C \in \bigvee_{w' \leq w}f_{w'}^{-1}\eta$ choose some $x \in \bar{C}$ so that $(S_w\varphi)(x)=\alpha_w(C)$. Since $E_w$ is $(w,\delta,f_0,\ldots,f_{m-1})$ spanning choose $y(C)\in E_w$ with $d(f_{w'}x,f_{w'}y(C)) \leq \delta$, $w' \leq w$. Then $\alpha_w(C) \leq (S_w\varphi)(y(C))+n\varepsilon$. Also each ball of radius $\delta$ meets the closures of at most two members of $\eta$ so if $y \in E_w$ then  $\{C \in \bigvee_{w' \leq w}f_{w'}^{-1}\eta|y(C)=y\}$ has cardinality at most $2^n$. Therefore
\[\sum_{C \in \bigvee_{w' \leq w}f_{w'}^{-1}\eta}e^{\alpha_w(C)-n\varepsilon}\leq \sum_{C \in \bigvee_{w' \leq w}f_{w'}^{-1}\eta}e^{(S_w\varphi)(y(C))} \leq 2^n\sum_{y \in E_w}e^{(S_w\varphi)(y)}.\]
and so
\begin{equation}\label{equ:rel2}
\log\left(\sum_{C \in \bigvee_{w' \leq w}f_{w'}^{-1}\eta}e^{\alpha_w(C)}\right)-n\varepsilon \leq n\log 2+\log\left(\sum_{y \in E_w}e^{(S_w\varphi)(y)}\right).
\end{equation}
By the arithmetic-geometric mean inequality, we have
\[\frac{1}{m^n}\sum_{|w|=n}\log\left(\sum_{y \in E_w}e^{(S_w\varphi)(y)}\right) \leq \log\left(\frac{1}{m^n}\sum_{|w|=n}\sum_{y \in E_w}e^{(S_w\varphi)(y)}\right).\]
Combining with (\ref{eq:rel}) and (\ref{equ:rel2}), we have
\begin{align*}
&\quad \frac{1}{n}\frac{1}{m^n}\sum_{|w|=n}H_{\mu}(\bigvee_{w' \leq w}f_{w'}^{-1}\eta)+\int \varphi d\mu \\ &\leq \frac{1}{n}\frac{1}{m^n}\sum_{|w|=n}\log \left(\sum_{C \in \bigvee_{w' \leq w}f_{w'}^{-1}\eta}e^{\alpha_w(C)}\right) \\ & \leq \varepsilon+ \log2+\frac{1}{n}\frac{1}{m^n}\sum_{|w|=n}\log\left(\sum_{y \in E_w}e^{(S_w\varphi)(y)}\right) \\ &\leq \varepsilon+\log2+ \frac{1}{n}\log\left(\frac{1}{m^n}\sum_{|w|=n}\sum_{y \in E_w}e^{(S_w\varphi)(y)}\right) \\ & \leq \varepsilon+\log2+\frac{1}{n}\log\left(\frac{1}{m^n}\sum_{|w|=n}P_w(f_0,\ldots,f_{m-1},\varphi,\delta)\right) \\ &=\varepsilon+\log2+\frac{1}{n}\log P_n(f_0,\ldots,f_{m-1},\varphi,\delta).
\end{align*}
Hence
\begin{align*}
  h_{\mu}(f_0,\ldots,f_{m-1},\eta)+\int \varphi d\mu &\leq \varepsilon+\log2+P(f_0,\ldots,f_{m-1},\varphi,\delta) \\ & \leq \varepsilon+\log2+P(f_0,\ldots,f_{m-1},\varphi).
\end{align*}
Now  $h_{\mu}(f_0,\ldots,f_{m-1},\xi) \leq h_{\mu}(f_0,\ldots,f_{m-1},\eta)+H_{\mu}(\xi | \eta)$ (Theorem \ref{thm:entropy1}$(\rmnum{4})$) so that \[ h_{\mu}(f_0,\ldots,f_{m-1},\xi)+\int \varphi d\mu \leq 2a+\log2+P(f_0,\ldots,f_{m-1},\varphi)\] and hence \[h_{\mu}(f_0,\ldots,f_{m-1})+\int\varphi d\mu \leq 2a+\log2+P(f_0,\ldots,f_{m-1},\varphi).\]
Since $a$ is chosen arbitrarily, we get the desired inequality \[h_{\mu}(f_0,\ldots,f_{m-1})+\int \varphi d\mu \leq \log2+P(f_0,\ldots,f_{m-1},\varphi)\] immediately.
\end{proof}

\textbf{Proof of the Theorem \ref{thm:relation1}.}

According to Lemma \ref{lem:relation2} we have \[h_{\mu}(f_0,\ldots,f_{m-1})+\int \varphi d\mu \leq \log2+P(f_0,\ldots,f_{m-1},\varphi),~~ \forall \mu \in \mathscr{M}(X,f_0,\ldots,f_{m-1}).\] Let $G=\{f_0,\ldots,f_{m-1}\}$, then for any $n \in \mathbb{N}$, we have
\begin{align*}
&\quad h_{\tiny{\overbrace{(\mu\times \mu)\cdots(\mu\times \mu)}^{2^n}}}(\overbrace{(G\times G)\cdots (G\times G)}^{2^n})+\int \overbrace{(\varphi\times\varphi) \cdots (\varphi\times \varphi)}^{2^n}d\overbrace{(\mu\times \mu)\cdots(\mu\times \mu)}^{2^n} \\ & \leq \log2+P(\overbrace{(G\times G)\cdots (G\times G)}^{2^n},\overbrace{(\varphi\times \varphi)\cdots(\varphi\times \varphi)}^{2^n}).
\end{align*}
According to Theorem \ref{prop:properties of pressure2}$(\rmnum{2})$ and Theorem \ref{prop:entropy3}$(\rmnum{2})$ we have
\[2^{n+1}h_{\mu}(G)+2^{n+1}\int \varphi d\mu \leq \log2+2^{n+1}P(G,\varphi).\]
Then \[h_{\mu}(G)+\int \varphi d\mu \leq \frac{1}{2^{n+1}}\log2+P(G,\varphi).\] Let $n \rightarrow \infty$, we get the desired inequality \[h_{\mu}(G)+\int \varphi d\mu \leq P(G,\varphi)\] immediately. \qed

\textbf{Open problem.} Can we get the variational principle for a free semigroup action?

\section{Entropy of a free semigroup action generated by affine transformations}\label{sec:affine}

Now we apply Theorem \ref{thm:relation1} to study the relationship between the Haar measure entropy and the topological entropy of a free semigroup action generated by affine transformations.

\begin{theorem}\label{thm:metr}
Let $X$ be a compact metrizable group, $A_0, \cdots, A_{m-1}$ surjective endomorphisms of $X$ and $a_0, \cdots, a_{m-1} \in X$. Let $\mu$ denote(normalised)Haar measure on $X$ and $d$ a left-invariant metric on $X$. Let $g_i = a_i \cdot A_i$ for any $0 \leq i \leq m-1$. Then
\begin{align*}
&\quad  \lim_{\varepsilon\to 0}\limsup_{n\rightarrow \infty}\left[\frac{1}{n} \left(\frac{1}{m^n} \sum_{|w|=n}\log \frac{1}{\mu(D_w(e, \varepsilon, A_0, \cdots, A_{m-1}))}\right)\right] \\ & \leq h_{\mu}(g_0, \cdots, g_{m-1}) \\& \leq \lim_{\varepsilon\to 0} \limsup_{n\to \infty} \left[\frac{1}{n} \log \left(\frac{1}{m^n} \sum_{|w|=n} \frac{1}{\mu(D_w(e, \varepsilon, A_0, \cdots, A_{m-1}))}\right)\right],
\end{align*}
where
\[D_w(e,\varepsilon , A_0, \cdots, A_{m-1}) = \bigcap_{w' \leq w} A_{w'}^{-1} (B_d(e, \epsilon)),\]
$e$ is the identity element of $X$ and $B_d(e, \varepsilon)$ is the open ball with center $e$ and radius $\varepsilon$ with respect to the metric $d$. Particularly, we have
\begin{align*}
&\quad  \lim_{\varepsilon\to 0}\limsup_{n\rightarrow \infty}\left[\frac{1}{n} \left(\frac{1}{m^n} \sum_{|w|=n}\log \frac{1}{\mu(D_w(e, \varepsilon, A_0, \cdots, A_{m-1}))}\right)\right] \\ & \leq h_{\mu}(A_0, \cdots, A_{m-1}) \\& \leq \lim_{\varepsilon\to 0} \limsup_{n\to \infty} \left[\frac{1}{n} \log \left(\frac{1}{m^n} \sum_{|w|=n} \frac{1}{\mu(D_w(e, \varepsilon, A_0, \cdots, A_{m-1}))}\right)\right].
\end{align*}
\end{theorem}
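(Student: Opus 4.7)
The plan is to use the group structure to reduce everything to computations involving the Haar measure of $D_w(e,\varepsilon,A_0,\ldots,A_{m-1})$, and then to apply the partial variational principle (Theorem \ref{thm:relation1}) for the upper bound together with a direct partition estimate for the lower bound. The key preliminary observation is that since $d$ is left-invariant and each $A_i$ is a homomorphism, a routine induction on $|w|$ gives $g_w(x)=c_w\cdot A_w(x)$ for some $c_w\in X$ depending only on $w$ and $a_0,\ldots,a_{m-1}$; hence $d(g_{w'}(x),g_{w'}(y))=d(A_{w'}(x),A_{w'}(y))$ for every $w'\le w$. Thus the Bowen metric $d_w$ for $(g_0,\ldots,g_{m-1})$ coincides with the one for $(A_0,\ldots,A_{m-1})$, and the counts $B(w,\varepsilon,\cdot), N(w,\varepsilon,\cdot)$ and the topological entropies agree for the two systems. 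Moreover, left-invariance and the homomorphism property yield $d(A_{w'}(x),A_{w'}(y))=d(e,A_{w'}(x^{-1}y))$, so the $d_w$-ball of radius $\varepsilon$ about $x$ is the left translate $B_w(x,\varepsilon)=x\cdot D_w(e,\varepsilon,A_0,\ldots,A_{m-1})$; by translation-invariance of Haar measure, $\mu(B_w(x,\varepsilon))=\mu(D_w(e,\varepsilon,A_0,\ldots,A_{m-1}))$.

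For the upper bound, note that Haar measure is invariant under translations and under surjective continuous endomorphisms of a compact group, hence $\mu\in\mathscr{M}(X,g_0,\ldots,g_{m-1})$. Theorem \ref{thm:relation1} with $\varphi\equiv 0$, combined with the metric identity above, yields $h_\mu(g_0,\ldots,g_{m-1})\le h(A_0,\ldots,A_{m-1})$. For each $w\in F_m^+$, if $E$ is a $(w,\varepsilon,A_0,\ldots,A_{m-1})$-separated set, then the balls $B_w(x,\varepsilon/2)$, $x\in E$, are pairwise disjoint, and summing their Haar measures produces
\[N(w,\varepsilon,A_0,\ldots,A_{m-1})\le \frac{1}{\mu(D_w(e,\varepsilon/2,A_0,\ldots,A_{m-1}))}.\]
Averaging over $|w|=n$, taking $\tfrac{1}{n}\log$ and then $\limsup_{n\to\infty}$ and $\lim_{\varepsilon\to 0}$, and using that the outer limit is insensitive to replacing $\varepsilon$ by $\varepsilon/2$, gives the asserted upper bound.

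For the lower bound, fix $\varepsilon>0$ and choose a finite Borel partition $\xi$ of $X$ with $diam(\xi)<\varepsilon$. For $|w|=n$, every element $C$ of $\xi_w:=\bigvee_{w'\le w}g_{w'}^{-1}\xi$ has $d_w$-diameter $<\varepsilon$ by the first paragraph, so $C\subseteq B_w(x,\varepsilon)$ for any $x\in C$, and consequently $\mu(C)\le \mu(D_w(e,\varepsilon,A_0,\ldots,A_{m-1}))$. Since $\sum_C\mu(C)=1$ and $-\log$ is decreasing,
\[H_\mu(\xi_w)=-\sum_C \mu(C)\log\mu(C)\ge \log\frac{1}{\mu(D_w(e,\varepsilon,A_0,\ldots,A_{m-1}))}.\]
Averaging over $|w|=n$, dividing by $n$, and taking $\limsup_{n\to\infty}$ yields
\[h_\mu(g_0,\ldots,g_{m-1},\xi)\ge \limsup_{n\to\infty}\frac{1}{n}\cdot\frac{1}{m^n}\sum_{|w|=n}\log\frac{1}{\mu(D_w(e,\varepsilon,A_0,\ldots,A_{m-1}))};\]
taking the supremum over such $\xi$ and letting $\varepsilon\to 0$ yields the lower bound. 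The particular statement for $h_\mu(A_0,\ldots,A_{m-1})$ is the case $a_0=\cdots=a_{m-1}=e$ of what has just been proved.

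The main obstacle is the bookkeeping in the first paragraph: verifying carefully that the affine translations $a_i$ drop out of the Bowen metric via $g_w=c_w\cdot A_w$ and left-invariance, and extracting the clean translation identity $B_w(x,\varepsilon)=x\cdot D_w(e,\varepsilon,A_0,\ldots,A_{m-1})$. Once these are in hand, everything else reduces to standard counting of separated sets and to the elementary entropy-of-partition inequality, glued together by Theorem \ref{thm:relation1}.
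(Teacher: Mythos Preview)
Your proof is correct and follows essentially the same architecture as the paper: the translation identity $B_w(x,\varepsilon)=x\cdot D_w(e,\varepsilon,A_0,\ldots,A_{m-1})$, the partition argument for the lower bound, and the partial variational principle (Theorem~\ref{thm:relation1}) for the upper bound.

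The one noteworthy difference is in how the upper bound is closed off. The paper does not prove the inequality
\[
h(A_0,\ldots,A_{m-1})\le \lim_{\varepsilon\to 0}\limsup_{n\to\infty}\frac{1}{n}\log\Bigl(\frac{1}{m^n}\sum_{|w|=n}\frac{1}{\mu(D_w(e,\varepsilon,A_0,\ldots,A_{m-1}))}\Bigr)
\]
directly; it simply quotes the equality $h(g_0,\ldots,g_{m-1})=h(A_0,\ldots,A_{m-1})=$ RHS from the external reference~\cite{WMAL}. You instead supply a short self-contained argument: disjointness of the half-radius Bowen balls around a $(w,\varepsilon)$-separated set forces $N(w,\varepsilon)\le 1/\mu(D_w(e,\varepsilon/2))$, and the $\varepsilon\mapsto\varepsilon/2$ shift is harmless in the outer limit. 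This makes your proof independent of~\cite{WMAL}, at the cost of obtaining only the inequality rather than the equality the paper cites; since only the inequality is needed for the stated theorem, nothing is lost. Similarly, where the paper invokes equation~(4.3) of~\cite{WMA} for the ball identity, you derive it directly from $g_w=c_w\cdot A_w$ and left-invariance; this is the same computation, just written out.
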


\begin{proof}
Obviously, $\mu \in \mathscr{M}(X,g_0,\cdots,g_{m-1})$. For any $\varepsilon >0$, $w = i_1 \cdots i_k \in F_m^+$ and $w' = i_l \cdots i_k$ where $1 \leq l \leq k$, from the equation (4.3) in the proof of Theorem 4.2 by Wang and Ma\cite{WMA}, we have
\[g_{w'}^{-1} B_d(g_{w'}(x), \varepsilon) = x \cdot A_{w'}^{-1} B_d(e, \varepsilon).\]
Then
\begin{align*}
D_w(x, \varepsilon, g_0, \cdots, g_{m-1}) & := \bigcap_{w' \leq w} g_{w'}^{-1} (B_d(g_{w'}(x), \varepsilon))\\
&=\bigcap_{w' \leq w} x \cdot A_{w'}^{-1} B_d(e, \varepsilon)\\
&= x \cdot  \bigcap_{w' \leq w} A_{w'}^{-1} B_d(e, \varepsilon)\\
&= x \cdot D_{w}(e, \varepsilon, A_0, \cdots , A_{m-1}).
\end{align*}
and \[\mu(D_w(x, \varepsilon, g_0, \cdots, g_{m-1}))=\mu(D_{w}(e, \varepsilon, A_0, \cdots , A_{m-1})).\]

Let $\varepsilon>0$ and $\xi=\{C_1,\cdots,C_k\}$ be a partition of $X$ into Borel sets of diameter less than $\varepsilon$. For $w \in F_m^{+}$, if $x \in \bigcap_{w'\leq w}g_{w'}^{-1}(C_{i_{w'}})$, then
\[\bigcap_{w'\leq w}g_{w'}^{-1}(C_{i_{w'}}) \subset x\cdot D_{w}(e, \varepsilon, A_0, \cdots , A_{m-1})\]
where $C_{i_{w'}} \in \xi$.  Since if $y \in \cap_{w'\leq w}g_{w'}^{-1}(C_{i_{w'}})$, then $g_{w'}(x),g_{w'}(y) \in C_{i_{w'}}$ and hence $y \in g_{w'}^{-1}B(g_{w'}(x),\varepsilon), \forall w'\leq w$, then
\begin{align*}
y \in \bigcap_{w'\leq w}g_{w'}^{-1}B(g_{w'}(x),\varepsilon)=D_w(x, \varepsilon, g_0, \cdots, g_{m-1})=x\cdot D_w(e, \varepsilon, g_0, \cdots, g_{m-1}).
\end{align*}
Thus $\mu(\bigcap_{w'\leq w}g_{w'}^{-1}(C_{i_{w'}})) \leq  \mu(D_{w}(e, \varepsilon, A_0, \cdots , A_{m-1}))$ and taking logarithms we see that
\begin{equation}\label{equ:meas}
\begin{aligned}
 &\quad \sum_{w'\leq w, i_{w'} \in \{1,\cdots,k\}}\mu(\bigcap_{w'\leq w}g_{w'}^{-1}(C_{i_{w'}}))\log\mu(\bigcap_{w'\leq w}g_{w'}^{-1}(C_{i_{w'}})) \\ & \leq \sum_{w'\leq w, i_{w'} \in \{1,\cdots,k\}}\mu(\bigcap_{w'\leq w}g_{w'}^{-1}(C_{i_{w'}}))\log\mu(D_{w}(e, \varepsilon, A_0, \cdots , A_{m-1})) \\
&=\log\mu(D_{w}(e, \varepsilon, A_0, \cdots , A_{m-1})) .
\end{aligned}
\end{equation}

Therefore
\begin{align*}
&\quad h_{\mu}(g_0, \cdots, g_{m-1}) \\& \geq h_{\mu}(g_0, \cdots, g_{m-1},\xi) \\
&=\lim_{n\rightarrow \infty}\frac{1}{n}\left[\frac{1}{m^n}\sum_{|w|=n}H_{\mu}(\bigvee_{w' \leq w}g_{w'}^{-1}\xi)\right] \\ & \geq \limsup_{n\rightarrow \infty}\left[\frac{1}{n} \left(\frac{1}{m^n} \sum_{|w|=n}\log \frac{1}{\mu(D_w(e, \varepsilon, A_0, \cdots, A_{m-1}))}\right)\right]~~ by~(\ref{equ:meas}).
\end{align*}
 Since $\varepsilon$ is arbitrary, we have
\[h_{\mu}(g_0, \cdots, g_{m-1}) \geq \lim_{\varepsilon\to 0}\limsup_{n\rightarrow \infty}\left[\frac{1}{n} \left(\frac{1}{m^n} \sum_{|w|=n}\log \frac{1}{\mu(D_w(e, \varepsilon, A_0, \cdots, A_{m-1}))}\right)\right].\]
By Wang, Ma and Lin\cite{WMAL}, we have
\begin{align*}
&\quad h(g_0, \cdots, g_{m-1})=h(A_0, \cdots, A_{m-1})\\&=\lim_{\epsilon\to 0} \limsup_{n\to \infty} \left[\frac{1}{n} \log \left(\frac{1}{m^n} \sum_{|w|=n} \frac{1}{\mu(D_w(e, \varepsilon, A_0, \cdots, A_{m-1}))}\right)\right],
\end{align*}
and by Theorem \ref{thm:relation1} we have \[h_{\mu}(g_0, \cdots, g_{m-1}) \leq h(g_0, \cdots, g_{m-1}).\]
Hence
\begin{align*}
&\quad  \lim_{\varepsilon\to 0}\limsup_{n\rightarrow \infty}\left[\frac{1}{n} \left(\frac{1}{m^n} \sum_{|w|=n}\log \frac{1}{\mu(D_w(e, \varepsilon, A_0, \cdots, A_{m-1}))}\right)\right] \\ & \leq h_{\mu}(g_0, \cdots, g_{m-1}) \\& \leq \lim_{\varepsilon\to 0} \limsup_{n\to \infty} \left[\frac{1}{n} \log \left(\frac{1}{m^n} \sum_{|w|=n} \frac{1}{\mu(D_w(e, \varepsilon, A_0, \cdots, A_{m-1}))}\right)\right].
\end{align*}
\end{proof}

\begin{remark}
In Theorem \ref{thm:metr}, if $m=1$, let $g=a\cdot A$, we have \[h_{\mu}(g)=h_{\mu}(A)=\lim_{\varepsilon\to 0}\limsup_{n\rightarrow \infty}\left[\frac{1}{n} \log\frac{1}{\mu(\bigcap_{i=0}^nA^{-i} B_d(e, \varepsilon))}\right]=h(A)=h(g),\] which had been proved by Bowen\cite{Bowen,WALTER2}.
\end{remark}

{\bf Acknowledgement.} The authors really appreciate the referees’ valuable remarks and suggestions that helped a lot. Dongkui Ma was supported by Guangdong Natural Science Foundation 2014A030313230 and "Fundamental Research Funds for the Central Universities" SCUT(2015ZZ055, 2015ZZ127). Dongkui Ma is the corresponding author.

\bibliographystyle{amsplain}

\end{document}